\documentclass[12pt]{amsart}
\usepackage[dvipsnames]{xcolor}
\usepackage{amsfonts,amssymb,amsmath,amscd,amstext}
\usepackage[colorlinks=true,linkcolor=teal,citecolor=purple]{hyperref}
\usepackage[utf8]{inputenc}
\usepackage{graphicx}
\usepackage{changes}
\usepackage{verbatim}
\usepackage[sort&compress]{natbib}
\usepackage{comment}
\usepackage{dsfont}
\usepackage{esint}
\usepackage[noabbrev,capitalize,nameinlink]{cleveref}
\usepackage{mathdots}
\usepackage[a4paper,top=2.3cm,bottom=2.3cm,left=2cm,right=2cm]{geometry}
\renewcommand{\leq}{\leqslant}
\renewcommand{\geq}{\geqslant}
\renewcommand{\le}{\leqslant}

\newcommand{\hh}{{\mathbb{H}}}

\newcommand{\eps}{\varepsilon}

\newcommand{\G}{\mathbb G}
\newcommand{\R}{\mathbb R}

\definechangesauthor[color=red]{J}
\definecolor{champagne}{rgb}{0.97, 0.91, 0.81}

\definecolor{asparagus}{rgb}{0.53, 0.66, 0.42}

\DeclareMathOperator{\spann}{span}

\newtheorem{theorem}{Theorem}[section]
\newtheorem{proposition}[theorem]{Proposition}
\newtheorem{lemma}[theorem]{Lemma}
\newtheorem{corollary}[theorem]{Corollary}

\theoremstyle{definition}
\newtheorem{definition}[theorem]{Definition}

\newtheorem{remark}[theorem]{Remark}
\newtheorem{example}[theorem]{Example}

\newcommand{\V}{{\mathbb{V}}}

\theoremstyle{remark}

\numberwithin{equation}{section}

\definechangesauthor[color=purple]{S}
\definechangesauthor[color=red]{P}

\title{Characterizations of Sobolev and BV functions on Carnot groups} 

\author{Francesco Serra Cassano}
\address{Dipartimento di Matematica, Università degli Studi di Trento, Via Sommarive, 14, 38123 Povo (Trento), Italia}
\email{francesco.serracassano@unitn.it}
\author{Kilian Zambanini}
\address{Dipartimento di Matematica, Università degli Studi di Trento, Via Sommarive, 14, 38123 Povo (Trento), Italia}
\email{kilian.zambanini@unitn.it}

\begin{document}\begin{abstract}
    We establish two characterizations of real-valued Sobolev and BV functions on Carnot groups. The first is obtained via a nonlocal approximation of the distributional horizontal gradient, while the second is based  on an $L^p$ Taylor approximation, in the spirit of the results by Bourgain, Brezis and Mironescu \cite{BBM}. 
\end{abstract}
\maketitle

\section{Introduction} 
The main purpose of this paper is to provide two characterizations of Sobolev and bounded variation functions $f:\G\to\R$ defined on a Carnot group $\G$ (see Definitions \ref{horSobspace} and \ref{bvG}). 
The first characterization is obtained by approximating the so-called \emph{weak horizontal gradient} $\nabla_\G f$ (see Section 2 for its definition) by means of suitable \emph{nonlocal horizontal gradients} (see Theorem \ref{firstcharacterization} below). The second is based on an approximation of $f$ by means of a first-order horizontal Taylor polynomial (see Theorem \ref{secondcharBVSobfunct} below).

The first characterization is in the spirit of the celebrated paper by Bourgain, Brezis, and Mironescu \cite{BBM}. In particular, we adopt the approach recently developed in the Euclidean setting by Brezis and Mironescu \cite{brezis} to recover a well-known result due to Mengesha and Spector (see \cite{mengesha} and the references therein for a comprehensive account of this topic).

The second characterization extends to Carnot groups an interesting result of Spector, who, in the Euclidean setting, related the notion of $L^p$-differentiability introduced by Calderón and Zygmund \cite{calderon61} with characterizations of Sobolev spaces via the asymptotic behaviour of nonlocal functionals from \cite{BBM}. This led to a characterization of Sobolev and bounded variation functions in terms of a first-order Taylor approximation (see \cite{spector,Spector16} and the references therein).

One of the aims of the present work is to provide, as far as possible, a unified approach to the characterization of Sobolev and bounded variation function spaces within the framework of the theory of Bourgain, Brezis, and Mironescu (commonly referred to as the BBM formula), a broad topic that has been extensively studied by many authors, while focusing in particular on the setting of Carnot groups. These structures provide a significant class of metric measure spaces where techniques from Euclidean or Riemannian geometry cannot always be  applied directly.

The definition and main properties of Carnot groups are collected in Section 2. We briefly recall that a Carnot group $\G$ is a simply connected nilpotent Lie group that can be represented as $\G \equiv (\R^n, \cdot)$ and is equipped with:
\begin{itemize}
\item a family of intrinsic dilations $(\delta_\lambda)_{\lambda>0} : \G \to \G$, which are automorphisms of $\G$;
\item a subbundle $H\G$ of the tangent bundle $T\G$, called the horizontal subbundle, which generates the whole $T\G$ via commutators;
\item a homogeneous norm $N:\G\to [0,\infty)$, which induces a distance $d$ on $\G$ that is left-invariant and homogeneous with respect to the intrinsic dilations.
\end{itemize}
The Haar measure of $\G$ coincides with the $n$-dimensional Lebesgue measure $\mathcal L^n$ on $\R^n$, and the metric (Hausdorff) dimension of $(\G,d)$ coincides with its homogeneous dimension $Q$. If $\G$ is commutative, then $\G \equiv \R^n$ as a Euclidean vector space, otherwise $Q > n$. In particular, the homogeneous dimension $Q$ plays the same role as the topological dimension $n$ in the Euclidean setting.

Let us now introduce the relevant approximating (nonlocal) functionals considered throughout the paper. Let $(\rho_\eps)_\eps$ be a family of functions on $\G$ (which we call \textit{mollifiers}) satisfying the following properties, which are standard assumptions introduced in \cite{BBM} by Bourgain, Brezis and Mironescu:
\begin{equation}\label{P1}\tag{P1}
\rho_\eps(x)\geq 0,\quad \rho_\eps\in L^1(\G);
\end{equation}
\begin{equation}\label{P2}\tag{P2}
\rho_\eps(x)=\tilde\rho_\eps(N(x))\quad\text{for some Borel function }\tilde \rho_\eps:[0,+\infty)\to[0,+\infty),
\end{equation}
and we refer to $\tilde\rho_\eps$ as the \emph{profile} of $\rho_\eps$;
\begin{equation}\label{P3}\tag{P3}
\int_\G \rho_\eps(h)\,dh=1\quad\text{for every }\eps>0;
\end{equation}
\begin{equation}\label{P4}\tag{P4}
\lim_{\eps\to 0}\int_{\G\setminus B(0,\delta)}\rho_\eps (h)\,dh=0\quad\text{for all } \delta>0,
\end{equation}
where $B(x,r)=B_N(x,r)$ denotes the open ball with respect to the distance $d$ induced by $N$.

Following Brezis and Mironescu \cite{brezis}, given a sequence $(\rho_\varepsilon)_\varepsilon$ as above, we define, for any $f\in L^1_{\rm loc}(\G)$, the associated \emph{nonlocal horizontal gradients} by
\begin{equation}\label{Veps}
V_\varepsilon(f)(x):= Q\int_\G \frac{f(x\cdot h)-f(x)}{N(h)}\,\nabla_\G N(h)\,\rho_\varepsilon(h)\,dh,
\end{equation}
for every $x\in\G$ for which the above integral is well defined.
We also introduce the functional
\begin{equation}\label{tildeVeps}
\widetilde V_\varepsilon(f)(x):=\int_\G\frac{|f(x\cdot h)-f(x)|}{N(h)}\,|\nabla_\G N(h)|\,\rho_\varepsilon(h)\,dh,
\end{equation}
which is well defined for every $x\in\G$ (possibly taking the value $+\infty$).

Two other relevant functionals studied here are the energy-type functionals naturally associated with $V_\eps(f)$. For $1\leq p<+\infty$ and $\eps>0$, we set
\begin{equation}\label{nonlocal energies}
I_{\eps, p}(f):=\int_\G\int_\G\frac{|f(x\cdot h)-f(x)|^p}{N(h)^p}|\nabla_\G N(h)|^p\,\rho_\eps(h)\,dh\,dx,
\end{equation}
\begin{equation}\label{IepspBarb}
I^*_{\eps, p}(f):=\int_\G\int_\G\frac{|f(x\cdot h)-f(x)|^p}{N(h)^p}\,\rho_\eps(h)\,dh\,dx.
\end{equation}

Nonlocal energies of the form \eqref{IepspBarb} have been extensively studied in the literature. In their seminal paper \cite{BBM}, Bourgain, Brezis, and Mironescu considered the Euclidean setting, namely the case in which $\mathbb{G}=\mathbb{R}^n$ and $N$ denotes the standard Euclidean norm. 
They proved that the quantities $I^*_{\varepsilon,p}(f)$ are uniformly bounded whenever $f \in W^{1,p}(\Omega)$ or $f \in BV(\Omega)$ (with $p=1$ in the latter case) and that, in addition, the converse implication also holds. Moreover, 
they established the convergence
\[
I^*_{\eps,p}(f)\to C_{n,p}\|\nabla f\|^p_{L^p(\Omega)} \quad \text{as } \eps\to 0,
\]
for every $f \in W^{1,p}(\Omega)$, where $C_{n,p}$ is a positive constant depending only on the dimension $n$ and on the exponent $p$. Later, Dávila \cite[Theorem 1]{davila} extended this result to functions of bounded variation. 
In the Euclidean case, the horizontal gradient $\nabla_\G\equiv\nabla$ and
\begin{equation}\label{eikonaleq}
|\nabla_\G N(x)|=1 \quad \text{for each } x\in\G\setminus\{0\}.
\end{equation}
If \eqref{eikonaleq} holds, then the functionals $I_{\eps,p}$ and $I^*_{\eps,p}$ coincide. More generally, the eikonal equation \eqref{eikonaleq} is satisfied almost everywhere in a general Carnot group $\G$ for the norm $N_c$ induced by the Carnot--Carathéodory distance (see Remark \ref{eikeqccnor}). However, in general, \eqref{eikonaleq} may fail for a homogeneous norm $N$ on a Carnot group $\G$, and the functionals $I^*_{\eps,p}$ and $I_{\eps,p}$ need not be equivalent (see Remark~\ref{comparIIstar}). 

Several generalizations of these results in Euclidean spaces have been obtained; see, for instance, \cite{ABBF1,ABBF2,Ponce2004,LeoniSpector1,LeoniSpector2,ludwig,NguPinSquaVe,brezis-nguyen,brezis-nguyen2,brezis-nguyen3,brezis-nguyen4,PSV19,brezis-nguyen5,BVSY,BSVSY1,BSVSY2,Ponce,CS1,CS2,BCCS, LMP, nguyensquassina, bonicatto,BSY, bourgainnguyen, nguyen1,nguyen2, nguyen3} and the references therein.

In the setting of Carnot groups, related results have also been obtained; see, for instance, Barbieri \cite{barbieri}, Garofalo--Tralli \cite{GT,GT2}, Maalaoui--Pinamonti \cite{MP}, Zhang--Zhu \cite{tongJie} and the references therein.
 In particular, under the additional assumption of $N$ being invariant under horizontal rotations, Barbieri \cite[Proposition 3.5 and Theorem 3.6]{barbieri} established an analogous characterization for Sobolev functions in $W^{1,p}_\G(\G)$, $1<p<+\infty$, and proved that 
\begin{equation}\label{barbieribbm}
I^*_{\eps,p}(f)\to C_{Q,p}\|\nabla_\G f\|^p_{L^p(\Omega)} \quad \text{as } \eps\to 0
\end{equation}
where $C_{Q,p}$ is a positive constant depending only on $Q,p$ and the chosen homogeneous norm $N$.
More recently, results in this spirit have also been established in metric measure spaces; see, e.g., Brena--Pasqualetto--Pinamonti \cite{BPP}, Di Marino--Squassina \cite{DMS19}, Górny \cite{Gorny}, Bang-Xian--Xu--Zhu \cite{HXZ25}, Lahti--Pinamonti--Zhou \cite{lathi,LPZ}, Munnier \cite{Munnier}, Shimizu \cite{shimizu} and the references therein.

As is known in the Euclidean case (\cite[Theorem 1.1]{mengesha} or \cite[Proposition 1.7]{brezis}), we show (Corollary \ref{approximation}) that a similar convergence result holds for the nonlocal gradients, namely
\[
V_\eps(f)\to \nabla_\G f \quad \text{in } L^p(\G) \quad \text{if } f\in W^{1,p}_\G(\G).
\]
The analogous statement holds for maps in $BV_\G(\G)$ (see \cite[Theorem 1.2]{mengesha} or \cite[Proposition 1.8]{brezis} in the Euclidean case). We stress that this result, in contrast to that of Barbieri, does not require the rotational invariance hypothesis on the norm. 
Moreover, being proved for a general homogeneous norm $N$, it appears to be new even in the Euclidean case (see Remark~\ref{gradeuclidapprox}). 
{On the other hand, the validity of a similar convergence result for arbitrary norms in Euclidean spaces is due to Ludwig \cite{ludwig} in the case of the nonlocal energies $I_{\eps, p}^*(f)$. We recover this fact as a consequence of Theorem \ref{secondcharBVSobfunct} (see Corollary \ref{thmLudwig}).}
\medskip

We are now in a position to state our first characterization of Sobolev and BV functions on Carnot groups, which involves the functionals $V_\eps$, $\widetilde V_\eps$, $I_{\eps,p}$, and $I^*_{\eps,p}$.

\begin{theorem}[Characterization of Sobolev and BV functions by nonlocal gradients]\label{firstcharacterization}\phantom{}\\
Let $\G$ be a Carnot group endowed with a homogeneous norm $N$. Let $(\rho_\varepsilon)_\varepsilon$ be a sequence of mollifiers satisfying assumptions $\eqref{P1}\div\eqref{P4}$, and let $f\in L^p(\G)$, with $1\le p<\infty$.
\begin{itemize}
\item[(i)] If $1<p<\infty$, then
\begin{equation}\label{firstchari1}
f\in W^{1,p}_\G(\G)\Longleftrightarrow \limsup_{\eps\to 0}I_{\eps,p}(f)<+\infty;
\end{equation}
\begin{equation}\label{firstchari1bis}
f\in W^{1,p}_\G(\G)\Longleftrightarrow \limsup_{\eps\to 0}I^*_{\eps,p}(f)<+\infty;
\end{equation}
\begin{equation}\label{firstchari2}
f\in W^{1,p}_\G(\G)\Longleftrightarrow \limsup_{\eps\to 0}\|\widetilde V_\eps(f)\|_{L^p(\G)}<+\infty.
\end{equation}

In these cases, $V_\eps(f)$ is well defined and
\[
V_\eps(f)\to \nabla_\G f \quad \text{in } L^p(\G)\quad \text{as } \eps\to 0.
\]

\item[(ii)] If $p=1$, then
\begin{equation}\label{firstchari1BV}
f\in BV_\G(\G)\Longleftrightarrow \limsup_{\eps\to 0}I_{\eps,1}(f)<+\infty;
\end{equation}
\begin{equation}\label{firstchari1BVbis}
f\in BV_\G(\G)\Longleftrightarrow \limsup_{\eps\to 0}I^*_{\eps,1}(f)<+\infty;
\end{equation}
\begin{equation}\label{firstchari2BV}
f\in BV_\G(\G)\Longleftrightarrow \limsup_{\eps\to 0}\|\widetilde V_\eps(f)\|_{L^1(\G)}<+\infty.
\end{equation}

In this case, $V_\eps(f)$ is well defined and
\[
V_\eps(f)\rightharpoonup D_\G f \quad \text{in } \mathcal M(\G,\R^{m_1}) \quad \text{as } \eps\to 0.
\]
\end{itemize}
\end{theorem}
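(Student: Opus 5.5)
The plan is to prove all the forward implications through one upper bound, and all the converse implications through a duality argument for $V_\eps(f)$. The key identity is an integration by parts for $V_\eps$ against test functions. Fix $\varphi\in C_c^\infty(\G,\R^{m_1})$. Using left-invariance of Haar measure, the change of variables $y=x\cdot h$, and the behaviour of $\nabla_\G N$ under inversion $h\mapsto h^{-1}$ (recall $N(h^{-1})=N(h)$), we aim to show
\[
\int_\G V_\eps(f)\cdot\varphi\,dx=-\int_\G f(x)\,V^*_\eps(\varphi)(x)\,dx.
\]
Here $V^*_\eps(\varphi)$ is the analogous nonlocal operator applied to $\varphi$, with divergence-type structure. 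For smooth $\varphi$ we then show $V^*_\eps(\varphi)\to{\rm div}_\G\varphi$ uniformly, with supports in a fixed compact set, up to a small tail. This uses the first-order horizontal Taylor expansion
\[
\varphi(x\cdot h)-\varphi(x)=\langle\nabla_\G\varphi(x),\pi_1(h)\rangle+O(N(h)^2),
\]
together with (P3)--(P4). The crucial computation is the moment identity
\[
Q\int_\G \frac{\pi_1(h)_i}{N(h)}\,X_jN(h)\,\rho_\eps(h)\,dh=\delta_{ij}.
\]
I would obtain it by writing $\rho_\eps$ in polar coordinates (P2) on $\G$. On each sphere $\{N=r\}$ one integrates by parts the function $h\mapsto h_i X_jN$: the homogeneity of $N$ and the fact that the $X_j$ are divergence free give $\int_{B(0,r)}X_j(h_i)\,dh=\delta_{ij}|B(0,r)|$, and the coarea/Euler identity $\mathcal{L}^n(B(0,r))=r^Q|B(0,1)|$ supplies the factor $Q$. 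This step needs no rotational invariance, which is why the factor $\nabla_\G N$ appears in the definition of $V_\eps$.

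For the forward implications (necessity), assume $f\in W^{1,p}_\G$ (or $BV_\G$). I would first treat smooth $f$ and write
\[
f(x\cdot h)-f(x)=\int_0^1\frac{d}{dt}f\big(x\cdot\gamma_h(t)\big)\,dt
\]
along a horizontal curve $\gamma_h$ of length at most $C\,N(h)$. Jensen's inequality and left-invariance then give
\[
\int_\G|f(x\cdot h)-f(x)|^p\,dx\le C\,N(h)^p\,\|\nabla_\G f\|_p^p .
\]
Together with $|\nabla_\G N|\le C$ (by homogeneity of degree 0) and (P3), this bounds $I_{\eps,p}$, $I^*_{\eps,p}$ and $\|\widetilde V_\eps f\|_p^p\le I_{\eps,p}$ (Jensen with the probability measure $\rho_\eps$) uniformly in $\eps$. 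The bound passes to general $f$ by density of smooth functions in $W^{1,p}_\G$, and in $BV_\G$ by strict approximation together with Fatou's lemma.

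For the converse implications (sufficiency), assume $\limsup_\eps\|\widetilde V_\eps f\|_p<\infty$. This assumption is implied by bounds on $I_{\eps,p}$ (Jensen), and also by bounds on $I^*_{\eps,p}$ since $|\nabla_\G N|$ is bounded. Then $|V_\eps f|\le Q\,\widetilde V_\eps f$, so $V_\eps f$ is well defined and bounded in $L^p$. By the identity above,
\[
\Big|\int_\G f\,{\rm div}_\G\varphi\,dx\Big|=\lim_\eps\Big|\int_\G V_\eps(f)\cdot\varphi\,dx\Big|\le C\,\|\varphi\|_{p'} .
\]
Hence $f\in W^{1,p}_\G$ when $p>1$ by Riesz representation, and $f\in BV_\G$ when $p=1$. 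The same duality identifies every weak limit of $V_\eps(f)$ as $\nabla_\G f$ (respectively $D_\G f$). This gives weak convergence of the whole family, hence weak-$*$ convergence of measures when $p=1$.

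To upgrade to strong $L^p$ convergence when $p>1$, I would argue by density. For $g\in C_c^\infty$ the Taylor expansion and the moment identity give $V_\eps g\to\nabla_\G g$ in $L^p$. Moreover, the forward bound yields $\|V_\eps(f-g)\|_p\le C\,\|\nabla_\G(f-g)\|_p$ uniformly in $\eps$, and an $\varepsilon/3$ argument concludes.

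The main obstacle I expect is the moment identity in a general Carnot group with an arbitrary homogeneous norm, in particular making polar coordinates and the boundary integration rigorous when $N$ is merely homogeneous and not smooth. The fallback is to argue via $\int\rho_\eps(h)\,h_i\,X_jN(h)/N(h)\,dh$ with the layer-cake representation $\rho_\eps=\int_0^\infty \mathbf 1_{\{\tilde\rho_\eps>s\}}\,ds$, which reduces everything to integrals over balls $B(0,r)$ of $X_j$ applied to $h_i\,\Phi(N(h))$, where integration by parts is clean.
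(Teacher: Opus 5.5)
Your proposal is correct. The necessity part is the paper's (geodesic estimate, density, strict approximation plus Fatou, as in Lemma \ref{thirdestimate}). Sufficiency and the convergence $V_\eps(f)\to\nabla_\G f$ follow a genuinely different route.

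\textbf{The paper's route.} It first replaces $\rho_\eps$ by truncated kernels $\bar\rho_\eps$ satisfying \eqref{P5}, whose $\widetilde V_\eps(f)$ is bounded by four times the original one. This makes $K_\eps$ Lipschitz, so $V_\eps(f)=-\int_\G f(x\cdot h)\nabla_\G K_\eps(h)\,dh$ is defined for every $f\in L^1_{\rm loc}$. The derivative is then moved onto the test function, and the limit is identified because $(K_\eps)_\eps$ is an approximation to the identity (Lemma \ref{converseimplication}). The convergence statement comes from the Brezis--Mironescu representation $V_\eps(f)=\nabla_\G f\ast K_\eps$ (Theorem \ref{convolution}). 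That representation costs a mollification of $f$ with a maximal-function domination and an $L^1$ approximation of the profiles.

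\textbf{Your route.} You replace all of this by one scalar computation, the moment identity $Q\int_\G h_iX_jN(h)N(h)^{-1}\rho_\eps(h)\,dh=\delta_{ij}$. This is exactly \eqref{IIeps=v} in the proof of Theorem \ref{converse2}, recorded as \eqref{formulaforv}. You combine it with Taylor expansion of smooth test functions, the uniform bound of Lemma \ref{thirdestimate}, and density of $C^\infty_c$. This works directly with the original $\rho_\eps$, assuming only $\widetilde V_\eps(f)\in L^p$. It needs no \eqref{P5} truncation, no maximal functions and no Sobolev--Poincar\'e inequality, and it is close in spirit to the alternative proof the paper mentions after \eqref{formulaforv}.

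\textbf{What each buys.} The paper's route additionally gives the sharp bound $\|V_\eps(f)\|_{L^p}\le\|\nabla_\G f\|_{L^p}$, the convergence $\|V_\eps(f)\|_{L^1}\to|D_\G f|(\G)$ for $BV_\G$ functions, and a.e.\ convergence. Your density argument, with the non-sharp constant $C(N)$, gives none of these, but none is required by the theorem.

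\textbf{Two points need care.}

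First, do not use any symmetry of $\nabla_\G N$ under $h\mapsto h^{-1}$. In a non-abelian group $\nabla_\G N(h^{-1})\neq-\nabla_\G N(h)$ in general; check it on $\hh^1$ with the Cygan--Kor\'anyi norm. So the Euclidean-looking operator built with $\varphi(y\cdot h)-\varphi(y)$ is not the adjoint, although it has the same limit. Instead, substitute $x=y\cdot h^{-1}$ for fixed $h$ only. This gives
\[
V_\eps^*(\varphi)(y)=Q\int_\G\frac{\bigl(\varphi(y)-\varphi(y\cdot h^{-1})\bigr)\cdot\nabla_\G N(h)}{N(h)}\,\rho_\eps(h)\,dh .
\]
Keeping the difference together is also what justifies Fubini, since $\rho_\eps/N$ need not be integrable near $0$; this is why the paper passes to \eqref{P5}. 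Absolute integrability follows from $\widetilde V_\eps(f)\in L^p$ on one side. On the other side it follows from $|\varphi(y)-\varphi(y\cdot h^{-1})|\lesssim\min\{N(h),1\}$, with support near ${\rm spt}\,\varphi$ when $N(h)\le 1$.

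Second, in the moment identity, the layer-cake decomposition of a non-monotone profile does not produce balls, but you do not need it. The map $h\mapsto h_iX_jN(h)/N(h)$ is $0$-homogeneous, so by polar coordinates the integral equals $\int_\G\rho_\eps=1$ times a constant independent of the profile. That constant follows from $\int_\G X_j\bigl(h_i\,\Phi(N(h))\bigr)\,dh=0$ for a single Lipschitz compactly supported $\Phi$, e.g.\ $\Phi(t)=(1-t)^+$, using that $X_j$ is divergence free and $X_jh_i=\delta_{ij}$.
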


Equivalences \eqref{firstchari1bis} and \eqref{firstchari1BVbis} were recently proved by Lahti, Pinamonti, and Zhou \cite[Theorem 1.1]{lathi} in abstract metric measure spaces, which include Carnot groups, under an additional growth condition on the mollifiers. Our approach does not require any further assumptions on the family $(\rho_\eps)_\eps$.

The first key ingredient in the proof of the previous characterization is a representation formula for the nonlocal gradient $V_\varepsilon(f)$, valid for functions $f\in W_\G^{1,p}(\G)$ or $f\in BV_\G(\G)$ (see Theorem \ref{convolution}). This representation formula extends a deep result of Brezis and Mironescu \cite{brezis} from the Euclidean setting to Carnot groups. It allows us to exploit techniques from harmonic analysis on Carnot groups (see, for instance, \cite{folland,stein}), which naturally extend those from the Euclidean setting.

The second key ingredient is a careful approximation of the mollifiers $(\rho_\eps)_\eps$ by a family $(\bar\rho_\eps)_\eps$ satisfying, in addition to properties \eqref{P1}$\div$\eqref{P4}, the property of being compactly supported in $\G\setminus\{0\}$. This ensures that the nonlocal gradients corresponding to $\bar\rho_\eps$ are immediately well-defined for any $f\in L^1_{\rm loc}(\G)$, and the characterization can be obtained by estimating only the nonlocal gradient $V_\eps(f)$ instead of the potentially larger $\widetilde V_\eps(f)$ (see \eqref{P5} and Corollary \ref{characterization}).

The second main result of the paper concerns the characterization of a function $f\in W^{1,p}_\G(\G)$ (with $1\le p<\infty$), or $f\in BV_\G(\G)$, in terms of the notion of $L^p$-differentiability in the sense of Calderón and Zygmund (see Definition \ref{Lpdiff}). It is well known that, in the Euclidean setting, $L^p$-differentiability almost everywhere is only a necessary condition for $f$ to belong to $W^{1,p}(\R^n)$. A natural question is how to strengthen the notion of $L^p$-differentiability in order to obtain a characterization of Sobolev spaces (see \cite{Spector16} and the references therein for a comprehensive discussion).

To this end, our characterization reads as follows in the setting of Carnot groups.

\begin{theorem}[Characterization of Sobolev and $BV$ functions by $L^p$-Taylor approximation]\label{secondcharBVSobfunct}
Let $\G$ be a Carnot group equipped with a homogeneous norm $N$. Let $(\rho_\varepsilon)_\varepsilon$ be a sequence of mollifiers satisfying assumptions $\eqref{P1}\div\eqref{P4}$, and let $f\in L^p(\G)$, with $1\le p<\infty$. Then
\[
\begin{split}
f\in W^{1,p}_\G(\G)
\Longleftrightarrow\;
& f \text{ satisfies a first-order $L^p$-Taylor approximation with respect to } (\rho_\eps)_\eps, \\
& \text{that is, }
\lim_{\eps\to 0}\int_\G\int_\G
\frac{|f(x\cdot h)-f(x)-\langle v(x),\pi_x(h)\rangle_x|^p}{N(h)^p}
\rho_\eps(h)\,dh\,dx=0
\end{split}
\]
for some section $v\in L^p(\G,H\G)$. In this case, $v=\nabla_\G f$ almost everywhere.

If $p=1$, then in addition
\[
f\in BV_\G(\G)
\Longleftrightarrow
\limsup_{\eps\to 0}\int_\G\int_\G
\frac{|f(x\cdot h)-f(x)-\langle v(x),\pi_x(h)\rangle_x|}{N(h)}
\rho_\eps(h)\,dh\,dx<+\infty
\]
for some (and hence any) section $v\in L^1(\G,H\G)$.
\end{theorem}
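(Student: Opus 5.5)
The plan is to work with the linear remainder operator
\[
R_\eps(f,v)(x,h):=\frac{f(x\cdot h)-f(x)-\langle v(x),\pi_x(h)\rangle_x}{N(h)} ,
\]
seen as a map into $L^p(\G\times\G,\rho_\eps(h)\,dh\,dx)$, and to reduce everything to Theorem \ref{firstcharacterization} together with two homogeneity facts. I will use that $\langle v(x),\pi_x(h)\rangle_x=\langle v(x),h_1\rangle$, where $h_1$ is the first-layer component of $h$. The first fact is that $|h_1|\le C N(h)$, so
\[
\frac{|\langle v(x),\pi_x(h)\rangle_x|}{N(h)}\le C|v(x)|,
\]
and hence the $v$-part of $R_\eps$ has $L^p$ norm at most $C\|v\|_{L^p}$, uniformly in $\eps$ by \eqref{P3}. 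The second fact comes from polar coordinates on $\G$ with respect to $N$ and the radial property \eqref{P2}. For $w\in\R^{m_1}$,
\[
\int_\G\frac{|\langle w,h_1\rangle|^p}{N(h)^p}\rho_\eps(h)\,dh=\int_{\{N=1\}}|\langle w,\sigma_1\rangle|^p\,d\sigma=:\Phi_p(w),
\]
which is independent of $\eps$ and comparable to $|w|^p$. I will also need the companion vector identity
\[
Q\int_\G\frac{\langle w,h_1\rangle}{N(h)}\nabla_\G N(h)\rho_\eps(h)\,dh=w,
\]
which says that $V_\eps$ reproduces the gradient of linear horizontal functions. I expect this identity to be available from the proof of Theorem \ref{firstcharacterization}, or to follow from polar coordinates and the divergence theorem on the unit ball. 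It is the main technical point I would need to verify.

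\emph{Direct implication for $W^{1,p}_\G$.} By Theorem \ref{firstcharacterization}, $I^*_{\eps,p}(f)\le C\|\nabla_\G f\|_p^p$ for $f\in W^{1,p}_\G(\G)$, with $C$ independent of $\eps$ (this uniform bound is the content of its proof). Combined with the first homogeneity fact, $f\mapsto R_\eps(f,\nabla_\G f)$ is a linear operator $W^{1,p}_\G\to L^p$ with norm bounded uniformly in $\eps$. It is therefore enough to show $R_\eps\to0$ on the dense class $C^\infty_c(\G)$. For such $f$ with support $K$, I split the $h$-integral at $N(h)=\delta$.
\begin{itemize}
\item For $N(h)<\delta\le1$, the stratified Taylor formula gives $|f(x\cdot h)-f(x)-\langle\nabla_\G f(x),h_1\rangle|\le C N(h)^2$. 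The higher-layer components satisfy $|h_j|\le N(h)^j$ for $j\ge2$, so they are already $O(N(h)^2)$. Moreover $x$ is confined to the compact set $K\cdot \overline{B(0,1)}^{-1}$. This part is therefore at most $C\delta^p$.
\item For $N(h)\ge\delta$, the integrand is bounded by $C(\delta^{-p}\|f\|_p^p+\|\nabla_\G f\|_p^p)$ times $\int_{N\ge\delta}\rho_\eps$, which tends to $0$ by \eqref{P4}.
\end{itemize}
Letting $\eps\to0$ and then $\delta\to0$ gives the claim.

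\emph{Converse and identification of $v$.} Suppose $\limsup_\eps\|R_\eps(f,v)\|_p<\infty$ for some $v\in L^p$. The triangle inequality and the first fact give $\limsup_\eps I^*_{\eps,p}(f)<\infty$, so \eqref{firstchari1bis} (respectively \eqref{firstchari1BVbis}) yields $f\in W^{1,p}_\G$ for $p>1$ (respectively $f\in BV_\G$ for $p=1$).
\begin{itemize}
\item For $p>1$, set $w=v-\nabla_\G f$. Subtracting the direct statement gives $\int_\G\Phi_p(w(x))\,dx\to0$. This quantity does not depend on $\eps$, so $w=0$ a.e.
\item For $p=1$ with the remainder tending to $0$, I cannot yet use $\nabla_\G f$. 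Instead I use $|\nabla_\G N|\le C$ and the vector identity: the remainder bound shows $V_\eps(f)-v\to0$ in $L^1$. Theorem \ref{firstcharacterization}(ii) gives $V_\eps(f)\rightharpoonup D_\G f$, hence $D_\G f=v\,\mathcal L^n$. Thus $f\in W^{1,1}_\G$ and $v=\nabla_\G f$.
\item For the BV statement, if $f\in BV_\G$ then $I^*_{\eps,1}(f)$ is bounded by \eqref{firstchari1BVbis}. Adding the bounded $v$-term shows the $\limsup$ is finite for every $v\in L^1$, which gives "some, hence any".
\end{itemize}

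The steps I expect to need most care are the uniform-in-$\eps$ bound $I^*_{\eps,p}\le C\|\nabla_\G f\|^p$, including for $p=1$ in $W^{1,1}$, and the polar-coordinate identities on a general homogeneous norm. If the vector identity is not directly available, I would first try to prove it by computing $V_\eps$ on a smooth compactly supported $f$ that is linear on a large ball and letting the ball grow. Failing that, I would fall back on the scalar identity $\Phi_1$ via duality with test fields.
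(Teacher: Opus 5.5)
Your proposal is correct. The converse directions follow the paper's route.

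\begin{itemize}
\item For $p>1$, you bound $I^*_{\eps,p}(f)$ using $|h_1|\le CN(h)$, invoke \eqref{firstchari1bis}, and identify $v$ through the $\eps$-independent polar-coordinate integral. This is Theorem \ref{converse}.
\item For $p=1$, you first obtain $f\in BV_\G(\G)$, then show $V_\eps(f)-v\to 0$ in $L^1$, and compare with $V_\eps(f)\rightharpoonup D_\G f$. This is Theorem \ref{converse2}.
\end{itemize}

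The vector identity you flag is proved in exactly your divergence-theorem spirit. The paper approximates $\rho_\eps$ by $\rho_{\eps,j}\in{\rm Lip}_{\G,c}(\G\setminus\{0\})$ and integrates by parts against $K_{\eps,j}$. It then uses $\nabla_\G\langle w,h_1\rangle=w$ and $\int K_{\eps,j}\to 1$. Your $\Phi_p$ is also missing a harmless normalization factor $\sigma(S_\G)^{-1}$, coming from \eqref{tildeP3}.

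The genuine difference is the direct implication $W^{1,p}_\G\Rightarrow$ Taylor approximation.

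\emph{The paper's route} (Theorems \ref{TaylorBV}, \ref{Taylormollgen}). It applies the geodesic fundamental theorem of calculus to the mollified $f_\delta$ minus a linear correction. Letting $\delta\to0$, this bounds the remainder by $N(h)\int_0^1|\nabla_\G f(x\cdot\gamma_h(t))-\nabla_\G f(x)|\,dt$. It concludes with $L^p$-continuity of translations of $\nabla_\G f$ and \eqref{P4}.

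\emph{Your route.} You use the uniform-in-$\eps$ bound from Lemma \ref{thirdestimate}, so that $f\mapsto R_\eps(f,\nabla_\G f)$ is equibounded. You then check convergence on $C^\infty_c(\G)$ via a second-order stratified Taylor estimate. This requires density of $C^\infty_c(\G)$ in $W^{1,p}_\G(\G)$. That density is standard (Theorem \ref{meyersserrin} plus cutoffs $\chi(\delta_{1/R}\,\cdot)$), but you should state it, since the paper only records density of $C^\infty\cap W^{1,p}_\G$.

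\emph{What each buys.}
\begin{itemize}
\item Your argument is more modular and elementary.
\item The paper's argument never needs norm density. It therefore also gives the sharper Theorem \ref{TaylorBV}: $\limsup\le C|D^s_\G f|(\G)$ with $v=\nabla^{\rm ac}_\G f$. A density argument cannot reach this, because smooth functions are not norm-dense in $BV_\G$.
\item For the BV equivalence as actually stated, that refinement is unnecessary. Your observation that $I^*_{\eps,1}(f)\le C|D_\G f|(\G)$ plus the $v$-term bounded by $C\|v\|_{L^1}$ gives finiteness for every $v\in L^1(\G,H\G)$ is sufficient, and it handles the ``some (and hence any)'' clause directly.
\end{itemize}
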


The previous characterization was obtained in the Euclidean case $\G=\R^n$ by Spector \cite[Theorems 1.4 and 1.5]{Spector16} for mollifiers of the form $\rho_\eps(x)=\frac{1}{|B(0,\eps)|}\chi_{B(0,\eps)}(x)$ (see Example \ref{Examples}), and later extended to general mollifiers satisfying \eqref{P1}$\div$\eqref{P4} by Ponce and Spector \cite[Theorem 1.4 and Remark 1.5]{Ponce} (see also Brezis and Nguyen \cite[Proposition 1]{brezis-nguyen}).

The proof of Theorem \ref{secondcharBVSobfunct} proceeds as follows. We first establish in Theorem \ref{TaylorBV} the implication
\[
f\in BV_\G(\G)\Longrightarrow
\limsup_{\varepsilon\to 0}
\int_\G\int_\G
\frac{|f(x\cdot h)-f(x)-\langle v(x),\pi_x(h)\rangle_x|}{N(h)}
\rho_\varepsilon(h)\,dh\,dx
\le C\,|D_\G^{\rm s}f|(\G),
\]
with $v=\nabla^{\rm ac}_\G f$, following the approach of \cite[Theorem 1.4]{Ponce}. Here $\nabla^{\rm ac}_\G f$ and $D_\G^{\rm s}f$ denote, respectively, the density of the absolutely continuous part and the singular part in the decomposition of the horizontal derivative measure $D_\G f$ with respect to the $n$-dimensional Lebesgue measure (see \eqref{decompDGf}).

Moreover, this implication can be refined for functions $f\in W^{1,p}_\G(\G)$ (Theorem \ref{Taylormollgen}) by taking $v=\nabla_\G f$, showing that $f$ admits an $L^p$-Taylor approximation.

Conversely, if $f\in L^p(\G)$ with $1<p<\infty$ admits an $L^p$-Taylor approximation, then $f\in W^{1,p}_\G(\G)$; this is proved in Theorem \ref{converse}, which relies on Theorem \ref{firstcharacterization}. The more delicate case $p=1$ is addressed in Theorem \ref{converse2} and exploits the nonlocal approximation of the horizontal gradient. The proof of Theorem \ref{converse2} also yields the corresponding result for functions in $BV_\G(\G)$ (see Theorem \ref{converse2bv}).\\\\
We conclude by briefly outlining the structure of the paper.

In Section 2 we collect preliminary notions and results that will be needed throughout the paper. In particular, we recall some algebraic and metric properties of Carnot groups, the intrinsic differential calculus between Carnot groups, and the intrinsic notion of convolution between functions and measures in this setting, together with some of its properties.

Section 3 is devoted to the study of the nonlocal gradient $V_\eps(f)$ and of the associated functionals $\widetilde V_\eps(f)$, $I_{\eps,p}(f)$, and $I^*_{\eps,p}(f)$, which will lead to the proof of Theorem \ref{firstcharacterization}.

In Section 4 we recall Calder\'on--Zygmund differentiability in Carnot groups and related results. We prove Theorem \ref{secondcharBVSobfunct} on $L^p$-Taylor approximations and discuss some consequences.


\medskip

{\bf Acknowledgments.} The authors are members of the {\it Gruppo Nazionale per l’Analisi Matematica, la Probabilità e le loro Applicazioni} (GNAMPA), of the {\it Istituto Nazionale di Alta Matematica} (INdAM), and they are partially funded by the European Union under NextGenerationEU. PRIN 2022 Prot. n. 2022F4F2LH, the INdAM-GNAMPA 2025 Project \emph{Structure of sub-Riemannian hypersurfaces in Heisenberg groups}, CUP ES324001950001 and the INdAM-GNAMPA 2026 Project \emph{Variational, Geometric, and Analytic Perspectives on Regularity}, CUP E53C25002010001. The authors would like to thank V. Magnani for several useful suggestions concerning section 4 and   A. Pinamonti for very helpeful discussions on the topics of the present paper.

\section{Preliminaries}
\subsection{Carnot groups} In this section we recall basic and general facts on the structure of Carnot groups. General sources are for instance \cite{lanconelli}, \cite{folland}, \cite{heinonen95}, \cite{Pansu}, \cite{SerraCassano} and the references therein\vspace{-0.25 cm}.\linebreak\\
A \textit{Carnot group} $\G$ \textit{of step} $k$ is a simply connected Lie group equipped with a step $k$ \textit{stratification} of its Lie algebra $\mathfrak g$, namely there exist linear subspaces $V_1,\dots,V_k$ of $\mathfrak g$ such that
\[\mathfrak g=V_1\oplus\cdots\oplus V_k,\quad [V_1,V_i]=V_{i+1}\quad\text{for all }i=1,\dots,k-1,\quad V_k\neq \{0\}, \quad [V_1,V_k]=\{0\}.\]
We let $m_i:=\dim(V_i)$, $h_i=\sum_{j=1}^i m_j$ for $i=1,\dots,k$ and we fix an \textit{adapted basis} of $\mathfrak g$, i.e. a basis $X_1,\dots, X_n$ with the property that
$X_{h_{i-1}+1},\dots ,X_{h_i}$ is a basis of $V_i$ for every $i=1,\dots k$.
We will refer to $X_1,\dots, X_{m_1}$ as \textit{generating vector fields} of the group, while the subspace $V_1=\spann\{{X_1,\dots, X_{m_1}}\}$ is called the \textit{horizontal layer}.\medskip\\
The \textit{exponential map} is a global diffeomorphism
from $\mathfrak g$ onto $\G$: in particular, we can write any point $x\in\G$ in a unique way as $x = \exp(x_1X_1+\dots+ x_nX_n)$.
Using these exponential coordinates, we identify $x$ with the $n$-tuple $(x_1,\dots ,x_n)\in\R^n$ and we
can set $\G\equiv (\R^n,\cdot)$, where the explicit expression of the group operation $\cdot$ is given by the Campbell-Hausdorff formula (\cite{folland75}). We only recall here that $0\in\R^n$ corresponds to the identity element $e\in\G$ and that the inverse of $x$ is simply $-x$; some elementary properties of the group operation and the structure of vector fields in exponential coordinates are collected in \cite{FSSC03}.\medskip\\
The subbundle of the tangent bundle spanned by the generating vector fields of the group is called \textit{horizontal bundle} (or \textit{horizontal distribution}) and will be denoted by $H\G$. Explicitly, its fibers are given by 
\[H\G_x=\spann\{X_1(x),\dots,X_{m_1}(x)\}.\vspace{0.1 cm}\]
We define a \textit{sub-Riemannian structure} on $\G$, endowing each fiber of $H\G$ with a scalar product $\langle\cdot,\cdot\rangle_x$ which makes the basis $X_1(x),\dots ,X_{m_1}(x)$ orthonormal. We denote by $|\cdot|_x$ the norm induced on $H\G_x$.
\medskip\\
A vector $v\in H\G_x$ is called \textit{horizontal}, while the sections of $H\G$ are called \textit{horizontal vector fields}. Hence, horizontal vector fields can be expressed, point by point, as a linear combination of $X_1(x),\dots, X_{m_1}(x)$. This means that a horizontal section $X$ can always be identified with a function $\phi=(\phi_1,\dots,\phi_{m_1}):
\G\to \R^{m_1}$, simply by setting $ X(x)=\sum_{i=1}^{m_1}
\phi_i(x)X_i(x).$ In the sequel, we shall often use this identification, through which we can also write $|\phi(x)|=|\phi(x)|_x$.\medskip\\
For any $\lambda>0$ we define the \textit{intrinsic dilation} of a factor $\lambda$ as $\delta_ \lambda:\G\equiv \R^n\to\G\equiv \R^n$,
\[\delta_\lambda(x_1,\dots,x_n):=(\lambda^{\alpha_1}x_1,\dots,\lambda^{\alpha_n}x_n),\]
where $\alpha_i=j$ if $h_{j-1}+1\leq i\leq h_j$. Intrinsic dilations are automorphisms of the group $\G$.\medskip\\
A curve $\gamma:[0,T]\to\G$ is called \textit{sub-unit} (or
\textit{admissible}) if it is absolutely continuous (as a map from $[0,T]$ to $\R^n$) and if there exist real measurable functions $c_1(s),\dots,c_{m_1}(s)$, $s\in [0,T]$, such that $\sum_{i=1}^{m_1}c_j^2(s)\leq  1$ for a.e $s\in[0,T]$ and
\[\gamma'(s)=\sum_{j=1}^{m_1} c_j(s)X_j(\gamma(s))\quad \text{ for a.e. } s\in[0,T]. \]
If this is the case, \vspace{-0.1 cm}the \textit{speed} of the curve $\gamma$ at time $s$ is  $|\gamma'(s)|_{\gamma(s)}=\left(\sum_{j=1}^{m_1}c_j^2(s)\right)^{1/2}$. If we drop the requirement $\sum_{i=1}^{m_1}c_j^2(s)\leq  1$, then $\gamma$ is called \textit{horizontal} and its speed can be larger than 1. 
\medskip\\
If $x,y\in \G$, we define their \textit{Carnot-Carathéodory (CC) distance} $d_c(x,y)$ as
\[d_c(x,y)=\inf\{T>0: \exists \text{ a sub-unit curve $\gamma:[0,T]\to\G$ with $\gamma(0)=x$ and $\gamma(T)=y$}\}.\]
By Chow's theorem (see \cite{lanconelli}), the set of subunit curves joining $x$ and $y$ is non-empty. As a consequence, $d_c$ is a distance on $\G$ which induces the \textit{Euclidean topology}. In addition, $d_c$ is left-invariant and 1-homogeneous, namely
\[d_c(z\cdot x, z\cdot y)=d_c(x,y)\quad \text{and}\quad d_c(\delta_\lambda(x),\delta_\lambda(y))=\lambda\, d_c(x,y)\quad \forall\, x,y,z\in\G,\,\lambda>0.\vspace{0.1 cm}\] 
Any two points $x,y\in\G$ can actually be connected by a \textit{geodesic} $\gamma:[0,1]\to\G$, which is a horizontal curve having total variation (in the metric sense) equal to $d_c(x,y)$ (see \cite[Theorem 1.4.4]{monti}).\medskip\\
In general, if $d_1$ and $d_2$ are two \textit{invariant} distances on $\G$ (i.e. left-invariant and 1-homogeneous), then they are bilipschitz equivalent, i.e. there exist positive constants $C_1,C_2$ such that
\begin{equation}\label{equivalencenorms}C_1d_2(x,y)\leq d_1(x,y)\leq C_2d_2(x,y)\quad\text{for any $x,y\in\G$}.\end{equation}
The Euclidean distance $d_E$ is not invariant (unless $\G$ is a Euclidean space). Neverthless, for every compact set $A$ there exists a positive constant $C_A$ such that 
\[C_A^{-1}d_E(x,y)\leq  d_c(x,y)\leq C_A d_E(x,y)^{1/k}\quad\text{for all }x,y\in A.\]
Every invariant distance $d$ induces a \textit{homogeneous norm} $N_d=\|\cdot\|_d$ on $\G$, simply setting $\|x\|_d:=d(x,e)$, where $e$ is the identity of $\G$. Conversely, if $N$ is a homogeneous norm on $\G$ (namely $N(x)=0$ iff $x=e$, $N(x)=N(x^{-1})$, $N(\delta_\lambda(x))=\lambda N(x)$ and $N(x\cdot y)\leq N(x)+N(y)$), then it induces an invariant distance $d_N$ by setting $d_N(x,y):=N(x^{-1}y)$ for every $x,y\in\G$. \medskip\\ A map $\varphi:\G\to\G$ is called a \textit{horizontal rotation} if there exists a matrix $A\in O(m_1)$ such that \[\varphi(x_1,\dots,x_n)=(A(x_1,\dots,x_{m_1}),x_{m_1+1},\dots, x_n)\,,\]
where  $O(m_1)$ denotes the orthogonal group of matrices in  $\R^{m_1}$.
A homogeneous norm $N$ is called \textit{invariant under horizontal rotations} if $N(\varphi(x))=N(x)$ for any horizontal rotation $\varphi$ and $x\in\G$. Notice that, if $\G$ is a Carnot group, there exists always a homogeneous norm $N$ which is invariant under horizontal rotations (see \cite[Theorem 5.1]{FSSC03}).
\medskip\\ In the following, we fix a homogeneous norm $N$ on $\G$ (or equivalently, an invariant distance $d$).\linebreak We will denote by $B(x,r)=B_N(x,r)$ the open ball computed with respect to $N$. When considering the Carnot-Carathéodory distance $d_c$, we will denote by $\|\cdot\|_c$ and by $B_c(x,r)$ the corresponding homogeneous norm and the associated balls. Moreover, we will also use the notation $B(r):=B(e,r)$ and $B_c(r):=B_c(e,r)$.\medskip\\
The integer $Q$ defined as
\[Q:=\sum_{i=1}^k i\dim(V_i)\]
is called the \textit{homogeneous dimension} of $\G$ and plays a fundamental role in this theory. The measure $\mathcal L^n$ on $\R^n\equiv \G$ is the \textit{Haar measure} of the group, it is both left- and right-invariant and is $Q$-homogeneous with respect to intrinsic dilations:
\[\mathcal L^n(x\cdot E)=\mathcal L^n(E)=\mathcal L^n(E\cdot x),\quad\mathcal L^n(\delta_\lambda(E))=\lambda^Q\mathcal L^n(E)\quad\text{ for any }x\in\G,\lambda>0.\]
As a consequence, if $d$ is any invariant distance on $\G$, then the space $(\G,d,\mathcal L^n)$ is \textit{Ahlfors $Q$-regular} and the metric dimension of $\G$ is $Q$.
More precisely, it holds that \[\mathcal L^n(B_N(x,r))=C_N\,r^Q\]for any $x\in\G, r>0$ and any homogeneous norm $N$ on $\G$. We will also denote by $|E|$ the Lebesgue measure of a set $E\subset\G$.
\medskip


\subsection{Differential calculus on Carnot groups}\phantom{}\vspace{0.1 cm}\\
In this section, we denote by $\G_1$ and $\G_2$ two Carnot groups, equipped with associated intrinsic dilations $\delta_\lambda^1$ and $\delta_\lambda^2$. We also fix two homogeneous norms $\|\cdot\|_1$, $\|\cdot \|_2$ on $\G_1$ and $\G_2$, respectively.

\begin{definition}[H-linear maps]\label{Pdiff}
  We say that 
  $L:\G_1\to\G_2$ is
   \emph{H-linear}, or
  it is a \emph {homogeneous homomorphism}, if $L$ is a group
   homomorphism  such that 
$$
   L(\delta^1_\lambda x)=\delta^2_\lambda L(x), \quad \text{for all  }x\in \G_1 \text{ and } \lambda>0.
$$
\end{definition}
In the real-valued case $\G_2= \R$, there exists an easy characterization of H-linear maps. For $x\in\G_1$, we denote by $\pi_x:\G_1\to H\G_1$ the smooth section defined by 
\[\pi_x(y):=\sum_{i=1}^{m_1}y_iX_i(x).\]
\begin{proposition}\cite[Proposition 2.5]{FSSC03}\label{Hlinear}
    A map $L:\G\to \R$ is H-linear if and only if there is $a=(a_1,\dots,a_{m_1})\in\R^{m_1}$ such that \[L(y)=\sum_{i=1}^{m_1}a_iy_i.\] Equivalently, for any $x\in\G$ we can write 
$L(y)=\langle a,\pi_x(y)\rangle_x$, where we are using the identification $\R^{m_1}\equiv H\G_x$ by setting $a=\sum_{i=1}^{m_1}a_iX_i(x)\in H\G_x$.
\end{proposition}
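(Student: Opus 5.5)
The statement to prove is Proposition \ref{Hlinear}: H-linear maps $L:\G\to\R$ are exactly the maps $y\mapsto\sum_{i\le m_1}a_iy_i$. I would prove the two implications separately, working in exponential coordinates. The key structural input is the Baker--Campbell--Hausdorff formula. In exponential coordinates adapted to the stratification it says that
\[
(x\cdot y)_i=x_i+y_i+Q_i(x,y),
\]
where $Q_i$ is a polynomial that depends only on the coordinates of layers strictly lower than that of $i$. In particular $Q_i\equiv 0$ for $i\le m_1$, since commutators land in $V_2\oplus\cdots\oplus V_k$.

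\emph{Sufficiency.} Let $L(y)=\sum_{i\le m_1}a_iy_i$.
\begin{itemize}
\item $L$ is a homomorphism: the first $m_1$ coordinates of $x\cdot y$ are $x_i+y_i$, so $L(x\cdot y)=L(x)+L(y)$.
\item $L$ commutes with dilations: $(\delta_\lambda y)_i=\lambda y_i$ for $i\le m_1$, so $L(\delta_\lambda y)=\lambda L(y)$. On $\R$ the dilations are $t\mapsto\lambda t$.
\end{itemize}
The equivalent formulation via $\langle a,\pi_x(y)\rangle_x$ follows because $X_1(x),\dots,X_{m_1}(x)$ is orthonormal for $\langle\cdot,\cdot\rangle_x$. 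Hence $\langle a,\pi_x(y)\rangle_x=\sum_i a_iy_i$.

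\emph{Necessity.} Let $L$ be H-linear.
\begin{itemize}
\item \emph{Regularity.} A homomorphism from $\G$ to $(\R,+)$ that is homogeneous is bounded on the compact unit sphere $\{N=1\}$ only if we know some regularity. I would rather avoid this by using that a group homomorphism between Lie groups that is measurable (or continuous) is smooth. However, the definition gives no continuity. So I will argue algebraically instead, using only additivity and homogeneity.
\item \emph{$L$ restricted to a one-parameter subgroup.} For $v\in\mathfrak g$, the map $t\mapsto L(\exp(tv))$ is additive in $t$, because $\exp(tv)\exp(sv)=\exp((t+s)v)$.
\item \emph{Vertical directions.} For $v\in V_j$ we have $\delta_\lambda\exp(tv)=\exp(\lambda^j tv)$. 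Homogeneity then gives $\lambda L(\exp v)=L(\exp(\lambda^j v))$. Additivity combined with rational $\lambda$ gives $L(\exp(\lambda^jv))=\lambda^jL(\exp v)$ for $\lambda\in\mathbb Q_{>0}$. Comparing the two, when $j\ge2$ we get $L(\exp v)=0$. Thus $L$ vanishes on $\exp(V_j)$ for every $j\ge2$.
\item \emph{Horizontal directions.} On $\exp(V_1)$, additivity in the horizontal coordinates comes from the BCH form of the first layer above. Homogeneity for all real $\lambda>0$, together with $L(x^{-1})=-L(x)$, gives real linearity: $L(\exp\sum_{i\le m_1}y_iX_i)=\sum_i a_iy_i$ with $a_i:=L(\exp X_i)$.
\item \emph{Conclusion.} Every $y\in\G$ factors as $y=\exp(\pi(y))\cdot z$, where $\pi(y)$ is the horizontal part of $y$ and $z$ lies in the normal subgroup $\exp(V_2\oplus\cdots\oplus V_k)$; this again uses the BCH triangular structure. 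The subgroup $\exp(V_2\oplus\cdots\oplus V_k)$ is generated by the $\exp(V_j)$, $j\ge2$, on which $L$ vanishes, so $L(z)=0$. Therefore $L(y)=\sum_{i\le m_1}a_iy_i$.
\end{itemize}

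\textbf{Main obstacle.} The delicate step is the vertical vanishing without any continuity assumption. There I would exploit genuine real homogeneity: $L(\exp(sv))=s^{1/j}L(\exp v)$ holds for all $s>0$. Together with additivity in $s$, this forces $L(\exp v)=0$ whenever $j\ge2$, because $s\mapsto s^{1/j}$ is not additive.
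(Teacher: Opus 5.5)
Your proof is correct. The paper gives no argument for this proposition; it is quoted from \cite[Proposition 2.5]{FSSC03}. The natural comparison is therefore with the standard Lie-theoretic proof. There $L$ is taken to be a continuous homomorphism, so its differential $dL:\mathfrak g\to\R$ is a Lie algebra homomorphism and vanishes on $[\mathfrak g,\mathfrak g]=V_2\oplus\cdots\oplus V_k$. Hence $L(\exp v)=dL(v)$ sees only the horizontal component, and homogeneity comes for free.

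Your route is purely algebraic and uses real homogeneity in place of continuity. Comparing $\phi(2^j)=2\phi(1)$ (from dilations) with $\phi(2^j)=2^j\phi(1)$ (from additivity of $\phi(t)=L(\exp tv)$) kills every layer $V_j$, $j\ge 2$. Homogeneity for all real $\lambda>0$ then upgrades additivity on $V_1$ to $\R$-linearity. This gives the statement for arbitrary $L$ with no measurability assumption, which is exactly what Definition \ref{Pdiff} requires. Homogeneity is genuinely needed here: a $\mathbb Q$-linear but not $\R$-linear map composed with the horizontal projection is a homomorphism not of the stated form. So the ``main obstacle'' you flag at the end is already settled by your third bullet.

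One point of order needs fixing. Additivity of $\ell(u):=L(\exp u)$ on $V_1$ does not follow from the first-layer group law alone, since $\exp u\cdot\exp w\neq\exp(u+w)$. You first need that $L$ depends only on the horizontal coordinates. So move your last bullet before the horizontal one:
\begin{enumerate}
\item Write $y=\exp(\pi(y))\cdot z$ with $z\in\exp(V_2\oplus\cdots\oplus V_k)$.
\item Note that $\exp(V_2\oplus\cdots\oplus V_k)=\exp(V_2)\cdots\exp(V_k)$, by solving layer by layer with the triangular form of the group law. Hence $L(z)=0$ and $L(y)=\ell(\pi(y))$.
\item Then $\ell(\pi(x)+\pi(y))=L(x\cdot y)=\ell(\pi(x))+\ell(\pi(y))$ gives additivity directly, and homogeneity finishes as you describe.
\end{enumerate}
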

The fundamental notion of pointwise differentiability for functions acting between Carnot groups is due to Pansu \cite{Pansu}.
\begin{definition}[Pansu differentiability]\label{pansudifferentiability} Let $\Omega\subset\G_1$ be open.
We say that $f:\Omega\to\G_2$ is \emph {P-differentiable} at $x\in\Omega$ if
  there is a $H$-linear function $L:\G_1\rightarrow\G_2$ such that
\begin{equation*}
  \|\left(L(x^{-1}\cdot
  y)\right)^{-1}\cdot f(x)^{-1}\cdot f(y)\|_2= o\big( \| x^{-1}\cdot y\|_1\big),\quad  \text {as $\|x^{-1}\cdot y\|_1\to 0$}.
\end{equation*}
The $H$-linear map $L$, also denoted by $D_Pf(x)$, is called  the \emph{Pansu's differential} of $f$ at $x$.
\end{definition}
Notice that, by \eqref{equivalencenorms}, the property of being P-differentiable at a point $x\in\Omega$ does not depend on the chosen homogeneous norms on $\G_1$ and $\G_2$. If $\G_2\equiv\R$, we will naturally consider $\|\cdot\|_2=|\cdot|$.
In this latter case, in view of Proposition \ref{Hlinear}, $f:\Omega\to\R$ is P-differentiable at $x\in\Omega$ if there exists $a_x\in\R^{m_1}$ 
such that
\[\lim_{y\to x}\frac{|f(y)-f(x)- \langle a_x,\pi_x(x^{-1}y)\rangle_x|}{d_1(x,y)}=0\]
and in particular it holds $D_Pf(x)(z)=\langle a_x,\pi_x(z)\rangle_x$.\medskip\\
If $E\subset\G_1$, we denote by Lip$\,(E,\G_2)$ the space of all Lipschitz continuous maps $f:E\to\G_2$. Since any two invariant distances on a Carnot group are bilipschitz equivalent, this space does not depend on the fixed homogeneous norms on $\G_1$ and $\G_2$. If $\G_2=\R$, we usually consider $\G_1=\G$ and we use the notation $f\in\text{Lip}_\G(E)$. Finally, if $f$ has also compact support in an open set $\Omega$, we write $f\in\text{Lip}_{\G,c}(\Omega)$.  The fundamental result where P-differentiability applies is the following generalization of the Rademacher theorem.
\begin{theorem}[Pansu-Rademacher Theorem \cite{Pansu}]\label{pansuthm}
    Let $f\in \emph{Lip}(\Omega,\G_2)$, where $\Omega\subset\G_1$ is an open set. Then $f$ is P-differentiable $\mathcal L^n$-a.e. in $\Omega$. 
\end{theorem}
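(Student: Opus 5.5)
The statement is Pansu's Rademacher theorem, which the paper cites rather than proves. I would follow Pansu's original strategy: reduce to differentiability along horizontal directions, then upgrade this to full P-differentiability using the stratification.

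\textbf{Step 1: directional differentiability along horizontal lines.} Fix a horizontal direction $v\in V_1$ and consider the curves $t\mapsto x\cdot\exp(tv)$. These are integral curves of a left-invariant horizontal vector field, and each has finite length for any invariant distance. Hence $t\mapsto f(x\exp(tv))$ is Lipschitz from $\R$ into $\G_2$. For $\G_2=\R$, or in general after projecting to the abelianization of $\G_2$ and using the horizontality of the Lipschitz curve, the classical one-dimensional Rademacher theorem gives differentiability a.e. along each such line. Since the Haar measure is $\mathcal L^n$ and Fubini applies in exponential coordinates along the flow of $\exp(tv)$, the directional derivative $D_vf(x)$ exists for a.e. $x$. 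The set of differentiability points is Borel. Taking $v$ in a countable dense subset of $V_1$ gives a full-measure set on which all these derivatives exist simultaneously.

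\textbf{Step 2: linearity in $v$ at a.e. point.} On this set, I would show that $v\mapsto D_vf(x)$ extends to an H-linear map $L_x:\G_1\to\G_2$. The Lipschitz bound gives uniform control in $v$, so the derivative extends to all of $V_1$. Additivity holds at a.e. point: for $\G_2=\R$, it follows by testing against smooth compactly supported functions and using the distributional identity $X_vf=\sum v_iX_if$. For general $\G_2$, I would use a density-point argument, approximating the concatenation of horizontal segments in directions $v$ and $w$ by the segment in direction $v+w$ up to errors controlled by higher brackets. Once $L_x$ is defined on $V_1$, it extends uniquely to a homogeneous homomorphism, because $V_1$ generates $\mathfrak g_1$ and the derivative map is compatible with brackets. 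This extension is the Lie-algebraic form of Proposition \ref{Hlinear} in the real case.

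\textbf{Step 3: from directional to full differentiability, the main obstacle.} Fix a point $x$ that is a Lebesgue (density) point of the good set and at which the difference quotients converge uniformly in $v\in V_1$ with $|v|\le 1$; Egorov's theorem gives such points a.e. Let $y$ be close to $x$ with $r=d(x,y)$. I would use the fact, quantified via the ball-box theorem and Chow's theorem, that $x^{-1}y$ can be written as a product of a bounded number $M$ of horizontal segments $\exp(t_jv_j)$ with $|t_j|\lesssim r$. Since $x$ is a density point, each intermediate point can be moved to a nearby good point at distance $o(r)$, at a cost of $o(r)$ by the Lipschitz bound. Applying the uniform directional expansion on each segment and summing gives $f(x)^{-1}f(y)=L_x(x^{-1}y)\cdot o(r)$. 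The essential difficulty is making this argument uniform and dealing with the noncommutativity in $\G_2$; the bounded-factor decomposition coming from the stratification is what makes it work.
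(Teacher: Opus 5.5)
\textbf{Verdict: the plan works for $\G_2=\R$, but Step 2 has a genuine gap for non-abelian $\G_2$, and Step 3 relies on it.}

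The paper gives no proof of this theorem. It is cited from Pansu and only ever applied to real-valued maps such as $N$.

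\textbf{Real-valued case.} For $\G_2=\R$ your outline is essentially a complete proof. Step 2 via the weak-derivative identity and Proposition \ref{Hlinear} is fine. In Step 3 you should also choose the good set, via Lusin, so that $x\mapsto D_vf(x)$ is continuous on it. Otherwise you cannot replace the derivatives at the nearby good points by those at $x$.

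\textbf{The gap in Step 2.} First, the additivity argument fails as described. When $[v,w]\neq0$, the element $\exp(-t(v+w))\exp(tv)\exp(tw)$ has second-layer component $\frac{t^2}{2}[v,w]$, so its homogeneous norm is of order $t$, the same order as the increments. Comparing the two paths therefore only bounds $|\ell(v)+\ell(w)-\ell(v+w)|$ by a constant, not by $0$. Second, and more seriously, you assert that a linear $\ell:V_1\to V_1'$ extends to a homogeneous homomorphism because ``the derivative map is compatible with brackets.'' This is exactly what must be proved when $\G_2$ is not abelian, and nothing in Steps 1--2 proves it. For instance, take $\G_1=\R^2$ and $\G_2=\hh^1$. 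A linear map $\ell:\R^2\to V_1(\hh^1)$ extends to a Lie algebra homomorphism only if $[\ell(e_1),\ell(e_2)]=0$, i.e.\ only if $\ell$ has rank at most one. That the horizontal derivatives of a Lipschitz map $\R^2\to\hh^1$ have this property a.e.\ is an output of Pansu's theorem, not an input. Step 3 identifies $\exp(t_1\ell(v_1))\cdots\exp(t_M\ell(v_M))$ with $L_x(x^{-1}y)$ through the homomorphism property, so for general $\G_2$ the argument as organized is circular.

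A smaller point in Step 1: projecting to the abelianization only gives differentiability of the horizontal part of $t\mapsto f(x\exp(tv))$. P-differentiability also needs the layer-$j$ components of $f(x)^{-1}f(x\exp(tv))$ to be $o(t^j)$. This does hold at Lebesgue points of the horizontal velocity, because those components are iterated integrals of it.

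\textbf{The fix (Pansu's route).} Reverse Steps 2 and 3. Your Step 3 estimate needs no linearity. Take a density point $x$ of a Lusin--Egorov set for finitely many directions $v_j\in\{X_1,\dots,X_{m_1}\}$; every point of $B(1)$ is a word of length $M$ in these directions with $|s_j|\le C$, as in your Step 3. To propagate the $o(t)$ errors in the noncommutative groups, use the bounded word length together with the conjugation estimate $N(g^{-1}hg)\lesssim N(h)+N(h)^{1/k}N(g)^{1-1/k}$, with $k$ the step of the group. One obtains
\[
\delta^2_{1/t}\Big(f(x)^{-1}f\big(x\cdot\delta^1_t(\exp(s_1v_1)\cdots\exp(s_Mv_M))\big)\Big)\longrightarrow\exp(s_1\ell(v_1))\cdots\exp(s_M\ell(v_M))
\]
as $t\to0$, uniformly for $|s_j|\le C$.

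The right-hand side does not depend on $t$, so two words representing the same point of $\G_1$ give the same element of $\G_2$. Hence
\[
L_x(z):=\lim_{t\to0}\delta^2_{1/t}\big(f(x)^{-1}f(x\cdot\delta^1_tz)\big)
\]
is well defined. It satisfies $L_x(z\exp(sv_j))=L_x(z)\exp(s\ell(v_j))$, commutes with dilations, and is a homomorphism by concatenation of words. Linearity of $v\mapsto D_vf(x)$ and compatibility with brackets then follow as properties of the differential of $L_x$, instead of being assumed. The uniform convergence on $B(1)$ is exactly P-differentiability at $x$.
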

We denote by $C^k(\Omega)$ the set of all real-valued functions of class $C^k$ defined on $\Omega\subset\G\equiv\R^n$, while we use the notation $C^k(\Omega,H\G)$ to denote the set of all sections of $H\G\equiv \R^{m_1}$ of class $C^k$.\linebreak Clearly, we also write $C^k_c(\Omega)$ (resp. $C^k_c(\Omega, H\G)$) for functions (resp. sections) with compact support in $\Omega$, while $C^k_b(\Omega)$ (resp. $C^k_b(\Omega, H\G)$) stands for bounded functions (resp. sections).\medskip\\
If $f\in C^1(\Omega)$, we denote by $\nabla_\G f\in C^0(\Omega,H\G)$ the \textit{horizontal gradient} of $f$, defined as the section 
\[\nabla_\G f:=\sum_{i=1}^{m_1}(X_if)X_i\]  or simply, in coordinates, $\nabla_\G f =(X_1f,\dots,X_{m_1}f)$.
Dually, for $\varphi=(\varphi_1,\dots,\varphi_{m_1})\in C^1(\Omega,H\G)$, we will instead denote by div$_\G\varphi\in C^0(\Omega)$ the \textit{horizontal divergence} of $\varphi$, defined by
\[\text{div}_\G\varphi:=\sum_{i=1}^{m_1}X_i\varphi_i.\]
\begin{example}[The Heisenberg group]\label{H1}
The (first) Heisenberg group $\G=\hh^1=(\R^3,\cdot)$ is the (simplest) non-abelian Carnot group endowed with the following group law: if $x=(x_1,x_2,x_3)$ and $y=(y_1,y_2,y_3)$,
\[
x\cdot y:=\,\left(x_1+y_1,x_2+y_2,x_3+y_3+\frac{1}{2} (x_1y_2-x_2y_1)\right)\,.
\]
It is equipped with the family of dilations $\delta_\lambda:\,\hh^1\to \hh^1$, defined as
\[
\delta_\lambda(x):=\,\left(\lambda\,x_1,\lambda\,x_2, \lambda^2\,x_3\right)\text{ if }x=(x_1,x_2,x_3),\,\lambda>\,0\,.
\]
If $\mathfrak h^1$ denotes the Lie algebra associated to $\hh^1$, a basis of $\mathfrak h^1$ is given by the left-invariant vector fields
\[
X_1(x):=\,\partial_1-\frac{x_2}{2}\partial_3,\,X_2(x):=\,\partial_2+\frac{x_1}{2}\partial_3,\,X_3(x):=\,\partial_3\,
\]
and the only nontrivial commutation between them is
\[
[X_1,X_2]=\,X_3\,.
\]
Then a stratification of $\mathfrak h^1$  is
\[
\mathfrak h^1=V_1\oplus V_2
\]
with 
\[
V_1:=\,{\rm span}\{X_1,X_2\}\text{ and }V_2:={\rm span}\{X_3\}.
\]
Thus $\hh^1$ turns out to be a Carnot group of step $k=2$, and topological and homogeneous dimension $n=3$ and $Q=4$, respectively. The horizontal gradient $\nabla_\G=\,\nabla_{\hh^1}:=\,(X_1,X_2)$ (see, for instance, \cite[Chapter 10]{stein}).
A very relevant homogeneous norm on $\hh^1$ is the so called {\it Cygan-Korányi} norm defined as
\[
N(x):=\,\left((x_1^2+x_2^2)^2+16 x_3^2\right)^{1/4}\text{ if } x=(x_1,x_2,x_3)\in\hh^1\,
\]
(see, for instance, \cite[Proposition 14.2.5]{LeDonne}). It is immediate the $N\in C^\infty(\R^3\setminus\{0\})$ and it is invariant w.r.t. the horizontal rotations. A simple calculation gives that
\[
\begin{split}
\nabla_{\hh^1}N(x)&=\,\left(X_1 N(x), X_2 N(x)\right)\\
&=\,\frac{1}{N(x)^3}\left((x_1^2+x_2^2)x_1-4 x_2 x_3, (x_1^2+x_2^2)x_2+4 x_1 x_3\right)\text{ for each }x\in\hh^1\setminus\{0\}\,,
\end{split}
\]
and 
\[
|\nabla_{\hh^1}N(x)|=\,\frac{\sqrt{x_1^2+x_2^2}}{N(x)}\text{ for each }x\in\hh^1\setminus\{0\}\,.
\]
In particular, note that 
\[
|\nabla_{\hh^1}N(x)|=\,0\text{ for each }x\in\{(0,0,x_3):\,x_3\neq\,0\}\,.
\]
This is a typical feature of sub-Riemannian geometry: unlike in the Euclidean case, the (horizontal) gradient of a norm may vanish away from the origin.
\end{example}
The space $L^p(\Omega,H\G)$ denotes the set of all sections $\varphi=(\varphi_1,\dots,\varphi_{m_1})$ which are measurable and such that
\[\|\varphi\|^p_{L^p(\Omega)}:=\left\{\int_\Omega|\varphi(x)|_x^p\,dx\right\}<+\infty.\]
Sobolev spaces for scalar functions defined on a Carnot group $\G$ are constructed in
the usual way by considering distributional derivatives along the basis $X_1,\dots,X_{m_1}$ of the horizontal layer $V_1$ of $\G$: a function $g\in L^1_{\rm loc}(\Omega)$ is the \textit{weak} (or \textit{distributional}) \textit{derivative} of $f\in L^1_{\rm loc}(\Omega)$ along the vector field $X_i$ if
\[\int_\Omega g\varphi=-\int_\Omega f X_i \varphi\quad\text{ for all }\varphi\in C^1_c(\Omega).\]
As usual, we will write $X_i f$ to denote the weak derivative of $f$ along $X_i$ and \[\nabla_\G f := (X_1f,\dots,X_{m_1} f)\] for the \textit{weak horizontal gradient}. If $f:\Omega\to\R$ is continuous and the same holds for its distributional derivatives $X_if$, $i=1,\dots,m_1$, then we say that $f\in C^1_\G(\Omega)$. In this case, $X_if$ exists also in classical sense, by computing the derivative of $f$ along the vector field $X_i$. Notice that $C^1(\Omega)\subset C^1_\G(\Omega)$ and the inclusion is actually strict (see, for instance, \cite[Remark 5.9]{FSSC01}).
\begin{definition}[Horizontal Sobolev space]\label{horSobspace} For $1\leq p\leq +\infty$, we define the (\textit{horizontal}) \textit{Sobolev space} $W^{1,p}_\G(\Omega)$ as the space of all functions $f\in L^p(\Omega)$ having weak horizontal gradient $\nabla_\G f\in L^p(\Omega, H\G)$, equipped with the norm
\[\|f\|_{W^{1,p}_\G(\Omega)}:=\|f\|_{L^p(\Omega)}+\|\nabla_\G f\|_{L^p(\Omega)}.\]
As usual, we write $W^{1,p}_{\G, \rm loc}(\Omega)$ to denote the space of functions in $W^{1,p}_{\G}(U)$ for every $U\Subset\Omega$. 
\end{definition} 

Meyers-Serrin type approximation theorems for function spaces associated with a family of (locally) Lipschitz vector fields are well known and in particular they apply to the case of Carnot-type vector fields (see \cite[Theorem 1.2.3]{FSSC96} or \cite[Theorem 1.13]{garofalo}).
\begin{theorem}\label{meyersserrin}
 Let $1\leq p<+\infty$. The space $C^\infty(\Omega)\cap W^{1,p}_\G(\Omega)$ is dense in $W^{1,p}_\G(\Omega)$.
 
\end{theorem}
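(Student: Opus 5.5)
The plan is to follow the classical Meyers--Serrin scheme, adapted to the vector fields $X_1,\dots,X_{m_1}$. The only non-Euclidean ingredient is a Friedrichs-type commutator lemma.

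First I fix a locally finite open cover of $\Omega$ by sets $\Omega_j=\{x\in\Omega:\ \mathrm{dist}(x,\partial\Omega)>1/(j+1),\ |x|<j+1\}\setminus\overline{\Omega_{j-2}}$. I take a smooth partition of unity $(\psi_j)$ subordinate to it, with $\sum_j\psi_j=1$ on $\Omega$. For $f\in W^{1,p}_\G(\Omega)$, each $\psi_jf$ lies in $W^{1,p}_\G(\Omega)$ and has compact support in $\Omega_j$. The Leibniz rule $X_i(\psi_jf)=\psi_jX_if+fX_i\psi_j$ holds in the weak sense. To check it, I test against $\varphi$ and use that $X_i$ has divergence zero with respect to $\mathcal L^n$: the coefficients of $X_i$ do not depend on $x_i$ in exponential coordinates. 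This also gives the integration-by-parts formula $\int gX_i\varphi=-\int\varphi X_ig$.

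Next I mollify each piece with a standard Euclidean mollifier, $f_j:=(\psi_jf)*\eta_{\delta_j}$. I choose $\delta_j$ small enough that $\mathrm{supp} f_j\Subset\Omega_j$ and $\|f_j-\psi_jf\|_{W^{1,p}_\G}<\eps2^{-j}$. Then $g:=\sum_jf_j$ is a locally finite sum, so $g\in C^\infty(\Omega)$, and $\|g-f\|_{W^{1,p}_\G(\Omega)}\le\eps$. Using left-invariant group convolution instead would make $X_i$ commute with convolution on the function side. However, left-invariant fields commute with right convolution, and the available convolution acts on the wrong side, so I prefer the Euclidean mollifier and handle the commutator directly.

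The crux, and the expected main obstacle, is to show $X_i(u*\eta_\delta)\to X_iu$ in $L^p$ for $u\in W^{1,p}_\G$ with compact support. Here I use the Friedrichs lemma. Write $X_i=\sum_k a_{ik}(x)\partial_k$ with $a_{ik}$ smooth (polynomial), hence Lipschitz on compact sets, and $\sum_k\partial_ka_{ik}=0$. Then
\[
X_i(u*\eta_\delta)-(X_iu)*\eta_\delta=\int u(y)\Big[\sum_k\big(a_{ik}(x)-a_{ik}(y)\big)\partial_k\eta_\delta(x-y)\Big]dy,
\]
where the term $\sum_k\partial_k a_{ik}$ vanishes. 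The kernel is bounded in absolute value by $L\delta|\nabla\eta_\delta|(x-y)$, whose $L^1$ norm is uniformly bounded in $\delta$. So the commutator operator is uniformly bounded on $L^p$ over a compact set. It tends to zero for $u\in C^\infty_c$, since it is then a smooth difference converging uniformly. By density of $C_c^\infty$ in $L^p$ and the uniform bound, the commutator tends to zero for all $u\in L^p$. Combined with $(X_iu)*\eta_\delta\to X_iu$ in $L^p$, this gives the claim, using $p<\infty$. The remaining steps are routine bookkeeping.
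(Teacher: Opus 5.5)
Your argument is correct and is essentially the proof behind the references the paper cites for this theorem (\cite[Theorem 1.2.3]{FSSC96}, \cite[Theorem 1.13]{garofalo}): the Meyers--Serrin partition-of-unity scheme combined with a Friedrichs commutator lemma for Lipschitz vector fields; the paper itself only quotes the result. One side remark is wrong: with the paper's convention $\eta_\delta\ast u$ is an average of left translates of $u$, so a direct duality computation gives $X_i(\eta_\delta\ast u)=\eta_\delta\ast X_iu$ (cf.\ Lemma \ref{propconv1}(iii)), and intrinsic mollification would let you skip the commutator estimate entirely on a Carnot group---your Euclidean route nonetheless remains valid.
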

If $f\in \text{Lip}_{\G,c}(\Omega)$, then $f\in W^{1,\infty}_\G(\Omega)$ (see \cite[Theorem 1.3]{GN98}). In particular, \[X_i f(x)=D_Pf(x)(e_i)\]
for almost every $x\in\Omega$ (cf. \cite[Remark 3.3]{montiserra}); hence weak and classical derivatives coincide.
\begin{definition}[Functions of bounded variation]\label{bvG}
    The set $BV_\G(\Omega)$ of \textit{functions of bounded horizontal variation} is the set of all maps $f\in L^1(\Omega)$ such that
    \[|D_\G f|(\Omega):=\sup\left\{\int_\Omega f\,\text{\rm div}_\G\varphi : \varphi\in C^1_c(\Omega,H\G), |\varphi(x)|_x\leq 1\right\}<+\infty.\]
    We write $BV_{\G, \rm loc}(\Omega)$ to denote the space of functions which are in $BV_\G(U)$ for every $U\Subset\Omega$.
\end{definition}

\begin{remark} It is easy to see that both the (Euclidean) Sobolev space 
$W^{1,p}_{\mathrm{loc}}(\Omega)$ (with $1 \le p \le \infty$) 
and the space $BV_{\mathrm{loc}}(\Omega)$ 
are (strictly) contained in the horizontal Sobolev space 
$W^{1,p}_{\mathbb{G}, \mathrm{loc}}(\Omega)$ 
and in the space $BV_{\mathbb{G}, \mathrm{loc}}(\Omega)$, respectively.
\end{remark}

\begin{theorem}[Structure theorem for $BV_\G(\Omega)$]\label{structurethm}
  Let $f\in BV_{\G, \rm loc}(\Omega)$. Then $|D_\G f|$ is a Radon measure on $\Omega$ (which is finite if $f\in BV_\G(\Omega))$. Moreover, there exists a $|D_\G f|$-measurable horizontal section $\sigma_f:\Omega\to H\G$ with $|\sigma_f(x)|_x=1$ for $|D_\G f|$-a.e. $x\in\Omega$ such that
\[\int_\Omega f\,\text{\rm div}_\G\varphi=-\int_\Omega \langle\varphi, \sigma_f\rangle_x\,d|D_\G f|\quad\text{ for all }\varphi\in C^1_c(\Omega, H\G).\]
\end{theorem}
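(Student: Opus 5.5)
The plan is to reduce the statement to the smooth case, using the Riesz representation theorem together with the Radon–Nikodym/polar decomposition. First I will check that $\mu(\varphi):=-\int_\Omega f\,\mathrm{div}_\G\varphi$ is linear on $C^1_c(\Omega,H\G)$. Next I will show that on every $U\Subset\Omega$ it satisfies $|\mu(\varphi)|\le |D_\G f|(U)\sup|\varphi|$ for $\varphi$ supported in $U$. Here we use $|\varphi(x)|_x=|\varphi(x)|$ in the identification $H\G\equiv\R^{m_1}$; finiteness of $|D_\G f|(U)$ is exactly the assumption $f\in BV_\G(U)$. By density of $C^1_c$ in $C_c$ in the sup norm, $\mu$ extends uniquely to a continuous linear functional on $C_c(\Omega,\R^{m_1})$ that is bounded on each $C_c(U,\R^{m_1})$.

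The Riesz representation theorem for vector-valued functionals then provides a positive Radon measure $\lambda$ on $\Omega$, namely the total variation of the functional, and a $\lambda$-measurable $\sigma:\Omega\to\R^{m_1}$ with $|\sigma|=1$ $\lambda$-a.e. such that $\mu(\varphi)=\int\langle\varphi,\sigma\rangle\,d\lambda$. The total variation is given by $\lambda(A)=\sup\{\mu(\varphi):\varphi\in C_c(A,\R^{m_1}),|\varphi|\le1\}$ for open $A$.

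The remaining point, and the main obstacle, is to identify $\lambda$ with $|D_\G f|$. Specifically, the supremum defining $\lambda(A)$ over continuous fields has to agree with the one in Definition \ref{bvG}, which runs over $C^1_c$ fields. I plan to approximate $\varphi\in C_c(A,\R^{m_1})$ with $|\varphi|\le1$ uniformly by $\varphi_j\in C^1_c(A,\R^{m_1})$, still satisfying $|\varphi_j|\le1$. This can be done by Euclidean mollification of each component: the pointwise constraint survives because $|\cdot|$ is the Euclidean norm on $\R^{m_1}$ and mollification is a convex average. Then $\mu(\varphi_j)\to\mu(\varphi)$, by continuity of the extension on $C_c(U)$ for some $U\Subset\Omega$ containing all the supports. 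The supremum in Definition \ref{bvG} carries the opposite sign convention, but replacing $\varphi$ by $-\varphi$ handles this. It follows that $\lambda(A)=|D_\G f|(A)$ for every open $A$, where $|D_\G f|(A)$ is defined by the same supremum over $A$. Since both are Radon measures determined by their values on open sets, the set function $|D_\G f|$ extends to the Radon measure $\lambda$, and it is finite when $f\in BV_\G(\Omega)$.

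Finally, setting $\sigma_f:=-\sigma$ and viewing it as a horizontal section via $\sigma_f(x)=\sum_i\sigma_{f,i}(x)X_i(x)$ gives $\int_\Omega f\,\mathrm{div}_\G\varphi=-\mu(\varphi)$. With the sign bookkeeping fixed so that it matches the stated formula, this yields $-\int\langle\varphi,\sigma_f\rangle_x\,d|D_\G f|$ for all $\varphi\in C^1_c(\Omega,H\G)$, with $|\sigma_f|_x=1$ a.e. The only place where the Carnot structure enters is through the orthonormality of $X_1,\dots,X_{m_1}$, which makes the fibrewise norms Euclidean; beyond that the argument is the classical one.
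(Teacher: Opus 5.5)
Your argument is correct and matches the standard proof. The paper gives no proof of this theorem and only recalls it as a known preliminary. The classical Riesz-representation argument (Evans--Gariepy in the Euclidean case, transferred verbatim to Carnot vector fields in the literature the paper cites around Theorems \ref{meyersserrin} and \ref{anzellotti}) is exactly what you wrote, including the mollification step showing that the supremum over $C^1_c$ fields equals the total variation of the extended functional.

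The only fix is the sign at the end. Since you already put the minus sign into $\mu(\varphi)=-\int_\Omega f\,\mathrm{div}_\G\varphi$, the Riesz theorem gives $\int_\Omega f\,\mathrm{div}_\G\varphi=-\int_\Omega\langle\varphi,\sigma\rangle\,d\lambda$ directly. So you must take $\sigma_f:=\sigma$; your choice $\sigma_f:=-\sigma$ produces the formula with the opposite sign.
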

\noindent The measure $|D_\G f|$ is called the \textit{total variation measure} of $f$. We finally let $D_\G f:=\sigma_f\cdot |D_\G f|$ to be the \textit{distributional horizontal gradient} of $f\in BV_\G(\Omega)$, that is identified with a $\R^{m_1}$-valued Radon measure on $\Omega$, whose total mass actually coincides with $|D_\G f|.$ \medskip\\
Recall that, if $\mu$ is a $\R^m$-valued Radon measure on $\Omega$, the \textit{total mass} (or \textit{variation}) of $\mu$ is the positive measure defined by
\[|\mu|(E):=\sup\left\{\sum_{h=1}^\infty|\mu(E_h)|:E_h \text{ Borel sets},\,E\subset\bigcup_{h=1}^\infty E_h\right\}.\]
We decompose 
\begin{equation}\label{decompDGf}
D_\G f=\nabla^{\rm ac}_\G f\,\mathcal L^n+ D^s_\G f,
\end{equation}
where $\nabla^{\rm ac}_\G f$ is the density of the \textit{absolute continuous part} of $D_\G f$ with respect to $\mathcal L^n$, while $D^s_\G f$ denotes the \textit{singular part} of the measure. 
Notice that $W^{1,1}_\G(\Omega)\subset BV_\G(\Omega)$ and in this case $D_\G f=\nabla_\G^{\rm ac}f\,\mathcal L^n=\nabla_\G f\, \mathcal L^n$. In particular  \[|D_\G f|(\Omega)=\int_\Omega|\nabla_\G f|\,dx\quad\text{for all } f\in W^{1,1}_\G(\Omega).\]
We equip the space $BV_\G(\Omega)$ with the norm
\[\|f\|_{BV_\G(\Omega)}:=\|f\|_{L^1(\Omega)}+|D_\G f|(\Omega).\]
As expected, smooth functions are not dense in $BV_\G(\Omega)$. Neverthless, as in the Euclidean case, an Anzellotti-Giaquinta type theorem holds for spaces associated to Lipschitz vector fields. In our context, the result reads as follows (see \cite[Theorem 1.14]{garofalo} or \cite[Theorem 2.2.2]{FSSC96}).
\begin{theorem}\label{anzellotti}
    Let $f\in BV_\G(\Omega)$. There exists a sequence $(f_h)_h\subset C^\infty(\Omega)\cap BV_\G(\Omega)$ such that
    \[f_h\to f\;\;\text{in }L^1(\Omega);\qquad|D_\G f_h|(\Omega)\to |D_\G f|(\Omega).\]
\end{theorem}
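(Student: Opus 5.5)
This is an Anzellotti--Giaquinta type approximation, and I would prove it as in the Euclidean case, adapting the Meyers--Serrin partition-of-unity argument. Write $X_i=\sum_{j}a_{ij}(x)\partial_j$, where in exponential coordinates the coefficients $a_{ij}$ are polynomials, hence smooth and locally Lipschitz. Fix $\eta>0$. Choose an exhaustion of $\Omega$ by open sets $\Omega_k\Subset\Omega_{k+1}$, with $\Omega_0=\emptyset$, and pick $\Omega_1$ so that
\[
|D_\G f|(\Omega\setminus\Omega_1)<\eta .
\]
Set $A_k=\Omega_{k+1}\setminus\overline{\Omega_{k-1}}$. These sets cover $\Omega$ and have locally bounded overlap (at most $2$). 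Take a smooth partition of unity $(\psi_k)$ subordinate to $(A_k)$, with $\sum_k\psi_k\equiv 1$ on $\Omega$.

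Let $J_\varepsilon$ be a standard Euclidean mollifier. For each $k$ choose $\varepsilon_k$ so small that the following three conditions hold:
\begin{itemize}
\item $\operatorname{supp}\bigl((f\psi_k)*J_{\varepsilon_k}\bigr)\subset A_k$;
\item $\|(f\psi_k)*J_{\varepsilon_k}-f\psi_k\|_{L^1}<\eta 2^{-k}$;
\item $\|(fX_i\psi_k)*J_{\varepsilon_k}-fX_i\psi_k\|_{L^1}<\eta 2^{-k}$ for every $i$.
\end{itemize}
Define
\[
f_\eta:=\sum_k (f\psi_k)*J_{\varepsilon_k}\in C^\infty(\Omega).
\]
The $L^1$ convergence $f_\eta\to f$ is then immediate from the second condition.

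By lower semicontinuity of the total variation under $L^1$ convergence (directly from Definition \ref{bvG}, as a supremum of continuous linear functionals), we get $|D_\G f|(\Omega)\le\liminf|D_\G f_\eta|(\Omega)$. It therefore suffices to prove
\[
|D_\G f_\eta|(\Omega)\le |D_\G f|(\Omega)+C\eta .
\]
Fix $\varphi\in C^1_c(\Omega,H\G)$ with $|\varphi|\le1$. Using $\sum_k X_i\psi_k=0$, compute
\[
\int f_\eta\,\mathrm{div}_\G\varphi=\sum_k\int f\psi_k\,\mathrm{div}_\G(\varphi*J_{\varepsilon_k})+\sum_k\int f\psi_k\,R_k(\varphi),
\]
where $R_k(\varphi)=(\mathrm{div}_\G\varphi)*J_{\varepsilon_k}-\mathrm{div}_\G(\varphi*J_{\varepsilon_k})$ is a Friedrichs commutator.

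For the first sum, rewrite each term as
\[
\int f\,\mathrm{div}_\G\bigl(\psi_k(\varphi*J_{\varepsilon_k})\bigr)-\int f\,\langle\nabla_\G\psi_k,\varphi*J_{\varepsilon_k}\rangle .
\]
Since $|\psi_k(\varphi*J_{\varepsilon_k})|\le\psi_k$, the pieces with $k\ge2$ are bounded by $|D_\G f|(\Omega\setminus\Omega_1)$ times the overlap number, and the $k=1$ piece is bounded by $|D_\G f|(\Omega)$. The second piece, summed over $k$, is small by the third choice of $\varepsilon_k$ combined with $\sum_k f\nabla_\G\psi_k=0$.

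The main obstacle is the commutator $R_k$, which does not vanish because the $X_i$ have variable coefficients. I expect to handle it with the Friedrichs lemma: for locally Lipschitz coefficients,
\[
\sum_j\bigl[(a_{ij}\partial_j g)*J_\varepsilon-a_{ij}\partial_j(g*J_\varepsilon)\bigr]\to0
\]
in $L^1_{\rm loc}$ for $g\in L^1_{\rm loc}$. Applied in duality, this means I transfer the mollification onto $f\psi_k$ and require, as an extra smallness condition on $\varepsilon_k$ (possible since $f\psi_k\in L^1$ with compact support in $A_k$),
\[
\sup_{|\varphi|\le1}\Bigl|\int f\psi_k\,R_k(\varphi)\Bigr|\le\eta 2^{-k}.
\]
This uniformity in $\varphi$ is exactly what the Friedrichs estimate yields, because the commutator kernel is bounded by $\|\nabla a\|_{L^\infty(A_k)}$ times an approximate identity. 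Summing the three contributions gives the bound $|D_\G f|(\Omega)+C\eta$, and letting $\eta=1/h\to0$ produces the required sequence $(f_h)_h$.
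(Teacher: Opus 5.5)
Your proposal is correct and is essentially the standard proof behind the results the paper cites for this statement (\cite[Theorem 2.2.2]{FSSC96}, \cite[Theorem 1.14]{garofalo}); the paper itself gives no argument of its own. The key steps all hold: the Meyers--Serrin partition of unity with Euclidean mollifiers, the cancellation $\sum_k f\nabla_\G\psi_k=0$, and the commutator term. For the latter, duality indeed gives $\sup_{|\varphi|\le 1}\bigl|\int f\psi_k R_k(\varphi)\bigr|\le\|T_{\varepsilon_k}(f\psi_k)\|_{L^1}$, where $T_\varepsilon g=\bigl(\sum_j\partial_j[(a_{ij}g)*J_\varepsilon-a_{ij}(g*J_\varepsilon)]\bigr)_{i=1,\dots,m_1}$ is a Friedrichs commutator that tends to $0$ in $L^1$ for compactly supported $g\in L^1$.
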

\begin{definition}
    Given a measurable set $E\subset \G$, we say that $E$ has \textit{finite $\G$-perimeter} if $\chi_E$ is a function of bounded horizontal variation. In this case, according to Theorem \ref{structurethm},  $|D_\G\chi_E|$ is a finite Radon measure, which is called the $\G$-\textit{perimeter measure} of $E$ and is denoted by $|\partial E|_\G$.
\end{definition}
We recall the following classical result of integration in polar coordinates, which combines \cite[Proposition 1.15]{folland} and \cite[Corollary 4.6]{montiserra}:
\begin{theorem}[Integration in polar coordinates]\label{polarcoordinates}
   Let $\G$ be a Carnot group endowed with a homogeneous norm $N$. Let \[S=S_\G:=\{x\in\G: N(x)=1\}.\] There is a unique Radon measure $\sigma$ on $S$ such that, for all $u\in L^1(\G)$, 
   \begin{equation}\label{polarstein}\int_\G u(x)\,dx=\int_0^{+\infty}\int_S u(\delta_r(y))\,r^{Q-1}\,d\sigma(y)\,dr.\end{equation}
 Moreover, if $N=\|\cdot\|_{c}$, then it holds that $\sigma=|\partial B_c(1)|_\G$.
 \end{theorem}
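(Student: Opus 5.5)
The plan is to build $\sigma$ directly from Lebesgue measure on "cones" over subsets of $S$, using the homogeneity $\mathcal L^n(\delta_\lambda E)=\lambda^Q\mathcal L^n(E)$. Existence and uniqueness then follow from a monotone class argument. The identification $\sigma=|\partial B_c(1)|_\G$ for the Carnot–Carathéodory norm will come from the coarea formula together with the uniqueness part.

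\emph{Step 1 (polar homeomorphism and definition of $\sigma$).} Consider $\Phi:(0,\infty)\times S\to\G\setminus\{0\}$, $\Phi(r,y)=\delta_r y$. Since $N$ is continuous, positive off $0$ and $1$-homogeneous, $\Phi$ is a homeomorphism with inverse $x\mapsto (N(x),\delta_{1/N(x)}x)$. Moreover $S$ is compact, being closed and bounded in $\R^n$ by the bilipschitz equivalence \eqref{equivalencenorms} with $d_c$ and the comparison with $d_E$ on compacts. For a Borel set $E\subset S$ and $0\le a<b$, set
\[
C_E(a,b]:=\Phi((a,b]\times E),\qquad \sigma(E):=Q\,\mathcal L^n(C_E(0,1]).
\]
Countable additivity of $\sigma$ follows from that of $\mathcal L^n$, since $\Phi$ is injective. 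Finiteness follows from $C_S(0,1]\subset \overline{B(1)}$. Hence $\sigma$ is a finite Borel measure on the compact metric space $S$, and therefore Radon.

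\emph{Step 2 (the formula on cones, then everywhere).} We have $C_E(0,b]=\delta_b(C_E(0,1])$, so $Q$-homogeneity of Haar measure gives
\[
\mathcal L^n(C_E(a,b])=(b^Q-a^Q)\,\sigma(E)/Q=\int_a^b\int_S\chi_{C_E(a,b]}(\delta_r y)\,r^{Q-1}\,d\sigma(y)\,dr .
\]
Thus the two Borel measures $\mathcal L^n\llcorner(\G\setminus\{0\})$ and $\Phi_\#(r^{Q-1}dr\otimes\sigma)$ agree on the $\pi$-system of cones $C_E(a,b]$. Through the homeomorphism $\Phi$, this $\pi$-system generates the Borel $\sigma$-algebra of $\G\setminus\{0\}$, and both measures are $\sigma$-finite on the cones $C_S(0,k]$. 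Dynkin's $\pi$–$\lambda$ theorem then gives equality of the measures; the point $\{0\}$ is $\mathcal L^n$-null. Formula \eqref{polarstein} follows for indicators, then for simple functions, then for nonnegative measurable $u$ by monotone convergence, and finally for $u\in L^1(\G)$ by splitting $u=u^+-u^-$.

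For uniqueness, suppose $\sigma'$ also satisfies \eqref{polarstein}. Testing it with $u=\chi_{C_E(0,1]}$ gives $\mathcal L^n(C_E(0,1])=\sigma'(E)/Q$, so $\sigma'=\sigma$.

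\emph{Step 3 (the case $N=\|\cdot\|_c$).} This is the step I expect to be the main obstacle, since it needs two nontrivial inputs from the literature (Monti–Serra Cassano):
\begin{enumerate}
\item the eikonal identity $|\nabla_\G\|\cdot\|_c|=1$ a.e., available because $\|\cdot\|_c$ is $1$-Lipschitz, hence Pansu differentiable a.e. by Theorem \ref{pansuthm}, and distances are realized by geodesics;
\item the coarea formula for Lipschitz functions on $\G$, namely $\int_\G u\,|\nabla_\G g|\,dx=\int_0^\infty\int u\,d|\partial\{g<r\}|_\G\,dr$.
\end{enumerate}
Applying the coarea formula with $g=\|\cdot\|_c$ and using the eikonal identity gives
\[
\int_\G u\,dx=\int_0^\infty\int u\,d|\partial B_c(r)|_\G\,dr .
\]
Next use that $\delta_r$ is a group automorphism which rescales $X_1,\dots,X_{m_1}$ by $r$ and Haar measure by $r^Q$. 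It follows that $|\partial B_c(r)|_\G=r^{Q-1}(\delta_r)_\#|\partial B_c(1)|_\G$. Moreover $|\partial B_c(1)|_\G$ is supported on the topological boundary of $B_c(1)$, which is contained in $S$. Hence $|\partial B_c(1)|_\G$ satisfies \eqref{polarstein}, and uniqueness from Step 2 yields $\sigma=|\partial B_c(1)|_\G$.
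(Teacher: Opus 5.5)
Your proposal is correct and essentially reproduces the arguments behind the two results the paper cites instead of giving a proof. Steps 1–2 are the Folland–Stein construction $\sigma(E)=Q\,\mathcal L^n(C_E(0,1])$, with the extension from cones to all of $L^1(\G)$ and the uniqueness check, and Step 3 is the Monti–Serra Cassano route (eikonal equation, coarea formula, the scaling $|\partial B_c(r)|_\G=r^{Q-1}(\delta_r)_\#|\partial B_c(1)|_\G$, then uniqueness).
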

 Notice that if $u$ is a radial function, namely $u(x)=\widetilde u(N(x))$, then \eqref{polarstein} takes the form
 \begin{equation}\label{iwrtradfctpoco}
 \int_\G u(x)\,dx=\sigma(S_\G)\int_0^{+\infty}\widetilde u(r)\, r^{Q-1}\,dr
 \end{equation}


We close this section by recalling the classical Lebesgue differentiation theorem on Carnot groups, which follows from the doubling property of the space (see \cite[Proposition 1.10]{magnani}).
\begin{proposition}\label{lebesguepoint}
    Let $u\in L^p(\Omega,\R^m)$ for $1\leq p<+\infty$. For almost every $x\in\Omega$ it holds
    \begin{equation}\label{eq_lebesguepoint}\lim_{r\to 0}\fint_{B(x,r)}|u(y)-u(x)|^p\,dy=0.\end{equation}
\end{proposition}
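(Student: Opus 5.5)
The statement immediately above is the Lebesgue differentiation theorem in $L^p$ form, Proposition \ref{lebesguepoint}. I would prove it by reducing to the scalar case $p=1$ through a standard countable-density argument. The key tool is the weak-type $(1,1)$ maximal inequality. It holds on $(\G,d,\mathcal L^n)$ because the space is doubling; in fact it is Ahlfors $Q$-regular, with $\mathcal L^n(B(x,r))=C_N r^Q$.

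The first step is to prove the weak $(1,1)$ bound for the centered Hardy--Littlewood maximal operator
\[
Mu(x)=\sup_{r>0}\fint_{B(x,r)}|u|\,dy .
\]
I would use the Vitali $5r$-covering lemma, which holds in any doubling metric space. If $\{Mu>\lambda\}$ contains a compact set $K$, each point of $K$ is the centre of a ball with $\int_B|u|>\lambda|B|$. Extracting finitely many disjoint balls whose $5$-fold enlargements cover $K$ and using $|B(x,5r)|=5^Q|B(x,r)|$ gives
\[
|K|\le 5^Q\lambda^{-1}\|u\|_{L^1}.
\]
The second step applies this to differentiation. For $u\in L^1_{\rm loc}$, set $\Theta u(x)=\limsup_{r\to0}\fint_{B(x,r)}|u(y)-u(x)|\,dy$. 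If $g$ is continuous with compact support, $\Theta g\equiv 0$. Since $\Theta u\le\Theta(u-g)\le M(u-g)+|u-g|$, Chebyshev and the maximal inequality give
\[
|\{\Theta u>2\lambda\}|\lesssim \lambda^{-1}\|u-g\|_{L^1}.
\]
Density of $C_c$ in $L^1$ then yields $\Theta u=0$ a.e. Since $\Omega$ is open, small balls around a point of $\Omega$ lie in $\Omega$, so I would localise by extending $u$ by zero on compact subsets.

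The third step passes to the $L^p$ statement for $\R^m$-valued $u$. I would take a countable dense set $\{q_j\}\subset\R^m$ and apply the $p=1$ result to the scalar functions $|u-q_j|^p\in L^1_{\rm loc}$. Off a null set, every $j$ satisfies
\[
\lim_{r\to0}\fint_{B(x,r)}|u-q_j|^p=|u(x)-q_j|^p .
\]
Then, using $|u(y)-u(x)|^p\le 2^{p-1}\bigl(|u(y)-q_j|^p+|q_j-u(x)|^p\bigr)$, the $\limsup$ is at most $2^p|u(x)-q_j|^p$. Choosing $q_j$ close to $u(x)$ gives \eqref{eq_lebesguepoint}.

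The only genuinely non-Euclidean point is the covering lemma, and it uses nothing beyond the triangle inequality for $d$ and the homogeneity of the Haar measure, so I expect no real obstacle. The technically most delicate step is the localisation to an open $\Omega$, which is routine.
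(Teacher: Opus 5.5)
Your proof is correct. It rests on three standard ingredients: the weak $(1,1)$ maximal inequality, obtained from the Vitali covering lemma together with $|B(x,5r)|=5^Q|B(x,r)|$; density of $C_c$ in $L^1$ with localisation to $\Omega$; and the countable dense set $\{q_j\}\subset\R^m$, which passes from the scalar $L^1$ case to general $p$ and vector values. The paper gives no argument beyond invoking the doubling property of $(\G,d,\mathcal L^n)$ via \cite{magnani}, and your proof is the standard doubling-space argument behind that citation, so it is essentially the same approach.
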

A point $x\in\Omega$ satisfying \eqref{eq_lebesguepoint} is called a \textit{Lebesgue point} of $u\in L^p(\Omega,\R^m)$. In particular, Lebesgue points are points of \textit{approximate continuity} for the function $u$.

 \subsection{Convolution on Carnot groups}
 \begin{definition}[Approximation to the identity]\label{def_approximation}
     An \textit{approximation to the identity} in $\G$ is a family of functions $(K_\eps)_\eps\subset L^1(\G)$, $\eps>0$, such that
     \begin{enumerate}
         \item [(i)] The family $(K_\eps)_\eps$ is bounded in $L^1(\G)$, that is
         \[\sup_{\eps>0} \|K_\eps\|_{L^1(\G)}<+\infty;\]
         \item[(ii)]For every $\eps>0$ it holds\[\int_\G K_\eps(x)\,dx=1;\]
         \item [(iii)] For any $\delta>0$
         \[\lim_{\eps\to 0}\int_{\G\setminus B(\delta)}|K_\eps(x)|\,dx=0.\]
     \end{enumerate}
 \end{definition}
 \begin{example}\label{example}
     The simplest and most frequent example of an approximation to the identity in $\G$ can be constructed by considering the family
     \[K_\eps(x):=\frac{1}{\eps^Q}\phi(\delta_\frac{1}{\eps}(x)),\]
    where $\phi\in L^1(\G)$ is such that $\int_\G \phi(x)\,dx=1$.  \medskip\\
    If in addition $\phi\in C^{\infty}_c(\G)$ is such that $0\leq \phi\leq 1$, $\phi(x^{-1})=\phi(x)$ and ${\rm spt}\,\phi\subset B(0,1)$, then the corresponding $(K_\eps)_\eps$ form a family of intrinsic mollifiers on $\G$. 
 \end{example}

 \begin{definition}[Convolution between functions]
     If $f$ and $g$ are Borel functions on $\G$, their \textit{ intrinsic convolution} is defined by 
     \[(f\ast g)(x):=\int_\G f(x\cdot y^{-1})\,g(y)\,dy=\int_\G f(z)\,g(z^{-1}\cdot x)\,dz\]
     provided that the integrals above converge. If $f$ (or $g$) is a vector-valued map, then the intrinsic convolution is simply defined componentwise.
     \end{definition}
     
     \begin{proposition}\label{young}{\rm {(Young convolution inequality, see \cite[Proposition 1.18]{folland})}}\\
         Assume $1\leq p,q,r\leq+\infty$ and $p^{-1}+q^{-1}=r^{-1}+1$. If $f\in L^p(\G)$ and $g\in L^q(\G)$, then $f\ast g\in L^r(\G)$ and 
         \[\|f\ast g\|_{L^r(\G)}\leq \|f\|_{L^p(\G)}\|g\|_{L^q(\G)}.\]
     \end{proposition}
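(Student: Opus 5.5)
The plan is to follow the standard proof of Young's inequality for locally compact unimodular groups, using only two facts about $\G$: the Haar measure $\mathcal L^n$ is left- and right-invariant, and inversion $y\mapsto y^{-1}=-y$ preserves $\mathcal L^n$. These give $\int_\G u(x\cdot y^{-1})\,dx=\int_\G u(x)\,dx$ and $\int_\G u(z^{-1}\cdot x)\,dz=\int_\G u(z)\,dz$ for every $u\ge 0$ measurable. Hence for each fixed $x$ the maps $y\mapsto f(x\cdot y^{-1})$ and $z\mapsto g(z^{-1}\cdot x)$ have the same $L^s$-norms as $f$ and $g$. For a vector-valued map we argue componentwise, or directly with $|f|$. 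We may assume $f,g\ge 0$, since $|f\ast g|\le |f|\ast|g|$.

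First the endpoint cases. If $r=\infty$, then $q=p'$, and Hölder's inequality applied to $y\mapsto f(x\cdot y^{-1})\,g(y)$, together with the invariance above, gives $|f\ast g(x)|\le \|f\|_{L^p}\|g\|_{L^{p'}}$ for every $x$. If $p=1$ (so $q=r$), Minkowski's integral inequality applied to $(f\ast g)(x)=\int f(z)\,g(z^{-1}\cdot x)\,dz$ gives $\|f\ast g\|_{L^r}\le \int f(z)\,\|g(z^{-1}\cdot\,)\|_{L^r}\,dz=\|f\|_{L^1}\|g\|_{L^r}$, using left invariance. The case $q=1$ is symmetric, using the representation $\int f(x\cdot y^{-1})\,g(y)\,dy$ and right invariance.

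For the general case $1<p,q,r<\infty$, we use the classical three-factor Hölder splitting
\[
f(z)\,g(z^{-1}x)=\bigl(f(z)^{p}g(z^{-1}x)^{q}\bigr)^{1/r}\,f(z)^{p(1/p-1/r)}\,g(z^{-1}x)^{q(1/q-1/r)},
\]
with Hölder exponents $r$, $pr/(r-p)$ and $qr/(r-q)$. These satisfy $\tfrac1r+\tfrac{r-p}{pr}+\tfrac{r-q}{qr}=1$ exactly because $p^{-1}+q^{-1}=r^{-1}+1$. Integrating in $z$ and using the invariance of Haar measure on the last two factors gives
\[
|f\ast g(x)|^r\le \|f\|_{L^p}^{r-p}\,\|g\|_{L^q}^{r-q}\int f(z)^p\,g(z^{-1}x)^q\,dz.
\]
Integrating in $x$ and applying Tonelli's theorem together with left invariance, $\int g(z^{-1}x)^q\,dx=\|g\|_q^q$, yields $\|f\ast g\|_{L^r}^r\le \|f\|_{L^p}^r\|g\|_{L^q}^r$. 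Measurability of $(x,z)\mapsto f(z)\,g(z^{-1}x)$ follows because the group law is a polynomial map of $\R^n$, and this also justifies the use of Tonelli's theorem.

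The only genuinely group-specific point, and the step to verify carefully, is the invariance of $\mathcal L^n$ under right translations and under inversion. Right invariance is stated in the preliminaries. Inversion-invariance is immediate since $x^{-1}=-x$ in exponential coordinates. Everything else is the Euclidean argument verbatim.
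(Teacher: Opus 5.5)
Your proof is correct and matches the paper's approach: the paper gives no argument of its own and simply cites Folland--Stein, where the proof is the Euclidean one with the group entering only through the left/right invariance of $\mathcal L^n$ and its invariance under $y\mapsto y^{-1}$, which are exactly the ingredients you use. One justification should be tightened: measurability of $(x,z)\mapsto f(z)\,g(z^{-1}\cdot x)$ does not follow from the group law being polynomial alone, but from $(x,z)\mapsto(z^{-1}\cdot x,z)$ being a measure-preserving diffeomorphism of $\R^{2n}$, or simply from taking Borel representatives as the paper's definition of convolution does.
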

     \begin{lemma}[Properties of convolution]\label{propconv1}
         Let $(K_\eps)_\eps$ be an approximation to the identity on $\G$ and let also $\eta\in C^\infty_c(\G)$. 
         \begin{enumerate}
             \item [(i)] If $f\in L^p(\G)$ for $1\leq p<+\infty$, then
             \[\|K_\eps\ast f-f\|_{L^p(\G)}\to 0\quad\text{and}\quad \|f\ast K_\eps-f\|_{L^p(\G)}\to0;\]
             \item [(ii)] If $f\in C^0_b(\G)$, then $K_\eps\ast f\to f$  (and $f\ast K_\eps\to f)$ uniformly on compact sets;
             \item[(iii)] If $f\in W^{1,p}_\G(\G)$, then $\eta\ast f\in C^{\infty}(\G)$ and 
             \[\nabla_\G({\eta}\ast f)={\eta}\ast \nabla_\G f;\]
             \item[(iv)] If $f\in L^p(\G)$ and $K\in L^1(\G)$,
             \[({\eta}\ast f)\ast K={\eta}\ast (f \ast K).\]
         \end{enumerate}
     \end{lemma}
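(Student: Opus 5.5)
We prove the four items of Lemma \ref{propconv1} by adapting the Euclidean arguments to the left-invariant structure. The three properties we rely on are: the Haar measure is bi-invariant, inversion $x\mapsto x^{-1}=-x$ preserves $\mathcal L^n$, and the vector fields $X_i$ are left-invariant.

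\emph{Item (i).} First we treat $K_\eps\ast f$. Using (ii) of Definition \ref{def_approximation} we write
\[K_\eps\ast f(x)-f(x)=\int_\G K_\eps(y)\bigl(f(y^{-1}x)-f(x)\bigr)\,dy .\]
Minkowski's integral inequality then gives
\[\|K_\eps\ast f-f\|_{L^p}\le\int_\G|K_\eps(y)|\,\omega(y)\,dy, \qquad \omega(y):=\|f(y^{-1}\cdot)-f\|_{L^p}.\]
The modulus $\omega$ is bounded by $2\|f\|_{L^p}$. It also tends to $0$ as $y\to e$: this holds for $f\in C_c(\G)$ by uniform continuity and the compactness of supports, and extends to all of $L^p$ by density. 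We split the integral into the parts over $B(\delta)$ and over $\G\setminus B(\delta)$. The first part is controlled by $\sup_\eps\|K_\eps\|_{L^1}\,\sup_{B(\delta)}\omega$, using (i). The second part tends to $0$ as $\eps\to0$ by (iii).

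For $f\ast K_\eps$ we write $f\ast K_\eps(x)-f(x)=\int_\G K_\eps(y)\bigl(f(xy^{-1})-f(x)\bigr)\,dy$. The argument is identical, with right translations in place of left ones. Continuity of right translations in $L^p$ at $e$ holds in the same way, since Lebesgue measure is also right-invariant.

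\emph{Item (ii).} We use the same splitting. For $x$ in a compact set $C$ and $y\in B(\delta)$, the points $y^{-1}x$ stay in a compact set. Hence the uniform continuity of $f$ there, combined with $\|f\|_\infty$ and (iii) on the complement of $B(\delta)$, gives uniform convergence on $C$. The right-convolution version is proved the same way.

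\emph{Item (iii).} Write $\eta\ast f(x)=\int_\G\eta(xy^{-1})f(y)\,dy$. Differentiation under the integral sign is justified because $\eta\in C^\infty_c$ and $f\in L^1_{\rm loc}$, locally uniformly in $x$. This shows $\eta\ast f\in C^\infty$ and $X_i(\eta\ast f)=(X_i\eta)\ast f$, because left-invariant fields commute with the convolution in the first slot, i.e.\ they act on $x\mapsto\eta(xy^{-1})$. This step is the main obstacle: we must move the derivative from $\eta$ to $f$, and it is $\eta$, not $f$, that carries the $x$-dependence. We handle it as follows. Since $X_i$ is left-invariant, $X_i^{(x)}[\eta(xy^{-1})]=-\widetilde X_i^{(y)}[\eta(xy^{-1})]$, where $\widetilde X_i$ is the right-invariant field agreeing with $X_i$ at $e$. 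This is not directly $X_i$ in $y$.

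To get around this we use the other representation
\[\eta\ast f(x)=\int_\G f(xz^{-1})\,\eta(z)\,dz ,\]
in which $x$ enters through $f$. For fixed $z$, $X_i$ applied to $x\mapsto f(xz^{-1})$ equals $(X_if)(xz^{-1})$ in the weak sense, because right translation commutes with left-invariant fields. Now test against $\varphi\in C^1_c$ and use Fubini, together with the fact that the weak derivative commutes with right translations; the latter follows by changing variables in the definition of weak derivative. This yields
\[X_i(\eta\ast f)=\eta\ast X_if\]
weakly. Both sides are continuous, so the identity holds classically.

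\emph{Item (iv).} This is the associativity of convolution. We use Fubini and the change of variables $y\mapsto yw$ in $\int\int\eta(xy^{-1})f(yw^{-1})K(w)\,dw\,dy$. Absolute convergence follows from Young's inequality (Proposition \ref{young}): $|\eta|\ast(|f|\ast|K|)$ is finite a.e.
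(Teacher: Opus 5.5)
Your items (i), (ii) and (iv) are correct. They are the standard Euclidean arguments that the paper delegates to Folland--Stein and Vittone.

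Item (iii), however, swaps left and right translations, and this makes the steps you rely on false in a non-abelian group.

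First, the representation is wrong. With the paper's convention, the substitution $z=xy^{-1}$ gives
\[
(\eta\ast f)(x)=\int_\G\eta(xy^{-1})f(y)\,dy=\int_\G\eta(z)\,f(z^{-1}x)\,dz,
\]
whereas $\int_\G f(xz^{-1})\eta(z)\,dz$ is $(f\ast\eta)(x)$.

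Second, left-invariant fields commute with left translations, not right ones. In $\hh^1$, take $f(x)=x_3$, fix $w$ and set $g_w(x):=f(x\cdot w)$. Then $X_1g_w(x)=\tfrac{w_2-x_2}{2}$, while $(X_1f)(x\cdot w)=-\tfrac{x_2+w_2}{2}$. Taken literally, your argument proves $X_i(f\ast\eta)=(X_if)\ast\eta$, which is false; the correct identity is $X_i(f\ast\eta)=f\ast X_i\eta$.

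The same confusion affects your preliminary claims:
\begin{itemize}
\item The identity $\widetilde X_i(\eta\ast f)=(\widetilde X_i\eta)\ast f$ holds for the right-invariant fields, not with $X_i$ in place of $\widetilde X_i$.
\item Since $x\,(y\exp(tX_i))^{-1}=x\exp(-tX_i)\,y^{-1}$, one has $X_i^{(x)}[\eta(xy^{-1})]=-X_i^{(y)}[\eta(xy^{-1})]$. The field acting in $y$ is the \emph{left}-invariant one, not $\widetilde X_i^{(y)}$.
\end{itemize}

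That last identity shows the ``main obstacle'' does not exist. For fixed $x$, the function $\psi_x(y):=\eta(xy^{-1})$ lies in $C^\infty_c(\G)$ and satisfies $X_i\psi_x(y)=-X_i^{(x)}[\eta(xy^{-1})]$. Differentiate under the integral sign and use $\psi_x$ as the test function in the definition of the weak derivative. This gives $X_i(\eta\ast f)(x)=\int_\G\eta(xy^{-1})\,X_if(y)\,dy=(\eta\ast X_if)(x)$ for every $x$.

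Alternatively, your second route works once you use $\int_\G\eta(z)f(z^{-1}x)\,dz$. There $x$ enters through a left translation of $f$, which does commute with weak $X_i$-derivatives.

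Corrected either way, your proof of (iii) is direct. The paper instead cites the smooth case and reaches Sobolev functions through the approximation of Theorem \ref{meyersserrin}.
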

     \begin{proof}
     The proof of point (i) can be found in \cite[Proposition 1.20, (i)]{folland} and \cite[Proposition 1.28, (i)]{vittone} in the case $K_\eps$ has the form of Example \ref{example}; however a similar argument also works for a general approximation to the identity. The same holds for property (ii) (\cite[Proposition 1.20, (iii)]{folland} and \cite[Proposition 1.28, (v)]{vittone}). Concerning point (iii), see \cite{folland}, \cite[Proposition 1.28]{vittone} or \cite[Proposition 2.14]{comi} for the smooth case, while the Sobolev case can be obtained by a standard approximation argument.
     Item (iv) follows by an explicit computation. \end{proof}
     \begin{definition}[Convolution between vector measures and functions]
         Assume $\nu$ is a $\R^m$-valued Radon measure on $\G$. Let $f:\G\to\overline \R$ be a Borel function. Then the \textit{intrinsic convolution} between $\nu$ and $f$ (or between $f$ and $\nu$) is given by the (vector) function
         \[\begin{split}(\nu\ast f)(x):=\int_\G f(y^{-1}\cdot x)\, d\nu(y),\\ (f\ast \nu)(x):=\int_\G f(x\cdot y^{-1})\,d\nu(y),
         \end{split}\]
         assuming that the above integrals make sense. 
     \end{definition}We now recall the following definition of weak convergence for Radon measures.
\begin{definition}[Weak convergence of measures]
Let $(\mu_h)_{h\in\mathbb N}$ and $\mu$ be signed Radon measures on $\G$. We say that $\mu_h$  \textit{locally weakly$^*$ converges} to $\mu $, and we write $\mu_h\rightharpoonup\mu$, if for every $\varphi\in C^0_c(\G)$ it holds
\[\int_\G\varphi\,d\mu_h\longrightarrow \int_\G\varphi\,d\mu\quad\text{ as }h\to\infty.\]
We denote by $\mathcal M(\G)$ the space of signed Radon measures on $\G$, while we use the notation $\mathcal M(\G,\R^{m})$ for the space of $\,\R^{m}$-valued Radon measures.
Finally, if $\mu_h,\,\mu\in \mathcal M(\G,\R^{m})$, we write $\mu_h\rightharpoonup\mu$ if the convergence holds componentwise.
\end{definition}
     \begin{lemma}[Properties of convolution of measures]\label{propconv2} Let $\nu$ be a finite $\R^m$-valued Radon measure on $\G$. Let $(K_\eps)_\eps$ be an approximation to the identity on $\G$ and let also $\eta\in C^\infty_c(\G)$. 
         \begin{enumerate}
             \item[(i)]  If $f\in L^p(\G)$ for $1\leq p\leq +\infty$, then $\nu\ast f$ is well defined a.e., $\nu\ast f\in L^p(\G)$ and
             \[\|\nu\ast f\|_{L^p(\G)}\leq \|f\|_{L^p(\G)}\,|\nu|(\G).\]
             The same holds for $f\ast \nu$;
             \item[(ii)] If $f\in C^0_b(\G)$, then also $\nu\ast f\in C^0_b(\G)$ and $f\ast \nu\in C^0_b(\G)$;
             \item[(iii)] It holds
             \[\nu\ast K_\eps\rightharpoonup\nu\quad\text{and}\quad K_\eps\ast \nu\rightharpoonup \nu\,;\]
              Moreover, if $\|K_\varepsilon\|_{L^1(\G)}=\,1$ for each $\varepsilon>\,0$, it also holds
             \[
             |\nu|(\G)=\,\lim_{\eps\to 0}\|\nu\ast K_\eps\|_{L^1(\G)}=\,\lim_{\eps\to 0}\|K_\eps\ast\nu \|_{L^1(\G)}\,.
             \]
             
             \item[(iv)] If $f\in BV_\G(\G)$, then $\eta\ast f\in C^{\infty}(\G)$ and
             \[\nabla_\G(\eta\ast f)=\eta\ast D_\G f;\]
             \item [(v)] For $K\in L^1(\G)$, it holds \[(\eta\ast\nu)\ast K=\eta\ast (\nu \ast K).\]
         \end{enumerate}
     \end{lemma}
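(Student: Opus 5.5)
The five items follow the pattern of Lemma \ref{propconv1}, with Fubini and Minkowski-type arguments replacing Young's inequality. Throughout I use the Haar measure's left and right invariance and its invariance under inversion, the latter because $x\mapsto x^{-1}=-x$ has unit Jacobian.

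\emph{Item (i).} I would first reduce to $|\nu|$: since $|(\nu\ast f)(x)|\le (|\nu|\ast|f|)(x)$, it suffices to treat a positive finite measure $\mu=|\nu|$ and $f\ge 0$. By Minkowski's integral inequality,
\[
\|\mu\ast f\|_{L^p}\le \int_\G \|f(y^{-1}\cdot\,\cdot)\|_{L^p}\,d\mu(y)=\|f\|_{L^p}\,\mu(\G),
\]
using left invariance. For $p=\infty$ the bound is trivial. Finiteness of the $L^p$ norm gives finiteness a.e., hence well-definedness a.e. The case $f\ast\nu$ is identical, using right invariance.

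\emph{Item (ii).} This is dominated convergence. As $x_k\to x$, continuity of the group law gives $f(y^{-1}x_k)\to f(y^{-1}x)$ for every $y$. These integrands are bounded by $\|f\|_\infty$, which is $|\nu|$-integrable because $\nu$ is finite.

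\emph{Item (iii).} For $\varphi\in C^0_c(\G)$, Fubini gives
\[
\int\varphi\, d(K_\eps\ast\nu)=\int_\G \int_\G \varphi(z)K_\eps(z\cdot y^{-1})\,dz\,d\nu(y)=\int_\G(\varphi\ast \check K_\eps)(y)\,d\nu(y),
\]
with a suitable reflected kernel $\check K_\eps(w)=K_\eps(w^{-1})$, up to the order of the factors. I would check that $\check K_\eps$ is again an approximation to the identity; this holds because $N(w^{-1})=N(w)$ and inversion preserves measure. Then Lemma \ref{propconv1}(ii) gives $\varphi\ast\check K_\eps\to\varphi$ uniformly on compact sets, and also globally, since $\varphi$ is uniformly continuous and the tails of $K_\eps$ vanish. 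Dominated convergence against the finite measure $\nu$ then concludes.

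For the mass identity, lower semicontinuity of total variation under weak$^*$ convergence gives $|\nu|(\G)\le\liminf\|K_\eps\ast\nu\|_{L^1}$. The reverse bound $\|K_\eps\ast\nu\|_{L^1}\le \|K_\eps\|_{L^1}|\nu|(\G)=|\nu|(\G)$ follows as in (i). The step needing most care is lower semicontinuity with test functions in $C^0_c$ only: I would take the supremum over $\varphi\in C_c^0(\G,\R^m)$ with $|\varphi|\le1$, which recovers $|\nu|(\G)$ because $\nu$ is a Radon measure.

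\emph{Items (iv) and (v).} For (iv), smoothness of $\eta\ast f$ comes from differentiating under the integral in $(\eta\ast f)(x)=\int f(y)\eta(y^{-1}x)\,dy$, which is allowed since $\eta$ has compact support and $f\in L^1$. By left invariance of $X_i$, $X_i(\eta\ast f)=\eta\ast X_if$ in the smooth case. In general I would write
\[
X_i(\eta\ast f)(x)=\int f(z)\,X_i[\eta(z^{-1}\cdot x)]\,dz .
\]
The function $z\mapsto \eta(z^{-1}x)$ is compactly supported and smooth, and differentiation in $x$ along $X_i$ can be converted into $-X_i$ acting on the variable $z$. This uses that the right-translation structure gives $X_i$ acting on $\eta(x\cdot y^{-1})$ as $X_i\eta$, so I would write the convolution in the form $\int \eta(x y^{-1}) f(y)\,dy$ to apply this. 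Then the definition of $D_\G f$ from Theorem \ref{structurethm} yields $\int\eta(xy^{-1})\,dD_\G f(y)=(\eta\ast D_\G f)(x)$. The main obstacle is getting this left/right bookkeeping correct, namely which vector fields commute with which translations.

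Item (v) is Fubini plus the associativity computation $\int\!\int\eta(x w^{-1}y^{-1})\,d\nu(y)\,K(w)\,dw$. All integrals converge absolutely because $\eta$ is bounded, $\nu$ is finite and $K\in L^1$.
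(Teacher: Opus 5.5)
Items (i), (ii), (iii) and (v) are correct. They follow the same route as the arguments the paper cites: Minkowski's integral inequality with left/right invariance of Haar measure, dominated convergence, duality against the reflected kernel $\check K_\eps$ plus lower semicontinuity of the total variation and (i), and Fubini.

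The gap is in (iv), exactly at the left/right bookkeeping you flag as the main obstacle. You do not resolve it, and each of the identities you invoke there is wrong.

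With the paper's definition, $(\eta\ast f)(x)=\int_\G\eta(x\cdot y^{-1})f(y)\,dy=\int_\G\eta(z)f(z^{-1}\cdot x)\,dz$. So the formula $\int_\G f(y)\eta(y^{-1}\cdot x)\,dy$ you start from is $f\ast\eta$, not $\eta\ast f$. This is harmless for smoothness but fatal for the derivative formula.

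In that form, $X_i$ in $x$ does \emph{not} become $-X_i$ in $z$. One has $X_i^x[\eta(z^{-1}x)]=(X_i\eta)(z^{-1}x)$, whereas $X_i^z[\eta(z^{-1}x)]=-(\widetilde X_i\eta)(z^{-1}x)$, where $\widetilde X_i$ is the right-invariant field agreeing with $X_i$ at $0$. That route produces $f\ast X_i\eta$, not $\eta\ast D_\G f$.

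Your justification for switching forms is also false. You claim $X_i$ acts on $x\mapsto\eta(x\cdot y^{-1})$ as $X_i\eta$, but this map is a right translate of $\eta$, and $X_i$ commutes only with left translations. In $\hh^1$, for instance, $X_1[\eta(x\cdot a)]-(X_1\eta)(x\cdot a)=a_2\,\partial_3\eta(x\cdot a)$.

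The missing fact is that the flow of the left-invariant field $X_i$ is right multiplication by $\exp(tX_i)$, and $x\exp(tX_i)\,y^{-1}=x\,(y\exp(-tX_i))^{-1}$. Hence
\[
X_i^x[\eta(x\cdot y^{-1})]=\frac{d}{dt}\Big|_{t=0}\eta\big(x\exp(tX_i)\,y^{-1}\big)=-X_i^y[\eta(x\cdot y^{-1})],
\]
even though in general neither side equals $(X_i\eta)(x\cdot y^{-1})$. Since $y\mapsto\eta(x\cdot y^{-1})$ lies in $C^\infty_c(\G)$, you can differentiate under the integral and apply Theorem \ref{structurethm} componentwise:
\[
X_i(\eta\ast f)(x)=-\int_\G f(y)\,X_i^y[\eta(x\cdot y^{-1})]\,dy=\int_\G\eta(x\cdot y^{-1})\,d(D_\G f)_i(y)=(\eta\ast D_\G f)_i(x).
\]

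Alternatively, keep your correct smooth-case identity $X_i(\eta\ast g)=\eta\ast X_ig$, which comes from the form $\int_\G\eta(z)g(z^{-1}\cdot x)\,dz$ and left invariance. Then pass to the limit along the sequence $(f_h)_h$ of Theorem \ref{anzellotti}, using $\nabla_\G f_h\,\mathcal L^n\rightharpoonup D_\G f$ tested against $y\mapsto\eta(x\cdot y^{-1})\in C^0_c(\G)$. The paper itself only cites \cite[Lemma 3.10]{comi} for this item.
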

     \begin{proof} 
     The first item can be proved as in the Euclidean case, see \cite[Proposition 8.49]{folland99}. Point (ii) follows by a direct computation using Lebesgue's dominated convergence theorem. Property (iii) is contained in \cite[Remark 2.12]{comi} for the case of mollifiers, but the argument can be extended to general approximations to the identity. The convergence of the total variation follows from the lower semicontinuity of the total variation w.r.t. the weak convergence of measures (see, for instance, \cite[Theorem 1.59]{AFP}) and the previous property (i).
     Point (iv) is contained in \cite[Lemma 3.10]{comi}, while the last item follows by an explicit computation.
     \end{proof}

\section{Nonlocal approximations of the horizontal gradient}\label{section4}
\noindent 
In this section we extend to the setting of Carnot groups an interesting representation formula which was recently established by Brezis and Mironescu \cite{brezis} in the Euclidean case. As a consequence, we derive a nonlocal approximation theorem for the horizontal gradient, which is of independent interest and will be used in the next section. This asymptotic estimate falls within the BBM framework. We also derive a characterization of Sobolev and BV functions on $\G$ in terms of boundedness of non-local energies, which is independent of the chosen homogeneous norm and generalizes a result of Barbieri \cite{barbieri}.\medskip\\
From now on, we assume that $\G$ is a fixed Carnot group of homogeneous dimension $Q$, endowed with a homogeneous norm $N$. 
In the following, unless otherwise specified, we consider a family of functions $(\rho_\eps)_\eps$ on $\G$ satisfying 
$\eqref{P1}\div\eqref{P4}$.
    In particular, 
    $(\rho_\varepsilon)_\varepsilon$ is a positive and radial approximation to the identity (compare with Definition \ref{def_approximation}).
 \begin{remark} Observe that, for a fixed homogeneous norm $N$ on $\G$, properties \eqref{P1}$\div$ \eqref{P4} on the family of mollifiers $(\rho_\eps)_\eps$
can be equivalently read in terms of their profiles $(\tilde\rho_\eps)_\eps$ according to the integration formula in polar coordinates \eqref{iwrtradfctpoco}:
\begin{equation}\label{tildeP1}\tag{$\widetilde {P1}$}
\tilde\rho_\eps:\,[0,+\infty)\to [0,+\infty) \text{ is a Borel map};
\end{equation}
\begin{equation}\label{tildeP3}\tag{$\widetilde {P3}$}
\int_0^\infty\tilde\rho_\eps(r)\,r^{Q-1}\,dr=\,\sigma(S_\G)^{-1};
\end{equation}
\begin{equation}\label{tildeP4}\tag{$\widetilde {P4}$}
\lim_{\eps\to 0}\int_\delta^\infty\tilde\rho_\eps(r)\,r^{Q-1}\,dr=\,0\text{ for each } \delta>\,0.
\end{equation}
Notice also that assumption \eqref{P3} can be often weakened to the following:
\begin{equation}\label{P3'}\tag{P3'}
\exists \,C>0: \int_\G \rho_\eps(h)\,dh= C\quad\text{for every }\eps>0.\end{equation}
Indeed, assuming \eqref{P3'}, one can construct a family of mollifiers satisfying \eqref{P3} simply by normalization, while preserving all the other properties. In particular, Theorem \ref{firstcharacterization} and Theorem \ref{secondcharBVSobfunct} remain valid under assumption \eqref{P3'} replacing the stronger \eqref{P3}, with the only difference that in this case $V_\eps(f)\to C\,\nabla_\G f$ in $L^p(\G)$ when $f\in W^{1,p}_\G(\G)$, while $V_\eps(f)\to C\, D_\G f$ in $\mathcal M(\G,\mathbb R^{m_1})$ if $f\in BV_\G(\G).$
\end{remark}
We now provide some interesting examples of mollifiers $\rho_\eps$ arising from the existing literature:
\begin{example}\label{Examples}(Examples of mollifiers) \\
The prototype example of a family of mollifiers $(\rho_\eps)_\eps$ satisfying assumptions $\eqref{P1}\div\eqref{P4}$ is the following:
\[\rho_\eps(h):=\frac{\chi_{B(0,\eps)}(h)}{|B(0,\eps)|}.\]
Calderón-Zygmund differentiability and the original definition of $L^p$-Taylor approximation by Spector \cite{spector} are indeed modeled on a family $(\rho_\eps)_\eps$ of this form (see Section 4 and the references therein). 
Another significant example of a family of mollifiers satisfying assumptions $\eqref{P1}\div\eqref{P4}$ is given by 
\[\rho_\eps(h):=\frac{c_\eps \chi_{B(0,R)}(h)}{N(h)^{Q-\eps p}},\]
where $c_\eps:=\frac{\eps p}{R^{\eps p}\sigma(S_\G)}$ and $R,p>0$ are fixed. 
This class of kernels has been widely used in the literature, since it appears in the definition of the fractional Gagliardo seminorms and it was the initial motivation for the study of BBM-type estimates (see \cite{BBM}). \end{example}
We now deal with the definition and properties of the nonlocal gradients $V_\eps(f)$ and the related functionals presented in the introduction.
\begin{remark} 
   By the triangular inequality, if $N$ is a homogeneous norm, for every $x,y\in\G$
    \[|N(x)-N(y)|\leq N(y^{-1}x)=d_N(x,y).\]
    Hence $N:(\G,d_N)\to \R$ is 1-Lipschitz continuous. In particular, by Pansu-Rademacher theorem (Theorem \ref{pansuthm}), $N$ is P-differentiable a.e. on $\G$. Moreover, $\nabla_\G N$ exists in distributional sense and $\nabla_\G N\in L^\infty (\G,H\G)$ (see, for instance, \cite[Theorem 1.3] {GN98}.) 
\end{remark} \begin{remark}\label{NeuclidthenBrezis} If $N$ is the standard Euclidean norm on $\R^n$, then \[\nabla_\G N(h)=\nabla N(h)=\frac{h}{N(h)}\] for all points $h\neq 0$ and the nonlocal gradient $V_\eps(f)$ exactly reduces to the expression used in \cite{mengesha} and then in \cite{brezis}.\end{remark}
 
\begin{remark}\label{eikeqccnor} If $N=\|\cdot\|_c$ denotes the homogeneous norm induced by the CC distance on $\G$, then equation \eqref{eikonaleq} is satisfied a.e. (see \cite[Theorem 3.1]{montiserra}) and, in particular, $I_{\eps, p}(f)=I^\ast_{\eps,p}(f)$.
\end{remark}


  We now deal with the existence of $V_\eps(f).$ 
 It is immediate that the following estimate holds.
 \begin{lemma}\label{firstestimate}
     Let $f\in L^1_{\rm loc}(\G)$ be such that $\widetilde V_\eps(f)\in L^1_{\rm loc}(\G)$. Then $V_\eps(f)$ is well-defined a.e. and is measurable. Moreover we have
     \[|V_\eps (f)|\leq Q\,\widetilde V_\eps (f)\qquad\text{a.e. in }\G.\]
     In particular also $V_\eps(f)\in L^{1}_{\rm loc}(\G).$ 
     \end{lemma}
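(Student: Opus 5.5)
The plan is to reduce everything to absolute integrability of the integrand in \eqref{Veps}, followed by a Fubini/Tonelli measurability argument. Write
\[
F(x,h):=\frac{f(x\cdot h)-f(x)}{N(h)}\,\nabla_\G N(h)\,\rho_\eps(h),
\]
defined for $h\neq 0$. Since $\{0\}$ is $\mathcal L^n$-null, it can be ignored in all integrals.

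First I check that $F$ is a Borel (or at least $\mathcal L^{2n}$-measurable) map on $\G\times\G$. Take a Borel representative of $f$. The map $(x,h)\mapsto x\cdot h$ is smooth, hence continuous. The section $\nabla_\G N$ is a measurable bounded function by the remark preceding the statement, and $\rho_\eps$ is Borel by \eqref{P2}. So $(x,h)\mapsto f(x\cdot h)$ is Borel, and $F$ is a product and quotient of measurable functions. The only subtlety is that $\nabla_\G N$ is defined only a.e. in $h$. Changing it on an $h$-null set changes $F$ only on a product null set, so this does not affect the $dh$-integral for any $x$.

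Next comes the pointwise bound. For every $(x,h)$ we have
\[
|F(x,h)|=\frac{|f(x\cdot h)-f(x)|}{N(h)}\,|\nabla_\G N(h)|\,\rho_\eps(h),
\]
where $|\cdot|$ is the Euclidean norm in $\R^{m_1}$, which agrees with $|\cdot|_x$ under the identification. Therefore $\int_\G|F(x,h)|\,dh=\widetilde V_\eps(f)(x)$ for every $x$.

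The hypothesis $\widetilde V_\eps(f)\in L^1_{\rm loc}(\G)$ gives $\widetilde V_\eps(f)(x)<\infty$ for a.e. $x$. For such $x$, the function $h\mapsto F(x,h)$ is in $L^1(\G,\R^{m_1})$, so $V_\eps(f)(x)=Q\int_\G F(x,h)\,dh$ is well defined. Integrating componentwise and using $\bigl|\int_\G F\,dh\bigr|\le\int_\G|F|\,dh$ for vector-valued integrals yields $|V_\eps(f)(x)|\le Q\,\widetilde V_\eps(f)(x)$.

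For measurability, fix a compact set $K$. Tonelli applied to $|F|\chi_{K}(x)$ shows that $F\in L^1(K\times\G)$, because $\int_K\widetilde V_\eps(f)\,dx<\infty$. Fubini then gives that $x\mapsto\int_\G F(x,h)\,dh$ is measurable on $K$ (defined a.e.). Exhausting $\G$ by countably many compact sets gives measurability on $\G$. The local integrability of $V_\eps(f)$ then follows directly from the pointwise bound.

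The only point needing care is the joint measurability of $(x,h)\mapsto f(x\cdot h)$, i.e. that composing with the group law does not destroy measurability. This holds because $(x,h)\mapsto(x,x\cdot h)$ is a diffeomorphism of $\R^{2n}$ with Jacobian $1$ (Haar measure is Lebesgue). Hence preimages of null sets are null, and a.e.-defined $f$ yields an a.e.-defined measurable composition.
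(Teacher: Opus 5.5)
Your argument is correct and is essentially the paper's: the paper simply calls the lemma immediate, i.e.\ it rests on the bound $\bigl|\int_\G F(x,h)\,dh\bigr|\le\int_\G|F(x,h)|\,dh=\widetilde V_\eps(f)(x)$ at every $x$ where $\widetilde V_\eps(f)(x)<\infty$. Your Tonelli/Fubini treatment of joint measurability fills in details the paper leaves implicit.
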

We now present a sufficient condition to ensure that $\widetilde V_\eps(f)\in L^1_{\rm loc}(\G)$ (and thus, by Lemma \ref{firstestimate}, also that $V_\eps (f)\in L^1_{\rm loc}(\G)$).

\medskip
\begin{lemma}\label{secondestimate}
   Let $1\leq p<+\infty$ and $f\in L^1_{\rm loc}(\G)$. Let also $\rho_\eps$ satisfy \eqref{P1} and \eqref{P2}. Then
   \[\|\widetilde V_\eps(f)\|^p_{L^p(\G)}\leq \|\rho_\eps\|_{L^1(\G)}^{p-1} I_{\eps,p}(f).\]
   Consequently, if $I_{\eps, p}(f)<+\infty$, then $V_\eps(f)(x)$ is well defined a.e. and 
   \[\|V_\eps(f)\|_{L^p(\G)}\leq Q\,\|\rho_\eps\|_{L^1(\G)}^{(p-1)/p}\, I_{\eps,p}(f)^{1/p}\]
\end{lemma}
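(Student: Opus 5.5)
The plan is to apply Hölder's inequality pointwise in $x$, using the probability-like measure $\rho_\eps(h)\,dh$ as weight, and then integrate in $x$.

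Fix $x\in\G$. Write
\[
\widetilde V_\eps(f)(x)=\int_\G g_x(h)\,\rho_\eps(h)\,dh,\qquad g_x(h):=\frac{|f(x\cdot h)-f(x)|}{N(h)}\,|\nabla_\G N(h)|\ge 0 .
\]
For $p=1$ the claimed inequality is an identity after integrating in $x$, since $\|\rho_\eps\|_{L^1(\G)}^{0}=1$. For $p>1$ we apply Hölder's inequality with exponents $p$ and $p'=p/(p-1)$ to the splitting $\rho_\eps=\rho_\eps^{1/p'}\rho_\eps^{1/p}$. This is legitimate by \eqref{P1}, since $\rho_\eps\ge0$. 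We obtain
\[
\widetilde V_\eps(f)(x)^p\le \Big(\int_\G\rho_\eps(h)\,dh\Big)^{p-1}\int_\G g_x(h)^p\,\rho_\eps(h)\,dh .
\]
This holds in $[0,+\infty]$ for every $x$, since all integrands are nonnegative.

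Next we integrate in $x$ over $\G$. The integrand $(x,h)\mapsto g_x(h)^p\rho_\eps(h)$ is nonnegative and measurable on $\G\times\G$. Indeed, $(x,h)\mapsto f(x\cdot h)$ is measurable because the group law is smooth (a diffeomorphism in each variable) and preserves Lebesgue null sets. The functions $N$, $\nabla_\G N$ and $\rho_\eps$ are Borel; here $\rho_\eps$ is Borel by \eqref{P2}. Tonelli's theorem therefore applies, and integrating gives exactly $\|\widetilde V_\eps(f)\|_{L^p}^p\le\|\rho_\eps\|_{L^1}^{p-1}I_{\eps,p}(f)$. The only mildly delicate point is this measurability. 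It is also where $\widetilde V_\eps(f)$ is seen to be a measurable $[0,\infty]$-valued function, so the $L^p$ norm makes sense.

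For the consequence, assume $I_{\eps,p}(f)<+\infty$. Then $\widetilde V_\eps(f)\in L^p(\G)\subset L^1_{\rm loc}(\G)$ by \eqref{P1}, so Lemma \ref{firstestimate} applies. It gives that $V_\eps(f)$ is well defined a.e. (the integral defining it converges absolutely wherever $\widetilde V_\eps(f)(x)<\infty$), that it is measurable, and that $|V_\eps(f)|\le Q\,\widetilde V_\eps(f)$ a.e. Taking $L^p$ norms and using the first inequality raised to the power $1/p$ yields $\|V_\eps(f)\|_{L^p(\G)}\le Q\,\|\rho_\eps\|_{L^1(\G)}^{(p-1)/p}I_{\eps,p}(f)^{1/p}$.
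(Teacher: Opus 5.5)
Your proposal is correct and follows the paper's proof: Hölder's inequality in $h$ with the splitting $\rho_\eps=\rho_\eps^{1/p}\rho_\eps^{(p-1)/p}$, then integration in $x$, then Lemma \ref{firstestimate} for the pointwise bound $|V_\eps(f)|\le Q\,\widetilde V_\eps(f)$. You also check explicitly that $(x,h)\mapsto f(x\cdot h)$ is measurable so that Tonelli applies, a detail the paper leaves implicit, and you handle it correctly.
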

\begin{proof}
    Notice that, by Hölder inequality,
    \[\begin{split}|\widetilde V_\eps(f)(x)|^p &= \left(\int_\G\frac{|f(x\cdot h)-f(x)|}{N(h)}|\nabla_\G N(h)|\rho_\eps(h)^\frac{1}{p}\rho_\eps(h)^\frac{p-1}{p}\,dh\right)^p\\ & \leq \int_\G\frac{|f(x\cdot h)-f(x)|^p}{N(h)^p}|\nabla_\G N(h)|^p\rho_\eps(h)\, dh\,\left(\int_\G\rho_\eps(h)\,dh\right)^{p-1}\\ &=\|\rho_\eps\|_{L^1(\G)}^{p-1}\int_\G\frac{|f(x\cdot h)-f(x)|^p}{N(h)^p}|\nabla_\G N(h)|^p\rho_\eps(h)\, dh.
    \end{split}\]
    Integrating the previous inequality with respect to $x\in\G$ and applying Lemma \ref{firstestimate}, we get the conclusion.
\end{proof}

\begin{remark}\label{comparIIstar} Note that, for each measurable function $f:\,\G\to\R$, it holds that
\begin{equation}\label{comparIIstarfor}
I_{\eps,p}(f)\le\,\||\nabla_\G N|\|_{L^\infty(\G)}\, I^*_{\eps,p}(f)\,.
\end{equation}
The reverse comparison between $I_{\eps,p}(f)$ and $I^*_{\eps,p}(f)$ may fail, since $|\nabla_\G N|$ may vanish on some subset of $\G$, as pointed out in Example \ref{H1}.
\end{remark}
As in the Euclidean case \cite{BBM} (see also \cite{mengesha}), both energies $I_{\eps,p}(f)$ and $I^*_{\eps,p}(f)$  of 
horizontal Sobolev functions and functions of bounded variation are controlled by the norm of the distributional gradient. Combining with the previous lemma, we get a bound for the $L^p$-norm of the nonlocal gradients.
\begin{lemma}\label{thirdestimate}
    Let $(\rho_\eps)_\eps$ satisfy \eqref{P1} and \eqref{P2}. Then
    \begin{enumerate}
    \item[(i)] if $f\in W^{1,p}_\G(\G)$,  $1\leq p<+\infty$, then 
   
    \[I^*_{\eps,p}(f)\leq C(N)\|\rho_\eps\|_{L^1(\G)}\,\|\nabla_\G f\|^p_{L^p(\G)}\,,\]

    In particular $V_\eps(f)\in L^p(\G)$ and
    \[\|V_\eps(f)\|_{L^p(\G)}\leq 
    C(N)\,\|\rho_\eps\|_{L^1(\G)}\,\|\nabla_\G f\|_{L^p(\G)}.\]
    \item [(ii)] if $f\in BV_\G(\G)$, then
   
     \[I^*_{\eps,1}(f)\leq C(N)\|\rho_\eps\|_{L^1(\G)}\,|D_\G f|(\G)\,,\]
     
    In particular $V_\eps(f)\in L^1(\G)$ and
    \[\|V_\eps(f)\|_{L^1(\G)}\leq 
    C(N)\,\|\rho_\eps\|_{L^1(\G)}\,|D_\G f|(\G).\]
\end{enumerate}
\end{lemma}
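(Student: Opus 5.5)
We prove (i) first for smooth $f$ and then pass to general Sobolev functions by density; (ii) follows by Theorem \ref{anzellotti}. The key tool is the Carnot-group analogue of the fundamental theorem of calculus along horizontal curves. Fix $h\in\G$ and a geodesic $\gamma_h:[0,1]\to\G$ for the CC distance from $e$ to $h$. It is horizontal with constant speed $\|h\|_c$, so $|\gamma_h'(s)|_{\gamma_h(s)}\le\|h\|_c$. For $f\in C^\infty(\G)\cap W^{1,p}_\G(\G)$ (dense by Theorem \ref{meyersserrin}), left-invariance of the $X_i$ gives
\[
|f(x\cdot h)-f(x)|=\Bigl|\int_0^1\langle\nabla_\G f(x\cdot\gamma_h(s)),\gamma_h'(s)\rangle\,ds\Bigr|\le \|h\|_c\int_0^1|\nabla_\G f(x\cdot\gamma_h(s))|\,ds .
\]
By \eqref{equivalencenorms} we have $\|h\|_c\le C(N)N(h)$. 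Jensen's inequality in $s$ then yields
\[
\frac{|f(x\cdot h)-f(x)|^p}{N(h)^p}\le C(N)^p\int_0^1|\nabla_\G f(x\cdot\gamma_h(s))|^p\,ds.
\]

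Next we integrate in $x$. For fixed $h$ and $s$, the right translation $x\mapsto x\cdot\gamma_h(s)$ preserves Lebesgue measure, because the Haar measure is bi-invariant. Hence $\int_\G|\nabla_\G f(x\cdot\gamma_h(s))|^p\,dx=\|\nabla_\G f\|^p_{L^p(\G)}$. Tonelli's theorem is applicable: joint measurability of $(x,h,s)\mapsto\gamma_h(s)$ can be arranged by choosing a measurable selection of geodesics. Alternatively, one can avoid geodesics by joining $e$ to $h$ with a concatenation of at most $M$ horizontal segments $\exp(tX)$, with lengths controlled by $C\|h\|_c$ and depending measurably on $h$; this is a standard consequence of the stratification. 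Multiplying by $\rho_\eps(h)$ and integrating in $h$ gives
\[
I^*_{\eps,p}(f)\le C(N)^p\|\rho_\eps\|_{L^1(\G)}\|\nabla_\G f\|^p_{L^p(\G)}.
\]
For general $f\in W^{1,p}_\G(\G)$, take smooth $f_j\to f$ in $W^{1,p}_\G$ and, along a subsequence, a.e. By Fatou's lemma applied in $(x,h)$, noting that $(x,h)\mapsto f_j(x\cdot h)$ converges a.e. since $(x,h)\mapsto x\cdot h$ preserves null sets of $\G\times\G$, the inequality passes to the limit.

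For the nonlocal gradient, we use \eqref{comparIIstarfor} to get $I_{\eps,p}(f)\le\||\nabla_\G N|\|_\infty I^*_{\eps,p}(f)$. We then apply Lemma \ref{secondestimate}:
\[
\|V_\eps(f)\|_{L^p}\le Q\|\rho_\eps\|_1^{(p-1)/p}I_{\eps,p}(f)^{1/p}\le C(N)\|\rho_\eps\|_1\|\nabla_\G f\|_{L^p}.
\]
Here the powers of $\|\rho_\eps\|_1$ combine to exactly $1$, and $Q$ and $\||\nabla_\G N|\|_\infty^{1/p}$ are absorbed into $C(N)$; the $p$-dependence can be bounded uniformly, since $\max(1,C^p)^{1/p}\le\max(1,C)$.

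For (ii), take $f_j$ from Theorem \ref{anzellotti}, with $f_j\to f$ in $L^1$ and $|D_\G f_j|(\G)\to|D_\G f|(\G)$. These are smooth with $\nabla_\G f_j\in L^1$, so the $p=1$ estimate applies to them (the pointwise curve argument only needs smoothness). Fatou's lemma as above then gives $I^*_{\eps,1}(f)\le C(N)\|\rho_\eps\|_1|D_\G f|(\G)$, and the bound on $V_\eps(f)$ follows in the same way. The main obstacle is the measurability and uniformity of the connecting curves in the Tonelli step; I would use the concatenated-segments construction to make it fully rigorous.
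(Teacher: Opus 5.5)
Your proposal is correct and follows essentially the paper's argument: the horizontal fundamental theorem of calculus along a constant-speed CC geodesic, right-invariance of the Haar measure, equivalence of $N$ with $\|\cdot\|_c$, density via Theorem~\ref{meyersserrin} (resp.\ Theorem~\ref{anzellotti} plus Fatou for $BV$), and finally Lemma~\ref{secondestimate} combined with \eqref{comparIIstarfor}. The measurable-selection issue you flag as the main obstacle does not actually arise. For each fixed $h$ you only need Tonelli in $(x,t)$, which uses just the continuity of $t\mapsto\gamma_h(t)$, and the resulting bound $\int_\G|f(x\cdot h)-f(x)|^p\,dx\le\|h\|_c^p\|\nabla_\G f\|_{L^p(\G)}^p$ does not depend on which geodesic was chosen, so integrating against $\rho_\eps(h)/N(h)^p$ requires no joint measurability in $h$.
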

 \begin{proof}
     \begin{enumerate}
         \item[(i)] The first estimate can be reached by arguing as in \cite[Proposition 3.5]{barbieri}. We report here a more accurate proof.  
         Assume first $f\in C^1_\G(\G)$.
         For every $h\in\G$, let $\gamma_h:[0,1]\to\G$ be a geodesic (w.r.t the CC-distance) connecting 0 and $h$. 
         It is always possible to assume that $\gamma_h$ is parameterized with constant speed: in particular $| \gamma_h'(t)|_{\gamma_h(t)}=\|h\|_c$ for almost every $t\in[0,1]$. For $x\in\G$, define also $v(t)=v_{x,h}(t):=f(x\cdot \gamma_h(t))$. By the chain rule, since $t\mapsto x\cdot \gamma_h(t)$ is horizontal (by left-invariance of the generating vector fields), then for a.e. $t\in[0,1]$
          \[\dot v(t)=\langle\nabla_\G f(x\cdot \gamma_h(t)),(x\cdot\gamma_h(t))'\rangle_{x\cdot \gamma_h(t)},\]
  where $(x\cdot\gamma_h(t))'\in H\G_{x\cdot\gamma_h(t)}$ denotes the derivative of the curve $t\mapsto x\cdot \gamma_h(t)$ at time $t$. 
      Hence we get, using the fact that $|(x\cdot\gamma_h(t))'|_{x\cdot \gamma_h(t)}=|\gamma_h'(t)|_{\gamma_h(t)}=\|h\|_c$ for a.e. $t\in[0,1]$ (again by invariance of the generating vector fields),   
         \begin{equation}\label{ftc}|f(x\cdot h)-f(x)|\leq \int_0^1|\nabla_\G f(x\cdot \gamma_h(t))||(x\cdot\gamma_h(t))'|\,dt= \|h\|_c\int_0^1|\nabla_\G f(x\cdot \gamma_h(t))|\,dt.\end{equation}
  Integrating on $\G$, we obtain
  \[\begin{split}\int_\G|f(x\cdot h)-f(x)|^p\,dx&\leq \|h\|_c^p\int_\G\left(\int_0^1|\nabla_\G f(x\cdot \gamma_h(t))|\,dt\right)^p\,dx\\ &\leq \|h\|_c^p\int_0^1\int_\G|\nabla_\G f(x\cdot \gamma_h(t))|^p\,dx\,dt\\&\leq \|h\|_c^p\int_0^1\int_\G|\nabla_\G f(y)|^p\,dy\,dt\\&\leq \|h\|_c^p\int_\G|\nabla_\G f(y)|^p\,dy.
  \end{split}\]
 Since $N$ is an arbitrary homogeneous norm on $\G$,  then it is equivalent to $\|\cdot\|_c$. Thus, we obtain that
  \[
  \int_\G|f(x\cdot h)-f(x)|^p\,dx\le\, C(N)\,N(h)^p\,\int_\G|\nabla_\G f(y)|^p\,dy\,.
  \]By an approximation argument, thanks to Theorem \ref{meyersserrin}, the same estimate holds for functions in $W^{1,p}_\G(\G)$. 
  Finally, multiplying both sides by $\rho_\eps(h)/N(h)^p$ and integrating over $\G$, we conclude that 
\[\begin{split}I^*_{\eps, p}(f)&=\int_\G\int_\G\frac{|f(x\cdot h)-f(x)|^p}{N(h)^p}\rho_\eps(h)\,dx\,dh\\&\leq C(N)\int_\G\rho_\eps(h)\,dh\int_\G|\nabla_\G f(y)|^p\,dy=C(N)\|\rho_\eps\|_{L^1(\G)}\|\nabla_\G f\|^p_{L^p(\G)}.\end{split}\]

The final estimate can be obtained by applying Lemma \ref{secondestimate}: 
         \[\begin{split}\|V_\eps(f)\|_{L^p(\G)}&\leq 
         Q\,\|\rho_\eps\|_{L^1(\G)}^{(p-1)/p}\,I_{\eps, p}(f)^{1/p}\leq \\& \leq 
         C(N)\,\|\rho_\eps\|_{L^1(\G)}\,\|\nabla_\G f\|_{L^p(\G)}.\end{split}\]
         \item[(ii)] By density of $C^\infty(\G)\cap W^{1,1}_\G(\G)=C^\infty(\G)\cap BV_\G(\G) $ with respect to the strict convergence in $BV_\G(\G)$ (Theorem \ref{anzellotti}), for every $f\in BV_\G(\G)$ there exists a sequence $(f_h)_h\subset C^\infty(\G)\cap W^{1,1}_\G(\G)$ such that
         \[f_h\to f\quad\text{in } L^1(\G)\] and 
         \[|D_\G f_h|(\G)=\|\nabla_\G f_h\|_{L^1(\G)}\to|D_\G f|(\G).\]
       Up to a subsequence we can also assume that
       \[f_h\to f\quad \text{a.e. in }\G.\]
       Using Fatou's lemma and applying (i) with $p=1$ and $f\equiv f_h$,
       \[I^*_{\eps,1}(f)\leq \liminf_{h\to\infty}I^*_{\eps,1}(f_h)\leq C(N)\|\rho_\eps\|_{L^1(\G)}\liminf_{h\to\infty}\|\nabla_\G f_h\|_{L^1(\G)}=C(N)\|\rho_\eps\|_{L^1(\G)}|D_\G f|(\G).\]
       Applying Lemma \ref{secondestimate}, we get the final estimate.\qedhere
     \end{enumerate}
 \end{proof}
Let $\tilde \rho_\eps$ be the map in \eqref{P2}, namely the profile of $\rho_\varepsilon$. Set 
 \begin{equation}\label{Keps}K_\eps(h):=Q\int_{N(h)}^{+\infty}\frac{\tilde \rho_\eps(t)}{t}\,dt\quad\text{for } h\in\G.\end{equation}
 The next lemma enlightens some important properties of the map $K_\eps$, which we will exploit in the following results. In particular, identity (iv) turns out to be the crucial point in the proof of Theorem \ref{convolution}.
 Recall that, since any two left invariant and homogeneous norms are equivalent, the space ${\rm Lip}_\G(\Omega)$ does not depend on the reference distance we choose. 
 \begin{lemma}\label{propertiesofKeps}
     Let $(\rho_\eps)_\eps$ satisfy \eqref{P1} and \eqref{P2}. Then
     \begin{enumerate}
         \item [(i)] $K_\eps \in L^1(\G)$ and \[\|K_\eps\|_{L^1(\G)}=\|\rho_\eps\|_{L^1(\G)};\]
         \item[(ii)] For every $h\in\G\setminus\{0\}$, it holds $ K_\eps (h)<+\infty$. Moreover $K_\eps$ is continuous on $\G\setminus\{0\}$ and there exists \[\exists\,\lim_{h\to 0} K_\eps (h)\in[0,+\infty];\]
         \item[(iii)] If {\rm{spt}}$(\rho_\eps)\subset B(R_\eps)$, then also {\rm{spt}}$(K_\eps)\subset B(R_\eps)$;
         \item[(iv)] If $\rho_\eps\in L^\infty(\G)$ and $\rho_\eps$ has compact support in $\G\setminus\{0\}$, then $K_\eps\in \rm{Lip}_{\G,c}(\G)$ and
         \[\nabla_\G K_\eps (h)=-Q\,\frac{\nabla_\G N(h)}{N(h)}\,\rho_\eps(h)\quad\text{for a.e.}\,h\in \G\setminus\{0\}.\]
     \end{enumerate}
    Moreover, if $(\rho_\eps)_\eps$ satisfy \eqref{P1}$\div$\eqref{P4}, then
    \begin{enumerate}
        \item[(v)] $(K_\eps)_\eps$ turns out to be an approximation to the identity.
    \end{enumerate}
     
 \end{lemma}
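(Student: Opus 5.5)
Every item reduces to a one-variable computation in polar coordinates, i.e.\ Theorem \ref{polarcoordinates} in the radial form \eqref{iwrtradfctpoco}, together with Fubini--Tonelli. Set $\kappa_\eps(r):=Q\int_r^\infty \tilde\rho_\eps(t)t^{-1}\,dt$, so that $K_\eps=\kappa_\eps\circ N$.

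\emph{Item (i).} By \eqref{iwrtradfctpoco} and Tonelli,
\[
\|K_\eps\|_{L^1(\G)}=\sigma(S_\G)\int_0^\infty Q\int_r^\infty\frac{\tilde\rho_\eps(t)}{t}\,dt\,r^{Q-1}\,dr
=\sigma(S_\G)\int_0^\infty\frac{\tilde\rho_\eps(t)}{t}\Big(Q\int_0^t r^{Q-1}\,dr\Big)dt .
\]
The inner factor is $t^Q$, so the right-hand side equals $\sigma(S_\G)\int_0^\infty\tilde\rho_\eps(t)t^{Q-1}\,dt=\|\rho_\eps\|_{L^1}$.

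\emph{Item (ii).} For $r>0$ the function $\tilde\rho_\eps(t)t^{-1}$ is integrable on $[r,\infty)$, because there $t^{-1}\le r^{-Q}t^{Q-1}$ and $\tilde\rho_\eps t^{Q-1}\in L^1(0,\infty)$ by (i). Hence $\kappa_\eps(r)<\infty$. By absolute continuity of the integral, $\kappa_\eps$ is continuous and nonincreasing on $(0,\infty)$. Since $N$ is continuous, $K_\eps$ is continuous on $\G\setminus\{0\}$. Monotonicity then gives the existence of $\lim_{r\to0^+}\kappa_\eps(r)\in[0,\infty]$, which is the limit of $K_\eps(h)$ as $h\to0$.

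\emph{Item (iii).} If $\tilde\rho_\eps=0$ a.e.\ on $[R_\eps,\infty)$, then $\kappa_\eps(r)=0$ for $r\ge R_\eps$.

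\emph{Item (iv).} This is the substantial step. Suppose $\rho_\eps\in L^\infty$ with support in $\{a\le N\le b\}$, $0<a<b$. Then $\kappa_\eps$ is constant on $[0,a]$ and vanishes on $[b,\infty)$. Moreover $|\kappa_\eps'(r)|=Q\tilde\rho_\eps(r)/r\le Q\|\rho_\eps\|_\infty/a$ for a.e.\ $r$, so $\kappa_\eps$ is Lipschitz on $[0,\infty)$. Since $N$ is $1$-Lipschitz with respect to $d_N$, the composition $K_\eps$ is Lipschitz with compact support, i.e.\ $K_\eps\in{\rm Lip}_{\G,c}(\G)$.

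For the gradient formula I would use a chain rule $\nabla_\G(\kappa\circ N)=\kappa'(N)\nabla_\G N$ a.e. The subtlety is that $\kappa_\eps'$ is only defined a.e.\ on $(0,\infty)$, so one must handle the set $N^{-1}(E)$, where $E$ is the $\mathcal L^1$-null set on which $\kappa_\eps$ fails to be differentiable with $\kappa_\eps'=-Q\tilde\rho_\eps/r$. I would avoid this issue by approximation. First, for $\kappa\in C^1$ the chain rule is classical. Next, approximate $\tilde\rho_\eps$ in $L^1_{\rm loc}$ by continuous $g_j$ supported in $[a/2,2b]$ and uniformly bounded. The corresponding $\kappa_j$ then converge uniformly to $\kappa_\eps$. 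Moreover $\kappa_j'(N)\nabla_\G N\to -Q\tilde\rho_\eps(N)N^{-1}\nabla_\G N$ in $L^1(\G)$, because by polar coordinates
\[
\|(g_j-\tilde\rho_\eps)\circ N\|_{L^1(\{a/2\le N\le 2b\})}\le C\|g_j-\tilde\rho_\eps\|_{L^1(a/2,2b)} .
\]
Passing to the limit in the distributional identity gives the stated formula a.e.; this also handles the dependence on the chosen Borel representative of the profile. This approximation is the step I expect to be the main obstacle: it requires checking that the polar-coordinate transfer controls $L^1$ convergence of radial functions, that $\nabla_\G N\in L^\infty$, and that the weak and Pansu gradients agree for Lipschitz functions.

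\emph{Item (v).} Property (i) of Definition \ref{def_approximation} holds because $\|K_\eps\|_{L^1}=1$ by item (i) and \eqref{P3}; since $K_\eps\ge0$, property (ii) holds as well. For property (iii), the same Tonelli computation on $\{N>\delta\}$ gives
\[
\int_{\G\setminus B(\delta)}K_\eps
=\sigma(S_\G)\int_\delta^\infty\frac{\tilde\rho_\eps(t)}{t}\,(t^Q-\delta^Q)\,dt
\le\int_{\G\setminus B(\delta)}\rho_\eps\to0
\]
by \eqref{P4}.
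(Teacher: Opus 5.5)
Your proof is correct. Items (i), (iii) and (v) are the same Tonelli computations in polar coordinates that the paper uses. In (ii) you bound $t^{-1}\le r^{-Q}t^{Q-1}$ on $[r,\infty)$ to obtain finiteness of $\kappa_\eps(r)$ directly. The paper instead argues by contradiction: if $K_\eps(h)=+\infty$ for some $h\neq0$, radial monotonicity gives $K_\eps\equiv+\infty$ on $B(N(h))$, contradicting (i). Both arguments are fine.

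The genuine difference is in (iv). The paper applies the chain rule pointwise: at any $h\neq0$ where $N$ is P-differentiable and the profile $\theta_\eps$ (your $\kappa_\eps$) is differentiable at $N(h)$ with $\theta_\eps'(N(h))=-Q\tilde\rho_\eps(N(h))/N(h)$, one has $D_PK_\eps(h)=\theta_\eps'(N(h))\,D_PN(h)$. The exceptional set is the union of a Pansu-null set and $N^{-1}(E)$ with $\mathcal L^1(E)=0$. The latter is negligible because, by \eqref{iwrtradfctpoco}, $|N^{-1}(E)|=\sigma(S_\G)\int_E r^{Q-1}\,dr=0$. So the obstacle you flagged is removed by exactly the polar-coordinate transfer you already invoke in your approximation step, and smoothing the profile is not needed.

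Your route is nonetheless valid: chain rule for $C^1$ profiles, uniform convergence $\kappa_j\to\kappa_\eps$, $L^1$ convergence of $\kappa_j'(N)\nabla_\G N$, then passage to the limit in the distributional identity. It requires the chain rule only for $C^1$ outer functions. It also produces the weak gradient directly, which is the form actually used in the integration by parts of Step 1 of Theorem~\ref{convolution}. The price is a longer argument than the paper's few lines.
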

 \begin{proof}
    \begin{enumerate}
        \item[(i)] It is clear that $K_\eps$ is measurable, radial and nonnegative. Applying Theorem \ref{polarcoordinates} and Fubini-Tonelli theorem
        \[\begin{split}\int_\G K_\eps(h)\,dh&=Q\,\sigma(S_\G)\int_0^{+\infty}r^{Q-1}\int_r^{+\infty}\frac{\tilde\rho_\eps(t)}{t}\,dt\,dr\\&=Q\sigma(S_\G)\int_0^{+\infty}\frac{\tilde\rho_\eps(t)}{t}\int_0^tr^{Q-1}\,drdt\\ &=\sigma(S_\G)\int_0^{+\infty}\tilde\rho_\eps(t)t^{Q-1}\,dt=\int_\G\rho_\eps(h)\,dh.\end{split}\]
        \item[(ii)] Notice that $K_\eps$ is radial and radially decreasing. Therefore, if $K_\eps(h)=+\infty$ for some $h\in\G\setminus\{0\}$, then $K_\eps=+\infty$ on $B(0,N(h))$, a contradiction with (i). Hence $K_\eps(h)<+\infty$. This also implies that $t\to\tilde\rho_\eps(t)/t$ is in $L^1((a,+\infty))$ for any $a>0$: it follows that $K_\eps$ is continuous on $\G\setminus\{0\}$. Finally, being $K_\eps$ radial and radially decreasing, the desired limit exists. 
        \item[(iii)] Assume spt$(\rho_\eps)\subset B(R_\eps)$. Then $\tilde \rho_\eps(t)=0$ for $t> R_\eps$. Then, by definition, $K_\eps(h)=0$ if $h\not\in  B(R_\eps)$. This proves that \rm{spt}$(K_\eps)\subset B(R_\eps)$;
        \item[(iv)] 
        Since $\rho_\eps$ is bounded and has compact support in $\G\setminus\{0\}$, the same holds for $\tilde\rho_\eps$ in the interval $(0,+\infty)$.
        Notice that $K_\eps(h)=(\theta_\eps\circ N)(h)$, where\[\theta_\eps(s):=Q\int_s^{+\infty}\frac{\tilde\rho_\eps(t)}{t}\,dt.\]
        Since $\tilde\rho_\eps\in L^\infty((0,+\infty))$ and has compact support, then the same is true for $t\to \tilde\rho_\eps(t)/t$, and in particular we get $\theta_\eps\in \text{Lip}_c([0,+\infty))$. Finally, using the Lipschitz continuity of the homogeneous norm $N$, we conclude that $K_\eps=\theta_\eps\circ N\in \rm{Lip}_{\G,c}(\G)$.
        \medskip\\
        By Pansu differentiability theorem, $N$ is P-differentiable for a.e. $h\in\G$. Moreover, by an easy consequence of Theorem \ref{polarcoordinates}, $\theta_\eps$ is differentiable in $N(h)$ for almost every $h\in\G$. Select a point (different than 0) with the previous properties. Then we can apply the chain rule and obtain
        \[\nabla_\G K_\eps(h)=\theta'_\eps(N(h))\,\nabla_\G N(h)=-Q\,\frac{\tilde\rho_\eps(N(h))}{N(h)}\nabla_\G N(h)=-Q\,\frac{\nabla_\G N(h)}{N(h)}\,\rho_\eps(h).\]
        \item[(v)] By (i) and property \eqref{P3}, $(K_\eps)_\eps$ is bounded in $L^1(\G)$ and 
        \[\int_\G K_\eps (h)\,dh=1\quad\text{for every }\eps>0.\]
        We only need to show that 
        \[\lim_{\eps\to0}\int_{\{N(h)>\delta\}}K_\eps(h)\,dh=0\quad\text{for all }\delta>0.\]
        Indeed, arguing as in (i),
        \[\begin{split}\int_{\{N(h)>\delta\}}K_\eps(h)\,dh&=Q\,\sigma(S_\G)\int_\delta^{+\infty}r^{Q-1}\int_r^{+\infty}\frac{\tilde\rho_\eps(t)}{t}\,dt\,dr\\ &=Q\sigma(S_\G)\int_\delta^{+\infty}\frac{\tilde\rho_\eps(t)}{t}\int_\delta^tr^{Q-1}\,drdt\\&\leq \sigma(S_\G)\int_\delta^{+\infty}\tilde\rho_\eps(t)t^{Q-1}\,dt=\int_{\{N(h)>\delta\}}\rho_\eps(h)\,dh\stackrel{\eqref{P4}}\to 0.\qedhere\end{split}\]
    \end{enumerate} 
 \end{proof}
\noindent The next result clarifies why nonlocal gradients are approximations of the horizontal gradient. This crucial identity has been pointed out in \cite{brezis} for the Euclidean gradient and we can now extend it for maps defined on Carnot groups.
\begin{theorem}[Representation formula for the nonlocal gradient]\label{convolution}\phantom{}\medskip\\
Assume $(\rho_\eps)_\eps$ satisfy \eqref{P1} and \eqref{P2} and let $K_\eps$ be the function defined in \eqref{Keps}. 
\begin{enumerate}
    \item[(i)] Let $1\leq p<+\infty$ and $f\in W^{1,p}_\G(\G)$. Then
    \[V_\eps(f)(x)=(\nabla_\G f \ast K_\eps)(x)\quad\text{for a.e. } x\in \G.\]
    \item[(ii)]Let $f\in BV_\G(\G)$. Then
    \[V_\eps(f)(x)=(D_\G f\ast K_\eps)(x)\quad\text{for a.e. }x\in \G.\]
  
\end{enumerate}
\end{theorem}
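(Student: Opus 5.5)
We first prove the formula when $\rho_\eps$ is bounded with compact support in $\G\setminus\{0\}$, and then remove this restriction by truncating the profile.

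\textbf{Regular kernels.} Assume $\rho_\eps\in L^\infty(\G)$ with compact support in $\G\setminus\{0\}$. By Lemma \ref{propertiesofKeps}(iv), $K_\eps\in{\rm Lip}_{\G,c}(\G)$ and
\[
-\nabla_\G K_\eps(h)=Q\,\frac{\nabla_\G N(h)}{N(h)}\rho_\eps(h)\quad\text{for a.e. } h.
\]
Since the integrand of $V_\eps(f)$ vanishes near $0$, we may split
\[
V_\eps(f)(x)=Q\int_\G f(x\cdot h)\frac{\nabla_\G N(h)}{N(h)}\rho_\eps(h)\,dh-f(x)\,Q\int_\G \frac{\nabla_\G N(h)}{N(h)}\rho_\eps(h)\,dh .
\]
The second integral equals $-\int_\G\nabla_\G K_\eps=0$. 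Indeed, $K_\eps$ is compactly supported and Lipschitz, so $\int X_iK_\eps=0$; this follows from the weak definition tested against a cutoff equal to $1$ on the support, together with $X_i^*=-X_i$, which holds because the $X_i$ are divergence free in exponential coordinates.

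For $f\in C^\infty\cap W^{1,p}_\G$, the first term equals $-\int_\G f(x\cdot h)\nabla_\G K_\eps(h)\,dh$. We then integrate by parts in $h$, using left invariance: $X_i[h\mapsto f(x\cdot h)]=(X_if)(x\cdot h)$. This gives
\[
V_\eps(f)(x)=\int_\G \nabla_\G f(x\cdot h)K_\eps(h)\,dh .
\]
Substituting $h\mapsto h^{-1}$ and using $K_\eps(h^{-1})=K_\eps(h)$, which holds since $N(h^{-1})=N(h)$, this is exactly $(\nabla_\G f\ast K_\eps)(x)$.

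\textbf{General $f$ for regular kernels.} For general $f\in W^{1,p}_\G$ we approximate by Theorem \ref{meyersserrin}. The left side converges in $L^p_{\rm loc}$ because $V_\eps$ is linear and bounded on $L^p$ for regular kernels: $\rho_\eps/N$ is integrable, so $|V_\eps(g)|\le C(|g|\ast\cdot+|g|)$. The right side converges by Young's inequality (Proposition \ref{young}).

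For $f\in BV_\G$ we use the strict approximation of Theorem \ref{anzellotti}. Then $\nabla_\G f_h\,\mathcal L^n\rightharpoonup D_\G f$ weakly$^*$, and with bounded total variation. Since $K_\eps$ is continuous with compact support, $(\nabla_\G f_h\ast K_\eps)(x)\to(D_\G f\ast K_\eps)(x)$ for every $x$, while $V_\eps(f_h)\to V_\eps(f)$ in $L^1_{\rm loc}$. Extracting a subsequence converging a.e., the formula passes to the limit.

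\textbf{General kernels.} This is the main obstacle, since $V_\eps(f)$ is defined only through the possibly non-absolutely convergent integrand. We truncate the profile: $\rho^j:=\min(\rho_\eps,j)\chi_{\{1/j<N<j\}}$, with corresponding kernel $K^j$ defined by \eqref{Keps}. Monotone convergence gives $K^j\uparrow K_\eps$ pointwise. By Lemma \ref{propertiesofKeps}(i), $\|K^j\|_1\to\|K_\eps\|_1$, hence $K^j\to K_\eps$ in $L^1$. Consequently $\nabla_\G f\ast K^j\to\nabla_\G f\ast K_\eps$ in $L^p$, or $D_\G f\ast K^j\to D_\G f\ast K_\eps$ in $L^1$ by Lemma \ref{propconv2}(i).

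On the other side, Lemma \ref{thirdestimate} combined with \eqref{comparIIstarfor} and Lemma \ref{secondestimate} shows that $\widetilde V_\eps(f)<\infty$ a.e. The integrand of $V_\eps(f)(x)$ is therefore absolutely integrable against $\rho_\eps$ for a.e. $x$. Dominated convergence in $h$, with $\rho^j\le\rho_\eps$, then yields $V^{j}_\eps(f)(x)\to V_\eps(f)(x)$ a.e., where $V^j_\eps$ denotes the nonlocal gradient built from $\rho^j$. Comparing the two a.e. limits gives both (i) and (ii).
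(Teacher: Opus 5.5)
Your proof is correct. Its first step matches the paper's Step~1: kernels bounded and supported away from the origin, smooth $f$, integration by parts against the Lipschitz function $K_\eps$, then the substitution $h\mapsto h^{-1}$. Both approximation steps, however, take a different route from the paper.

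\emph{From smooth $f$ to general $f$.} The paper mollifies intrinsically. It uses $\nabla_\G(\eta_\delta*f)=\eta_\delta*\nabla_\G f$ (resp.\ $\eta_\delta*D_\G f$) and associativity to get the exact identity $V_\eps(\eta_\delta*f)=\eta_\delta*(\nabla_\G f*K_\eps)$. It then proves $V_\eps(\eta_\delta*f)\to V_\eps(f)$ a.e.\ by dominated convergence with a truncated right maximal function, which in turn needs Sobolev--Poincar\'e to get $f\in L^q_{\rm loc}$ with $q>1$. You instead note that for such kernels $V_\eps$ is a bounded linear operator on $L^p$, by Young's inequality with $\rho_\eps/N\in L^1$. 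Meyers--Serrin and Anzellotti--Giaquinta approximation then suffice, with weak$^*$ convergence tested against $y\mapsto K_\eps(y^{-1}x)\in C^0_c(\G)$ in the BV case. This is shorter and avoids maximal functions and Sobolev embeddings entirely. The cost is that you handle the Sobolev and BV cases with two different density theorems, whereas the paper's mollification treats both uniformly.

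\emph{From regular to general kernels.} The paper approximates $\rho_\eps$ in $L^1$ by Lipschitz radial kernels, obtained by mollifying $\tilde\rho_\eps r^{Q-1}$. It gets norm convergence $V_{\eps,j}(f)\to V_\eps(f)$ from Lemma~\ref{thirdestimate}, implicitly applied to the signed kernel $\rho_{\eps,j}-\rho_\eps$ via its absolute value. Your monotone truncation $\rho^j=\min(\rho_\eps,j)\chi_{\{1/j<N<j\}}$ works differently:
\begin{itemize}
\item It needs only the $L^\infty$ hypothesis of Lemma~\ref{propertiesofKeps}(iv), so no profile mollification is required.
\item It gives $K^j\uparrow K_\eps$ in $L^1$ directly by monotone convergence.
\item It gives a.e.\ convergence $V^j_\eps(f)\to V_\eps(f)$ from the domination $\rho^j\le\rho_\eps$ together with the a.e.\ finiteness of $\widetilde V_\eps(f)$.
\end{itemize}
You obtain only a.e.\ rather than norm convergence on that side, but that is all the statement needs. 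The paper's Lipschitz approximants have the modest advantage of being reused later, in the proof of Theorem~\ref{converse2}; your bounded truncations would serve equally well there.
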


  \begin{proof}
Following \cite[Lemma 1.5]{brezis}, we divide the proof into three steps.\vspace{0.1 cm}\\
\textbf{Step 1}: \textit{Proof of (i) in the case $f\in C^{\infty}(\G)$, $\rho_\eps\in \rm{Lip}_{\G,c}(\G\setminus\{0\})$.}\vspace{0.1 cm}\\ In this case, we actually prove that
\[V_\eps(f)(x)=(\nabla_\G f \ast K_\eps)(x)\quad\text{for every } x\in \G.\]
By Lemma \ref{propertiesofKeps}, we know that $K_\eps\in \rm{Lip}_{\G,c}(\G)$ and \begin{equation}\label{gradientofKeps}\nabla_\G K_\eps (h)=-Q\,\frac{\nabla_\G N(h)}{N(h)}\,\rho_\eps(h)\quad\text{for a.e.}\,h\in \G\setminus\{0\}.\end{equation}
Let's now notice that, for each $\eps>0$ and for each $x\in\G$,
\begin{equation}\label{byparts}
    \int_\G\nabla_\G f(x\cdot h) K_\eps(h)\,dh=-\int_\G (f(x\cdot h)-f(x))\nabla_\G K_\eps(h)\,dh.
\end{equation}
Indeed, for every $\eps>0$, it holds $K_\varepsilon\in \text{Lip}_{\G,c}(\G)$ by Lemma \ref{propertiesofKeps} (iv). Since Lip$_{\G,c}(\G)\subset W^{1,\infty}_\G(\G)$ and $K_\eps$ has compact support in $\G$, \eqref{byparts} follows by an integration by parts and the left-invariance of $\nabla_\G$. 
Moreover, since $K_\eps(h)=K_\eps(h^{-1})$, by a change of variable we get  
\begin{equation}\label{convchange}\int_\G\nabla_\G f(x\cdot h)K_\eps(h)\,dh=   \int_\G\nabla_\G f(x\cdot h^{-1})K_\eps(h)\,dh = (\nabla_\G f\, \ast\,K_\eps)(x).\end{equation}
Combining \eqref{gradientofKeps}, \eqref{byparts} and \eqref{convchange}, we get the conclusion. \medskip\\
\textbf{Step 2}: \textit{Proof of (i) and (ii) in the case $\rho_\eps\in\rm{Lip}_{\G,c}(\G\setminus\{0\})$ and $f\in W^{1,p}_\G(\G)$ or $f\in BV_\G(\G)$.}\vspace{0.1 cm}\\ 
Let $(\eta_\delta)_{\delta>0}$ be a family of intrinsic mollifiers. First consider the case of $f\in W^{1,p}_\G(\G)$. By Step 1 and Lemma \ref{propconv1},
\[V_\eps(\eta_\delta\ast f)=\nabla_\G(\eta_\delta\ast f)\ast K_\eps=(\eta_\delta\ast\nabla_\G f)\ast K_\eps=\eta_\delta\ast(\nabla_\G f\ast K_\eps).\]
By Proposition \ref{young} and Lemma \ref{propertiesofKeps} (iv) one gets $\nabla_\G f\ast K_\eps\in L^p(\G)$: hence, by Lemma \ref{propconv1}, $\eta_\delta\ast(\nabla_\G f\ast K_\eps)\to\nabla_\G f\ast K_\eps$ in $L^p(\G)$ as $\delta\to 0$. In particular the convergence holds almost everywhere, up to passing to a discrete subsequence.\medskip\\
Similarly, if $f\in BV_\G(\G)$, by applying Step 1 and Lemma \ref{propconv2}, we get
\[V_\eps(\eta_\delta\ast f)=\nabla_\G(\eta_\delta\ast f)\ast K_\eps=(\eta_\delta\ast D_\G f)\ast K_\eps=\eta_\delta\ast(D_\G f\ast K_\eps).\]
Recalling that, by Lemma \ref{propertiesofKeps}, $K_\eps\in \rm{Lip}_{\G,c}(\G)$, it follows from Lemma \ref{propconv2} that $D_\G f\ast K_\eps\in C^0_b(\G)$. Hence $\eta_\delta\ast(D_\G f\ast K_\eps)\to D_\G f\ast K_\eps$ uniformly on compact sets by Lemma \ref{propconv1}. \medskip\\
In order to reach the desired conclusion, it suffices to show that
\begin{equation}\label{Vepsconvergence} V_\eps(\eta_\delta\ast f)\stackrel{\delta\to 0^+}\longrightarrow V_\eps(f)\end{equation}almost everywhere in $\G$ either if $f\in W^{1,p}_\G(\G)$ or $f\in BV_\G(\G)$.
Since $f\in L^1_{\rm loc}(\G)$, then $\eta_\delta\ast f\to f$ in $L^{1}_{\rm loc}(\G)$: hence we can assume that $f_\delta(x):=(\eta_\delta\ast f)(x)\to f(x)$ for a.e. $x\in\G$. Then \eqref{Vepsconvergence} follows by Lebesgue's dominated convergence theorem if we prove the existence, for a.e. $x\in\G$, of a function $g=g_{x,\eps}\in L^1(\G)$ such that
\[\frac{|f_\delta(x\cdot h)-f_\delta(x)|}{N(h)}|\nabla_\G N(h)|\rho_\eps(h)\leq g(h)\quad\text{for a.e. }h\in\G,\;\;\text{for all }\delta\in(0,1).\]
In order to construct $g$, define the truncated \textit{right} maximal function $M_{R,1} f:\G\to[0,+\infty]$ as
\[M_{R,1} f(x):=\sup_{0<\delta\leq 1} \fint_{B(\delta)\cdot x} |f|\,d\mathcal L^n.\]
It follows from standard arguments that $M_{R,1} f\in L^p_{\rm loc}(\G)$ whenever $f\in L^p_{\rm loc}(\G)$ for $p>1$ (see for instance \cite[Theorem 1]{stein}, noticing that the family $\{B(\delta)\cdot x:x\in\G,\,\delta>0\}$ satisfies assumptions $(i)\div(iv)$ in the same reference). Moreover, an explicit computation shows that $|f_\delta(x)|=|(\eta_\delta\ast f)(x)|\leq M_{R,1}f(x)$ for every $\delta\in(0,1)$:
\[\begin{split} |(\eta_\delta\ast f)(x)|&=\left|\int_\G\eta_\delta(x\cdot y^{-1})f(y)\,dy\right|\\&\leq \int_{B(\delta)\cdot x}|\eta_\delta(x\cdot y^{-1})||f(y)|\,dy\\&
\lesssim \fint_{B(\delta)\cdot x}|f(y)|\,dy\leq M_{R,1} f(x),
    \end{split}\]
    where, with the notation $a\lesssim b$, we denote, here and in the following, the existence of a positive constant $C$ such that $a\leq Cb$.
Hence we obtain the domination
\[\frac{|f_\delta(x\cdot h)-f_\delta(x)|}{N(h)}|\nabla_\G N(h)|\rho_\eps(h)\leq C(M_{R,1} f(x\cdot h)+M_{R,1}f(x))\frac{\rho_\eps (h)}{N(h)}=:g(h).\]
Let's now show that $g\in L^1(\G)$. By the extra assumption on $\rho_\eps$ (i.e. $\rho_\eps\in {\rm Lip}_{\G,c}(\G\setminus\{0\}))$, the function \[h\mapsto\frac{\rho_\eps(h)}{N(h)}\in C^0_c(\G\setminus\{0\}).\] 
In addition, by Sobolev-Poincaré inequality (see, for instance, \cite{lu92}, \cite{franchi95}, \cite{saloff-coste}), 
$f\in L^{q}_{\rm loc}(\G)$ for all $1<q\leq\frac{Q}{Q-1}$, which also implies \[M_{R,1} f\in L^{q}_{\rm loc}(\G)\subset L^1_{\rm loc}(\G).\]
In particular $M_{R,1} f<+\infty$ almost everywhere in $\G$ and, by left-invariance of the Lebesgue measure, also $h\mapsto M_{R,1} f(x\cdot h)\in L^1_{\rm loc}(\G)$. Therefore \eqref{Vepsconvergence} holds on the set 
\[E:=\{x\in\G: M_{R,1}f(x)<+\infty,\, f_\delta(x)\stackrel{\delta\to 0}\rightarrow f(x)\}.\]
\noindent
\textbf{Step 3}: \textit{Proof of (i) and (ii) in the general case.}\vspace{0.1 cm}\\ For given $\eps$, we approximate $\rho_\eps$ in $L^1(\G)$ with a sequence $(\rho_{\eps,j})_j\subset \rm{Lip}_{\G,c}(\G\setminus\{0\})$ satisfying \eqref{P1} and \eqref{P2}. We construct such a sequence in the following way. For each $j\in\mathbb N$, let $\psi_j(r):=r^{Q-1}\chi_{(\frac{1}{j},j)}(r)$ and $\rho_{\eps,j,\delta}^*(r):=((\tilde\rho_\eps\psi_j)\ast \eta_\delta)(r)$, where $(\eta_\delta)_\delta$ denotes a family of (classical) mollifiers on $\R$.
Then define
\[\widetilde \rho_{\eps,j,\delta}(r):=\frac{\rho_{\eps,j,\delta}^*(r)}{r^{Q-1}}\quad\text{if }r\in(0,+\infty)\]
and
\[\rho_{\eps,j,\delta}(x):=\widetilde\rho_{\eps,j,\delta}(N(x))\quad\text{if } x\in\G.\]
By construction, $\rho_{\eps,j,\delta}$ are radial and positive.
By Theorem \ref{polarcoordinates} note that, since $\rho_\eps\in L^1(\G)$, 
\begin{equation}\label{integrability}\sigma(S_\G)\int_0^{+\infty}\tilde\rho_\eps(r)\,r^{Q-1}\,dr=\int_\G\rho_\eps(x)\,dx<+\infty.\end{equation}
Therefore, for every $j\in\mathbb N$, $\tilde\rho_\eps \psi_j\in L^1((0,+\infty))$. Then, by the classical approximation by convolution properties, $\rho_{\eps,j,\delta}^*\to \tilde\rho_\eps\psi_j$ in $L^1$ when $\delta\to 0$ and also ${\rm{spt}}(\rho^*_{\eps,j,\delta})\subset[\frac{1}{j},j]+[-\delta,\delta]$. For given $\eps>0, j\in\mathbb N$, fix $\bar\delta=\bar\delta(\eps, j)<1/j$ such that \begin{equation}\label{1/j}
\int_0^{+\infty}|\rho_{\eps,j,\delta}^*-\tilde\rho_\eps\psi_j|\,dr<\frac{1}{j}\quad\text{for every }\delta\leq\bar\delta.
\end{equation}
Denote $\rho^*_{\eps,j}:=\rho_{\eps,j,\bar\delta}^*$, $\widetilde \rho_{\eps,j}:=\widetilde\rho_{\eps,j,\bar\delta}$ and $\rho_{\eps,j}:=\rho_{\eps,j,\bar\delta}$.
We compute, again by Theorem \ref{polarcoordinates},
\begin{equation}\begin{aligned}\label{esti1}\int_\G|\rho_{\eps,j}(x)-\rho_\eps(x)|\,dx&=\sigma(S_\G)\int_0^{+\infty}|\widetilde\rho_{\eps,j}(r)-\tilde\rho_\eps(r)|r^{Q-1}\,dr\\&=\sigma(S_\G)\int_0^{+\infty}|\rho^*_{\eps,j}(r)-\tilde\rho_\eps(r)r^{Q-1}|\,dr.\end{aligned}\end{equation}
From \eqref{1/j} we get 
\begin{equation}\begin{aligned}\label{esti2}\int_0^{+\infty}|\rho_{\eps,j}^\ast(r)-\tilde\rho_\eps(r)r^{Q-1}|\,dr&\leq \int_0^{+\infty}|\rho_{\eps,j}^*-\tilde\rho_\eps\psi_j|\,dr+\int_0^{+\infty}|\tilde\rho_\eps\psi_j-\tilde\rho_\eps(r)r^{Q-1}|\,dr\\ &\leq \frac{1}{j}+\int_0^{+\infty}|\tilde\rho_\eps\psi_j-\tilde\rho_\eps(r)r^{Q-1}|\,dr.\end{aligned}\end{equation}
By \eqref{integrability}, the map $r\to \tilde\rho_\eps(r)\,r^{Q-1}$ is in $L^1((0,+\infty))$: hence, by dominated convergence, \begin{equation}\label{esti3}\tilde\rho_\eps\psi_j\to \tilde\rho_\eps\,r^{Q-1}\quad\text{in } L^1((0,+\infty)).\end{equation}
Combining \eqref{esti1}, \eqref{esti2} and \eqref{esti3}, we deduce that $\rho_{\eps,j}\to\rho_\eps$ in $L^1(\G)$ as $j\to\infty$.\medskip\\
Now we show that $\rho_{\eps,j}\in\rm{Lip}_{\G,c}(\G\setminus\{0\}).$ Since ${\rm{spt}}(\widetilde \rho_{\eps, j})={\rm{spt}}( \rho^*_{\eps, j})\subset[\frac{1}{j}-\bar\delta,\,j+\bar\delta]$, it follows that $\widetilde\rho_{\eps,j}\in C^{\infty}_c((0,+\infty))$.
For each $x,y\in\G$
\[|\rho_{\eps,j}(x)-\rho_{\eps,j}(y)|=|\widetilde\rho_{\eps,j}(N(x))-\widetilde\rho_{\eps,j}(N(y))|\leq {\rm{Lip}}(\widetilde \rho_{\eps,j})|N(x)-N(y)|\leq {\rm{Lip}}(\widetilde \rho_{\eps,j})\,d_N(x,y).\]
Moreover, since ${\rm{spt}}(\widetilde\rho_{\eps, j})\subset[\frac{1}{j}-\bar\delta,\,j+\bar\delta]\Subset(0,+\infty)$, this implies ${\rm{spt}}(\rho_{\eps, j})\Subset\G\setminus\{0\}$. \medskip\\
Define
\[V_{\eps,j}(f)(x):=Q\int_\G \frac{f(x\cdot h)-f(x)}{N(h)}\nabla_\G N(h)\rho_{\eps,j}(h)\,dh\]
and let
\[K_{\eps,j}(h):=Q\int_{N(h)}^{+\infty}\frac{\widetilde \rho_{\eps,j}(t)}{t}\,dt\quad\text{for } h\in\G\setminus\{0\}.\]
Since $\rho_{\eps,j}\in\rm{Lip}_{\G,c}(\G\setminus\{0\})$ satisfies \eqref{P1} and \eqref{P2}, we can apply Step 2 and we get, for given $\eps>0,\,j\in\mathbb N$, that
\[V_{\eps,j}(f)=\begin{cases}\nabla_\G f\ast K_{\eps,j}&\text{a.e. in }\G, \;\;\text{if    }f\in W^{1,p}_\G(\G)\\D_\G f\ast K_{\eps,j}&\text{a.e. in }\G, \;\;\text{if  }f\in BV_\G(\G).
\end{cases}\]
The conclusion will be achieved by passing to the limit as $j\to +\infty$ if we prove that
\begin{equation}\label{limit1}
    \lim_{j\to\infty}\|V_{\eps,j}(f)-V_\eps(f)\|_{L^p(\G)}=0\quad\text{for every }f\in W^{1,p}_\G(\G),
\end{equation}
\begin{equation}\label{limit2}
    \lim_{j\to\infty}\|V_{\eps,j}(f)-V_\eps(f)\|_{L^1(\G)}=0\quad\text{for every }f\in BV_\G(\G),
\end{equation}
\begin{equation}\label{limit3}
    \lim_{j\to\infty}\|\nabla_\G f\ast K_{\eps,j}-\nabla_\G f\ast K_\eps\|_{L^p(\G)}=0 \quad\text{for every }f\in W^{1,p}_\G(\G),
\end{equation}
\begin{equation}\label{limit4}
    \lim_{j\to\infty}\|D_\G f\ast K_{\eps,j}-D_\G f\ast K_\eps\|_{L^1(\G)}=0 \quad\text{for every }f\in BV_\G(\G).
\end{equation}
Properties \eqref{limit1} and \eqref{limit2} follow from Lemma \ref{thirdestimate}. Then observe that $K_{\eps,j}\to K_\eps$ in $L^1(\G)$. Indeed, arguing as in Lemma \ref{propertiesofKeps},
\[\begin{split}\|K_{\eps,j}-K_\eps\|_{L^1(\G)}&=Q\int_\G\left|\int_{N(h)}^{+\infty}\frac{\widetilde \rho_{\eps,j}(t)-\tilde\rho_\eps(t)}{t}\,dt\right|\,dh\\&=Q\sigma(S_\G)\int_0^{+\infty}\left|\int_{r}^{+\infty}\frac{\widetilde \rho_{\eps,j}(t)-\tilde\rho_\eps(t)}{t}\,dt\right|r^{Q-1}\,dr\\ &\leq Q\sigma(S_\G)\int_0^{+\infty}\int_{r}^{+\infty}\frac{|\widetilde \rho_{\eps,j}(t)-\tilde\rho_\eps(t)|}{t}r^{Q-1}\,dt\,dr\\ &= Q\sigma(S_\G)\int_0^{+\infty}\int_{0}^{t}\frac{|\widetilde \rho_{\eps,j}(t)-\tilde\rho_\eps(t)|}{t}r^{Q-1}\,dr\,dt\\ &=\sigma(S_\G)\int_0^{+\infty}|\rho_{\eps,j}^*(t)-\tilde\rho_\eps(t)t^{Q-1}|\,dt.\end{split}\]
By \eqref{esti2} and \eqref{esti3}, we get the desired convergence.\medskip\\
By Young inequality (Proposition \ref{young}), if $f\in W^{1,p}_\G(\G)$,
\[\|\nabla_\G f\ast K_{\eps,j}-\nabla_\G f\ast K_\eps\|_{L^p(\G)}=\|\nabla_\G f\ast (K_{\eps,j}-K_\eps)\|_{L^p(\G)} \leq \|\nabla_\G f\|_{L^p(\G)}\|K_{\eps,j}-K_\eps\|_{L^1(\G)}.\]
Hence \eqref{limit3} follows. Let's finally show \eqref{limit4}.
\[\begin{split}|(D_\G f\ast K_{\eps,j})(x)-(D_\G f\ast K_\eps) (x)|&=|(D_\G f\ast(K_{\eps,j}-K_\eps))(x)|\\&\leq \int_\G|K_{\eps,j}(y^{-1}x)-K_\eps(y^{-1}x)|\,d|D_\G f|(y).\end{split}\]
By integrating the previous inequality on $\G$ and by using the left-invariance of the Lebesgue measure, it follows that
\[\begin{split}\|D_\G f\ast K_{\eps,j}-D_\G f\ast K_\eps\|_{L^1(\G)}&\leq \int_\G\int_\G|K_{\eps,j}(y^{-1}x)-K_\eps(y^{-1}x)|\,d|D_\G f|(y)\,dx\\&= \int_\G\int_\G|K_{\eps,j}(y^{-1}x)-K_\eps(y^{-1}x)|\,dx\,d|D_\G f|(y)\\&=\|K_{\eps,j}-K_\eps\|_{L^1(\G)}|D_\G f|(\G).\end{split}\]
Letting $j\to\infty$ in the previous inequality, \eqref{limit4} follows.
\end{proof}
    As a corollary of Theorem \ref{convolution}, we immediately deduce the following central approximation result for the horizontal gradient. Notice that, differently from the case involving $I^\ast_{\eps,p}(f)$ (see \eqref{barbieribbm}), there is no positive constant appearing in the limit and, moreover, the convergence holds without any further assumption on the homogeneous norm $N$.
\begin{corollary}[Nonlocal approximation of the horizontal gradient]\label{approximation}\phantom{}\vspace{0.1 cm}\\
    Assume $(\rho_\eps)_\eps$ is a family of mollifiers on $\G$ satisfying \eqref{P1}$\div$\eqref{P4}.
    \begin{enumerate}
        \item [(i)] Let $f\in W^{1,p}_\G(\G)$ for $1\leq p<+\infty.$ Then
       \[V_\eps(f)\longrightarrow\nabla _\G f\quad\text{in }L^p(\G).\]
      In particular,
       \[
       \|V_\eps(f)\|_{L^p(\G)}\to \||\nabla_\G f|\|_{L^p(\G)}\,.
       \]
        \item[(ii)] Let $f\in BV_\G(\G)$. Then 
       \[V_\eps(f)\rightharpoonup D_\G (f)\quad\text{in }\mathcal M(\G,\R^{m_1}),\]
      and 
       \[
       \|V_\eps(f)\|_{L^1(\G)}\to |D_\G f|(\G)\,.\]
   \end{enumerate}
\end{corollary}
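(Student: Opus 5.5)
The plan is to read off both statements from the representation formula of Theorem \ref{convolution}, combined with the fact, established in Lemma \ref{propertiesofKeps} (v), that $(K_\eps)_\eps$ is an approximation to the identity. Under \eqref{P1}$\div$\eqref{P4} we also have $K_\eps\ge 0$ and, by Lemma \ref{propertiesofKeps} (i) together with \eqref{P3}, $\|K_\eps\|_{L^1(\G)}=1$. The corollary should then reduce to the standard approximation properties of convolution recalled in Lemmas \ref{propconv1} and \ref{propconv2}.

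\textbf{Case (i).} Let $f\in W^{1,p}_\G(\G)$. Theorem \ref{convolution} (i) gives $V_\eps(f)=\nabla_\G f\ast K_\eps$ almost everywhere. Note that $V_\eps(f)$ is indeed well defined, by Lemma \ref{thirdestimate} and Lemma \ref{secondestimate}, using \eqref{comparIIstarfor}.

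We apply Lemma \ref{propconv1} (i) componentwise to $\nabla_\G f\in L^p(\G,\R^{m_1})$ with the approximation to the identity $(K_\eps)_\eps$. This yields $\nabla_\G f\ast K_\eps\to\nabla_\G f$ in $L^p(\G)$. Convergence of the norms then follows from the reverse triangle inequality $\bigl|\|V_\eps(f)\|_{L^p}-\|\nabla_\G f\|_{L^p}\bigr|\le \|V_\eps(f)-\nabla_\G f\|_{L^p}$.

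\textbf{Case (ii).} Let $f\in BV_\G(\G)$. Theorem \ref{convolution} (ii) gives $V_\eps(f)=D_\G f\ast K_\eps$ almost everywhere, where $D_\G f$ is a finite $\R^{m_1}$-valued Radon measure. Lemma \ref{propconv2} (iii) then provides $D_\G f\ast K_\eps\rightharpoonup D_\G f$ in $\mathcal M(\G,\R^{m_1})$.

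Since $\|K_\eps\|_{L^1(\G)}=1$, the second part of Lemma \ref{propconv2} (iii) also gives $\|D_\G f\ast K_\eps\|_{L^1(\G)}\to |D_\G f|(\G)$. To check this directly, argue in two steps:
\begin{itemize}
\item the upper bound $\|D_\G f\ast K_\eps\|_{L^1}\le |D_\G f|(\G)$ follows from Lemma \ref{propconv2} (i), or from Fubini and the left-invariance of the Lebesgue measure;
\item the lower bound $\liminf_{\eps\to0}\|D_\G f\ast K_\eps\|_{L^1}\ge |D_\G f|(\G)$ follows from the lower semicontinuity of the total variation under weak convergence.
\end{itemize}

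\textbf{Main obstacle.} The only delicate point is the convolution order and the side on which $K_\eps$ acts. Here the measure sits on the left, $(\nu\ast K_\eps)(x)=\int K_\eps(y^{-1}x)\,d\nu(y)$. Lemmas \ref{propconv1} and \ref{propconv2} state convergence for both orders, so this causes no difficulty. If needed, one can also use that $K_\eps$ is radial, hence $K_\eps(h^{-1})=K_\eps(h)$, since $N(h^{-1})=N(h)$.
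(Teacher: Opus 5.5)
Your proposal is correct and matches the paper's proof: you combine the representation formula of Theorem \ref{convolution} with the fact that $(K_\eps)_\eps$ is an approximation to the identity (Lemma \ref{propertiesofKeps} (v)), then invoke Lemma \ref{propconv1} (i) in the Sobolev case and Lemma \ref{propconv2} (iii) in the BV case. Your additional details only spell out what the paper cites implicitly. These are the reverse triangle inequality for the norms, the normalization $\|K_\eps\|_{L^1(\G)}=1$ from \eqref{P3}, and the upper bound via Lemma \ref{propconv2} (i) together with lower semicontinuity of the total variation.
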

    \begin{proof}
 \begin{enumerate}
     \item [(i)] From Lemma \ref{propconv1}, Lemma \ref{propertiesofKeps} (v) and Theorem \ref{convolution}  
     \[V_\eps(f)=(\nabla_\G f\ast K_\eps)\to \nabla_\G f\quad\text{in }L^p(\G);\]
     \item [(ii)] From Lemma \ref{propconv2}, Lemma \ref{propertiesofKeps} (v) and Theorem \ref{convolution}  
     \[V_\eps(f)=(D_\G f\ast K_\eps)\rightharpoonup D_\G f\quad\text{in }\mathcal M(\G,\R^{m_1})\]
     and 
     \[
       \|V_\eps(f)\|_{L^1(\G)}\to |D_\G f|(\G)\,.\qedhere
       \]
 \end{enumerate}  
 
    \end{proof}
\begin{remark} In addition to Corollary \ref{approximation}, we can actually prove pointwise convergence almost everywhere. 
More precisely, it holds that \[\begin{split}V_\eps(f)\to\nabla_\G f \quad\,&\text{ a.e. if }\,f\in W^{1,p}_\G(\G), \\V_\eps(f)\to \nabla_\G^\text{ac} f\quad&\text{ a.e. if }\,f\in BV_\G(\G).\end{split}\]

The proof can be carried out as in the Euclidean case (see \cite[Proposition 1.10 and 1.11]{brezis}), using integration in polar coordinates (Theorem \ref{polarcoordinates}) and Lebesgue-Besicovitch differentiation theorem for singular measures on Carnot groups (see \cite[Lemma 2.1]{magnani}).

In addition (see also \cite[Remark 1.13]{brezis}), being $K_\eps$ an approximation to the identity, it follows by a slight improvement of Lemma \ref{propconv1} (ii) that   
\[\begin{split}V_\eps(f)\to\nabla_\G f \quad\,&\text{ uniformly on $\G$, if }\,f\in C^1_{\G,c}(\G), \\V_\eps(f)(x)\to \nabla_\G f(x)\quad&\text{ for each $x\in\G$, if }\,f\in C^1_{\G,b}(\G)\text{ and }\nabla_\G f\in L^\infty(\G).\end{split}\]

\end{remark}
\begin{remark}\label{gradeuclidapprox}
   Notice that $V_\eps(f)$ depends on the chosen homogeneous norm $N$, while the limit value $\nabla_\G f$ (or $D_\G f$) does not. Moreover, our results apply also to the Euclidean space $\G=\R^n$ equipped with an arbitrary norm $N$: hence we actually extend the case discussed in \cite{brezis} for the standard Euclidean norm. 
   Note that, in this case, the horizontal gradient $\nabla_\G\equiv\nabla$, where $\nabla$ denotes the classical Euclidean gradient, and the nonlocal gradient has the form
   \[
   V_\eps(f)(x):= n\int_{\R^n} \frac{f(x+h)-f(x)}{N(h)}\nabla N(h)\rho_\eps(h)\,dh\,.
   \]
\end{remark}
    We are now interested in a partial converse of Corollary \ref{approximation}: knowing that $V_\eps(f)$ are equibounded in $L^p(\G)$, then one can infer that $f\in W^{1,p}_\G(\G)$ (or $f\in BV_\G(\G)$ in the case $p=1$). However, under the assumptions $\eqref{P1}\div\eqref{P4}$, $V_\eps(f)$ might not be well-defined for a general $f\in L^p(\G)$. Hence, for the time being, we will assume a more restrictive condition on the family of mollifiers $(\rho_\eps)_\eps$, which is motivated by Theorem \ref{convolution}, Steps 1-2:
\begin{equation}\label{P5}\tag{P5}
        \rho_\eps\in L^\infty(\G), \quad \text{spt}(\rho_\eps)\subset B(0,R)\setminus B(0,r_\eps) \text{ for some }0<r_\eps<R.
    \end{equation}
\begin{lemma}\label{Vepswelldefined}
    Assume $(\rho_\eps)_\eps$ satisfy \eqref{P1} and \eqref{P5}. Then, for every $f\in L^{1}_{\rm loc}(\G)$, $\widetilde V_\eps(f)\in L^1_{\rm loc}(\G)$. In particular, by Lemma \ref{firstestimate}, $V_\eps(f)$ is well-defined a.e. and $V_\eps(f)\in L^1_{\rm loc}(\G)$.
\end{lemma}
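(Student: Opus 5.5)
We have to show that $\widetilde V_\eps(f)\in L^1(K)$ for every compact $K\subset\G$. Because the integrand in $\widetilde V_\eps(f)$ is nonnegative, Tonelli's theorem lets us swap the order of integration freely. The key observation is that \eqref{P5} removes the singularity at $h=0$: on ${\rm spt}(\rho_\eps)$ we have $N(h)\ge r_\eps$. Moreover $|\nabla_\G N|\le C$ a.e., since $N$ is Lipschitz. Hence the weight
\[
w_\eps(h):=\frac{|\nabla_\G N(h)|}{N(h)}\,\rho_\eps(h)
\]
is bounded by $C\,r_\eps^{-1}\|\rho_\eps\|_{L^\infty(\G)}$ and vanishes outside $B(0,R)$. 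In particular $w_\eps\in L^\infty(\G)$ with compact support.

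Fix a compact $K$ and split using $|f(x\cdot h)-f(x)|\le |f(x\cdot h)|+|f(x)|$:
\[
\int_K \widetilde V_\eps(f)(x)\,dx\le \int_K\int_{B(0,R)}|f(x\cdot h)|\,w_\eps(h)\,dh\,dx+\int_K|f(x)|\Big(\int_\G w_\eps(h)\,dh\Big)dx .
\]
The second term is at most $\|w_\eps\|_{L^1(\G)}\|f\|_{L^1(K)}<\infty$. For the first term, swap the integrals and bound $w_\eps$ by its sup norm. For fixed $h\in B(0,R)$, left translation and invariance of the Haar measure give $\int_K|f(x\cdot h)|\,dx=\int_{K\cdot h}|f(y)|\,dy$. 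Every set $K\cdot h$ with $h\in \overline{B(0,R)}$ lies in the compact set $K':=K\cdot\overline{B(0,R)}$, which is compact as the image of a compact set under the continuous group product. So the first term is at most $\|w_\eps\|_{L^\infty}\,|B(0,R)|\,\|f\|_{L^1(K')}<\infty$.

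This shows $\widetilde V_\eps(f)\in L^1_{\rm loc}(\G)$, which also gives measurability and a.e. finiteness. Lemma \ref{firstestimate} then yields that $V_\eps(f)$ is well defined a.e. and belongs to $L^1_{\rm loc}(\G)$.

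There is no real obstacle here. The only points needing care are the measurability of $(x,h)\mapsto f(x\cdot h)$, which follows because the group product is continuous, and the compactness of $K'$. Both are routine.
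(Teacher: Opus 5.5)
Your argument is correct and is essentially the paper's proof: split $|f(x\cdot h)-f(x)|\le|f(x\cdot h)|+|f(x)|$, apply Tonelli, bound $|\nabla_\G N(h)|/N(h)$ by $\|\nabla_\G N\|_{L^\infty(\G)}/r_\eps$ on ${\rm spt}(\rho_\eps)$, and control the translated term by $\|f\|_{L^1}$ on the compact set $K\cdot\overline{B(0,R)}$. The differences are cosmetic: the paper bounds the first term by $\|\rho_\eps\|_{L^1(\G)}$ rather than your $\|\rho_\eps\|_{L^\infty(\G)}\,|B(0,R)|$, so it does not need the boundedness of $\rho_\eps$ at that step; also, the substitution $y=x\cdot h$ uses right-invariance (not left-invariance) of the Haar measure, which holds on $\G$.
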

\begin{proof}
    Let $\Omega\Subset\G$. It follows by an easy computation that
   
        \begin{align*}
            \int_\Omega\widetilde V_\eps(f)(x)\,dx&=\int_\Omega\int_\G\frac{|f(x\cdot h)-f(x)|}{N(h)}|\nabla_\G N(h)|\,\rho_\eps(h)\,dh\,dx
            \\ & \leq \frac{\|\nabla_\G N\|_{L^\infty(\G)}}{r_\eps}(\|f\|_{L^1(\Omega)}+\|f\|_{L^1(\Omega\cdot R_\eps)})\int_\G\rho_\eps(h)\,dh<+\infty.\qedhere
        \end{align*}
\end{proof}
We now show that the (weak) convergence of the nonlocal gradients $V_\eps(f)$ in $L^p(\G)$ is also a sufficient condition for a function $f$ to lie in $W^{1,p}_\G(\G)$ or $BV_\G(\G)$.
\begin{lemma}\label{converseimplication}
    Assume $(\rho_\eps)_\eps$ satisfy conditions \eqref{P1}$\div$\eqref{P5}. Let $f\in L^p(\G)$, for $1\leq p<+\infty$. \begin{itemize}
        \item Suppose there exists $w\in (L^p(\G))^{m_1}$ such that \[V_\eps(f)\rightharpoonup w\quad \text{ in } (L^p(\G))^{m_1}.\] Then $f\in W^{1,p}_\G(\G)$ and $\nabla_\G f=w$ a.e. in $\G$.
        \item Suppose there exists a finite Radon measure $\mu\in \mathcal M(\G,\R^{m_1})$ such that \[V_\eps(f)\rightharpoonup \mu\quad\text{ in } \mathcal M(\G,\R^{m_1}).\] Then $f\in BV_\G(\G)$ and $D_\G f=\mu$.
    \end{itemize}
\end{lemma}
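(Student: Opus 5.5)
The plan is a duality argument. We test $V_\eps(f)$ against a smooth horizontal field and move the nonlocal difference onto the test function, obtaining a nonlocal divergence that converges to $-\mathrm{div}_\G\varphi$. Fix $\varphi\in C^1_c(\G,H\G)$, which we identify with $\varphi=(\varphi_1,\dots,\varphi_{m_1})\in C^1_c(\G,\R^{m_1})$. Under \eqref{P5} the integrand defining $V_\eps(f)$ is absolutely integrable on $\mathrm{spt}\,\varphi\times\G$. Indeed $\rho_\eps(h)/N(h)\le \|\rho_\eps\|_\infty/r_\eps$ is supported in $B(0,R)$, and $f\in L^1_{\rm loc}$; compare Lemma~\ref{Vepswelldefined}. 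Fubini is therefore allowed. We split
\[
\int_\G V_\eps(f)\cdot\varphi\,dx = Q\int_\G\int_\G f(x\cdot h)\,\varphi(x)\cdot\frac{\nabla_\G N(h)}{N(h)}\rho_\eps(h)\,dh\,dx - Q\int_\G\int_\G f(x)\,\varphi(x)\cdot\frac{\nabla_\G N(h)}{N(h)}\rho_\eps(h)\,dh\,dx .
\]
Each piece is separately finite thanks to the cutoff $r_\eps$. In the first piece we substitute $y=x\cdot h$, using left invariance of Lebesgue measure, so $x=y\cdot h^{-1}$. Then we use $N(h^{-1})=N(h)$ and $\rho_\eps(h^{-1})=\rho_\eps(h)$, and change $h\mapsto h^{-1}$. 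This is valid because inversion $h\mapsto -h$ preserves Lebesgue measure. We obtain
\[
\int_\G V_\eps(f)\cdot\varphi\,dx = -\int_\G f(y)\,D_\eps\varphi(y)\,dy ,
\qquad D_\eps\varphi(y):=Q\int_\G \frac{\varphi(y\cdot h)\cdot \nabla_\G N(h^{-1}) + \varphi(y)\cdot\nabla_\G N(h)}{N(h)}\rho_\eps(h)\,dh .
\]
Next we need the symmetry of $\nabla_\G N$ under inversion. Since $N(h^{-1})=N(h)$ and inversion maps horizontal directions to their negatives (in exponential coordinates $h^{-1}=-h$, and the horizontal component of the Pansu differential reverses sign), I expect $\nabla_\G N(h^{-1})=-\nabla_\G N(h)$ for a.e. $h$. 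I would verify this via P-differentiability of $N$ at $h$ and at $h^{-1}$: the map $\iota(x)=x^{-1}$ satisfies $N\circ\iota=N$, and the horizontal part of its differential is $-\mathrm{Id}$. This turns $D_\eps\varphi$ into
\[
D_\eps\varphi(y)= -\,Q\int_\G\frac{(\varphi(y\cdot h)-\varphi(y))\cdot\nabla_\G N(h)}{N(h)}\rho_\eps(h)\,dh = -\sum_i V_\eps(\varphi_i)_i(y),
\]
so that $\int V_\eps(f)\cdot\varphi=\int f\,\sum_i V_\eps(\varphi_i)_i$.

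Now we apply the earlier theory to $\varphi_i\in C^1_c\subset W^{1,q}_\G$ for every $q$. By Corollary~\ref{approximation}, $V_\eps(\varphi_i)\to\nabla_\G\varphi_i$ in $L^q(\G)$ for every $1\le q<\infty$. We use it with $q=p'$ when $p>1$. Hence $\sum_i V_\eps(\varphi_i)_i\to \sum_i X_i\varphi_i=\mathrm{div}_\G\varphi$ in $L^{p'}$. For $p=1$ we need convergence that is uniform, or at least weak$^*$ in $L^\infty$, and bounded. Here I would invoke the representation of Theorem~\ref{convolution}, $V_\eps(\varphi_i)=\nabla_\G\varphi_i\ast K_\eps$, together with Lemma~\ref{propconv1}(ii) and Lemma~\ref{propertiesofKeps}(v). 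This gives uniform convergence on compact sets and the bound $\|\nabla_\G\varphi_i\|_\infty$. For the $L^1$ pairing we also need tails to be controlled, and under \eqref{P5} the supports stay inside $\mathrm{spt}\varphi\cdot B(0,R)^{-1}$, a fixed compact set. Passing to the limit gives
\[
\lim_{\eps\to0}\int_\G V_\eps(f)\cdot\varphi\,dx=\int_\G f\,\mathrm{div}_\G\varphi\,dx .
\]

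Finally we identify the limit. In the Sobolev case, weak convergence in $L^p$ (tested against $\varphi\in C^1_c\subset L^{p'}$) gives $\int w\cdot\varphi=\int f\,\mathrm{div}_\G\varphi$. This says exactly $\nabla_\G f=-(-w)$, i.e. $X_if=w_i$ weakly, so $f\in W^{1,p}_\G(\G)$. In the measure case, weak$^*$ convergence against $\varphi\in C^1_c\subset C^0_c$ gives $\int\varphi\cdot d\mu=\int f\,\mathrm{div}_\G\varphi$. Taking the supremum over $|\varphi|\le1$ yields $|D_\G f|(\G)\le|\mu|(\G)<\infty$, so $f\in BV_\G$ and $D_\G f=\mu$ by Theorem~\ref{structurethm}. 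Here I watch for the sign convention: the integration by parts formula is $\int f\,\mathrm{div}\varphi=-\int\varphi\cdot dD_\G f$, so the signs must be tracked carefully, and the inversion step above is what fixes the sign. The main obstacle is the inversion identity $\nabla_\G N(h^{-1})=-\nabla_\G N(h)$ together with the bookkeeping of the change of variables. If that identity were delicate, a fallback is to bypass it and show directly that $D_\eps\varphi\to-\mathrm{div}_\G\varphi$ via a Taylor expansion of $\varphi$ along $h$.
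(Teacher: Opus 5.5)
There is a genuine gap. Your whole reduction rests on the identity $\nabla_\G N(h^{-1})=-\nabla_\G N(h)$. You flagged it as the main obstacle, but it is not merely delicate: it is false as soon as $\G$ is non-commutative. Inversion is an anti-automorphism, so $N(h\cdot\exp(tX_j))=N(\exp(-tX_j)\cdot h^{-1})$. Differentiating $N=N\circ\iota$ along $X_j$ at $h$ therefore gives minus the derivative of $N$ at $h^{-1}$ along the \emph{right}-invariant field, not along $X_j$. Moreover, $\iota$ is not Pansu differentiable (not even Lipschitz for $d_N$), so ``its horizontal differential is $-\mathrm{Id}$'' has no content. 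Concretely, on $\hh^1$ with the Cygan--Kor\'anyi norm of Example \ref{H1}, $X_1N(h)=\bigl((h_1^2+h_2^2)h_1-4h_2h_3\bigr)N(h)^{-3}$. At $h=(0,1,1)$ this gives $X_1N(h^{-1})=X_1N(h)=-4\cdot 17^{-3/4}\neq -X_1N(h)$. Hence the exact identity between $\int_\G V_\eps(f)\cdot\varphi$ and $\int_\G f\sum_i V_\eps(\varphi_i)_i$ fails for $\eps>0$, and you cannot reduce to Corollary \ref{approximation} applied to the $\varphi_i$.

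There is also a sign slip: the first term of your $D_\eps\varphi$ should be $-\varphi(y\cdot h)\cdot\nabla_\G N(h^{-1})$. Even in $\R^n$, where the inversion identity holds, the correct relation is $\int V_\eps(f)\cdot\varphi=-\int f\sum_i V_\eps(\varphi_i)_i$. Your version leads to $\int w\cdot\varphi=\int f\,\mathrm{div}_\G\varphi$, i.e.\ $\nabla_\G f=-w$; writing ``$-(-w)$'' hides this rather than fixing it.

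The paper never inverts $\nabla_\G N$. Under \eqref{P5}, Lemma \ref{propertiesofKeps}(iv) together with $\int_\G\nabla_\G K_\eps=0$ gives $V_\eps(f)(x)=-\int_\G f(x\cdot h)\,\nabla_\G K_\eps(h)\,dh$. After Fubini, it integrates by parts in the kernel variable, so the horizontal derivative falls on $z\mapsto\varphi(y\cdot z^{-1})$. This yields $\int_\G V_\eps(f)\varphi=\int_\G f\,\Phi_\eps$, with $\Phi_\eps(y)$ a $K_\eps$-average of that derivative. Since $(K_\eps)_\eps$ is an approximation to the identity, $\Phi_\eps(y)$ tends to the derivative at $z=0$, which equals $-\nabla_\G\varphi(y)$. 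The left/right discrepancy is thus only resolved at the origin, where it is trivial, and the uniform bounds and common support from \eqref{P5} allow the passage to the limit.

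Your Taylor fallback can also be completed, but again without the inversion identity. Using $\int_\G\nabla_\G N(h)\,\rho_\eps(h)\,N(h)^{-1}\,dh=-Q^{-1}\int_\G\nabla_\G K_\eps=0$, one gets $\int_\G V_\eps(f)\cdot\varphi=\int_\G f\,E_\eps$ with $E_\eps(y)=Q\int_\G \frac{(\varphi(y\cdot h)-\varphi(y))\cdot\nabla_\G N(h^{-1})}{N(h)}\rho_\eps(h)\,dh$. This kernel is bounded by $Q\,\mathrm{Lip}(\varphi)\|\nabla_\G N\|_{L^\infty}$. A first-order expansion and polar coordinates then reduce $E_\eps\to-\mathrm{div}_\G\varphi$ to the moment identity $Q\fint_{S_\G}y_k\,X_jN(y)\,d\sigma(y)=\delta_{jk}$ for $j,k\le m_1$. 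This is exactly the reconstruction formula \eqref{formulaforv}, which the paper only obtains a posteriori. You would have to prove it directly, for instance by integrating $X_j(h_k)\,\psi(N(h))$ by parts and passing to polar coordinates, and your proposal does not supply this.
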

\begin{proof}
  It follows from Lemma \ref{Vepswelldefined} that $V_\eps(f)$ is well defined. By Lemma \ref{propertiesofKeps} (iv) and a change of variable, we can write
  \[V_\eps(f)(x)=-\int_\G f(x\cdot h)\nabla_\G K_\eps(h)\,dh=-\int_\G f(y)\nabla_\G K_\eps(x^{-1}y)\,dy.\]
  By Fubini-Tonelli theorem, for every $\varphi\in C^{\infty}_c(\G)$
  \[\int_\G V_\eps(f)(x)\varphi(x)\,dx=-\int_\G f(y)\left(\int_\G\nabla _\G K_\eps(x^{-1}y)\varphi(x) \,dx\right)dy.\]
  If we denote
  \[
\begin{alignedat}{2}
\varphi_y : \mathbb{G}\to \mathbb{R}
&\hspace{2em} \varphi_y(z) := \varphi\bigl(y\cdot z^{-1}\bigr),
\quad z\in\mathbb{G},\\
\Phi : \mathbb{G}\times\mathbb{G}\to \mathbb{R}
&\hspace{2em} \Phi(y,w) := \nabla_{\mathbb{G}} \varphi_y\bigl(y\cdot w^{-1}\bigr),
\quad y,w\in\mathbb{G},\\
\Phi_\varepsilon : \mathbb{G}\to \mathbb{R}
&\hspace{2em} \Phi_\varepsilon(y) := (K_\varepsilon * \Phi(y,\cdot))(y),
\quad y\in\mathbb{G},
\end{alignedat}\]
then, for every $y\in \G$, 
\begin{align*}
    \int_\G\nabla _\G K_\eps(x^{-1}y)\varphi(x) \,dx&=\int_\G\nabla_\G K_\eps (z)\varphi(yz^{-1})\,dz=\int_\G \nabla_\G K_\eps(z)\varphi_y(z)\,dz\\ &=-\int_\G K_\eps(z)\nabla_\G \varphi_y(z)\,dz= -\int_\G K_\eps(yw^{-1})\nabla_\G \varphi_y(yw^{-1})\,dw\\ &=-\int_\G K_\eps(yw^{-1})\Phi(y,w)\,dw=- (K_\eps*\Phi(y,\cdot))(y)=-\Phi_\eps(y).
\end{align*}
Hence we find that
\begin{equation}\label{eq1}\int_\G f(y)\Phi_\eps(y)\,dy=\int_\G V_\eps(f)(x)\varphi(x)\,dx.\end{equation}
The conclusion will follow by passing to the limit in \eqref{eq1}.
Since $\Phi(y,\cdot)\in C^{0}_b(\G)$, it follows that $K_\varepsilon * \Phi(y,\cdot)\to \Phi(y,\cdot)$ pointwise on $\G$ (by Lemma \ref{propertiesofKeps} (v) and Lemma \ref{propconv1} (ii)). In particular, $\Phi_\eps(y)=(K_\varepsilon * \Phi(y,\cdot))(y)\to \Phi(y,y)=\nabla_\G\varphi_y(0)$. Let us now show that 
\[\nabla_\G \varphi_y(0)=-\nabla_\G\varphi (y).\]
First, if $\widetilde\varphi_y:\G\to \R$ is defined by $\widetilde \varphi_y(z):=\varphi(y\cdot z)$, by the left-invariance of $\nabla_\G$ it follows that $\nabla_\G\widetilde\varphi_y(0)=\nabla_\G\varphi(y)$. Second, observe that $\varphi_y(z)=\widetilde\varphi_y(z^{-1})$. Hence, it also holds that $\nabla_\G\varphi_y(0)=-\nabla_\G\widetilde\varphi_y(0)$, since the horizontal vector fields $X_1,\dots, X_{m_1}$ can be taken to coincide with the standard partial derivatives at the origin; see \cite[Proposition 2.2]{FSSC03}.\medskip\\
We finally look for a dominating summable function for passing to the limit on the left-hand side of \eqref{eq1}. Let $\widetilde R$ be such that ${\mathrm{spt}}(\varphi)\subset B(\widetilde R)$. By definition of $\varphi_y$ and $\Phi$, 
\[\mathrm{spt}(\Phi)\subset\{(y,w)\in\G\times\G: ywy^{-1}\in B(\widetilde R)\}=: S_1.\]
If $\widetilde K_\eps(y,w):=K_\eps(yw^{-1})$, then, by property \eqref{P5} and Lemma \ref{propertiesofKeps} (iii),
\[\mathrm{spt}(\widetilde K_\eps)\subset\{(y,w)\in\G\times\G: yw^{-1}\in B(R)\}=: S_2.\]
This implies that the support of the function $\G\times\G\ni(y,w)\mapsto K_\eps(yw^{-1})\Phi(y,w)$ is compact: in fact, by the triangular inequality, $S_1\cap S_2$ is bounded in $\G\times \G$. Let $\overline R>0$ be such that ${\mathrm {spt}}(\widetilde K_\eps\Phi)\subset B(\overline R)\times B( \overline R)$. Hence we can estimate
\[|\Phi_\eps(y)|=|(K_\eps*\Phi(y,\cdot))(y)|\leq \int_{B(\overline R)} K_\eps(yw^{-1})|\Phi(y,w)|\,dw\leq \sup_{B(\overline R)\times B(\overline R)}|\Phi|\int_\G K_\eps(z)\,dz\leq M.\]
Notice here that $\Phi_\eps$ has compact support in $B(\overline R)$ and that $\Phi\in C^0(\G\times \G).$\medskip\\
Since $\Phi_\eps$ are uniformly bounded and have equibounded support, we can apply Lebesgue's dominated convergence theorem and pass to the limit on the left-hand side in \eqref{eq1}. We use instead our weak-convergence hypothesis for the limit on the right-hand side. We get
\[\int_\G f(y)\nabla_\G\varphi(y)\,dy=\begin{cases}
 -\int_\G w(x)\varphi(x)\,dx &\text{in the case }w\in (L^p(\G))^{m_1} \\ -\int_\G\varphi(x)\,d\mu(x) &\text{in the case }\mu\in \mathcal M(\G, \R^{m_1})  
\end{cases}\]
and the conclusion follows.
\end{proof}
\begin{corollary}\label{characterization}
 Let $\G$ be a Carnot group, endowed with a homogeneous norm $N$. Assume $(\rho_\eps)_\eps$ is a sequence of mollifiers satisfying assumptions \eqref{P1}$\div$\eqref{P5}and let $f\in L^p(\G)$\vspace{0.2 cm}. \\ If $1<p<+\infty$,
  \[f\in W^{1,p}_\G(\G)\Longleftrightarrow \limsup_{\eps\to 0}\|V_\eps(f)\|_{L^p(\G)}<+\infty.\]
  If $p=1$, then 
  \[f\in BV_\G(\G)\Longleftrightarrow \limsup_{\eps\to 0}\|V_\eps(f)\|_{L^1(\G)}<+\infty.\]
\end{corollary}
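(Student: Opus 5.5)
The forward implications will follow from what is already proved, and the converse from weak compactness combined with Lemma \ref{converseimplication}.

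\emph{Forward direction.} Let $f\in W^{1,p}_\G(\G)$, or $f\in BV_\G(\G)$ when $p=1$. Corollary \ref{approximation} gives $\|V_\eps(f)\|_{L^p(\G)}\to \|\nabla_\G f\|_{L^p(\G)}$, respectively $\|V_\eps(f)\|_{L^1(\G)}\to |D_\G f|(\G)$. Hence the $\limsup$ is finite. We only need $V_\eps(f)$ to be well defined, and this is guaranteed by Lemma \ref{Vepswelldefined} under \eqref{P5} (or by Lemma \ref{thirdestimate}). Note also that \eqref{P3} gives $\|\rho_\eps\|_{L^1(\G)}=1$, so Lemma \ref{thirdestimate} already yields a uniform bound directly.

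\emph{Converse, case $1<p<\infty$.} Assume $\limsup_{\eps\to0}\|V_\eps(f)\|_{L^p(\G)}<+\infty$. By Lemma \ref{Vepswelldefined}, $V_\eps(f)$ is well defined and lies in $L^1_{\rm loc}(\G)$. Choose a sequence $\eps_k\to0$ along which the norms are bounded by some $M$. Since $(L^p(\G))^{m_1}$ is reflexive, a subsequence satisfies $V_{\eps_k}(f)\rightharpoonup w$ weakly in $(L^p(\G))^{m_1}$. We then apply the argument of Lemma \ref{converseimplication} along this subsequence. That proof only passes to the limit in identity \eqref{eq1} for fixed $\varphi\in C^\infty_c(\G)$, so it works for any sequence $\eps_k\to0$. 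It gives $f\in W^{1,p}_\G(\G)$ with $\nabla_\G f=w$.

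\emph{Converse, case $p=1$.} Here $L^1$ is not reflexive, so we regard $V_{\eps_k}(f)\,\mathcal L^n$ as $\R^{m_1}$-valued Radon measures with total variation at most $M$. Weak$^*$ compactness of bounded sets of measures (Banach--Alaoglu in $(C_0(\G,\R^{m_1}))^*$) gives a subsequence converging locally weakly$^*$ to a finite measure $\mu$, with $|\mu|(\G)\le M$ by lower semicontinuity. Lemma \ref{converseimplication}, applied along the subsequence with test functions $\varphi\in C^\infty_c\subset C^0_c$, then yields $f\in BV_\G(\G)$ and $D_\G f=\mu$.

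\emph{Main point to check.} The only real issue is that Lemma \ref{converseimplication} is phrased for the whole family $\eps\to0$, while compactness only gives a subsequence. I will state explicitly that its proof is sequential: the domination of $\Phi_\eps$ and the pointwise convergence $\Phi_\eps\to-\nabla_\G\varphi$ hold for every $\eps$, so nothing changes along $\eps_k$.
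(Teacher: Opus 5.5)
Your proposal is correct and follows the paper's proof. The forward direction comes from Lemma \ref{thirdestimate} (your appeal to Corollary \ref{approximation} works equally well), and the converse extracts a weakly convergent subsequence and applies Lemma \ref{converseimplication} along it. Your explicit handling of $p=1$ through weak$^*$ compactness of finite $\R^{m_1}$-valued measures, and your remark that the lemma's dominated-convergence argument works along any sequence $\eps_k\to0$, spell out details the paper leaves implicit.
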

\begin{proof}
    We only discuss the case $1<p<+\infty$. The case $p=1$ is treated in the same way. The implication \[f\in W^{1,p}_\G(\G)\Longrightarrow  \limsup_{\eps\to 0}\|V_\eps(f)\|_{L^p(\G)}<+\infty\] follows by Lemma \ref{thirdestimate}. Assume now $\limsup_{\eps\to 0}\|V_\eps(f)\|_{L^p(\G)}<+\infty$. Then, up to a subsequence, we can assume $V_\eps(f)\rightharpoonup w$ in $(L^p(\G))^{m_1}$. By Lemma \ref{converseimplication}, we deduce that $f\in W^{1,p}_\G(\G)$.
\end{proof} 
We are now in order to show, under assumptions \eqref{P1}$\div$\eqref{P4}, the  characterization of real-valued Sobolev and BV functions, defined on a Carnot group $\G$, with respect to the nonlocal gradients, that is Theorem \ref{firstcharacterization}.

\begin{proof}[Proof of Theorem \ref{firstcharacterization}]
     We only deal with the case $1<p<+\infty$, since the case $p=1$ is treated in the same way. The implications
     \[f\in W^{1,p}_\G(\G)\Longrightarrow \limsup_{\eps\to 0}I^*_{\eps, p}(f)<+\infty\Longrightarrow \limsup_{\eps\to 0}I_{\eps, p}(f)<+\infty\Longrightarrow  
     \limsup_{\eps\to 0}\|\widetilde V_\eps(f)\|_{L^p(\G)}<+\infty\] follow by Lemma \ref{thirdestimate}, \eqref{comparIIstarfor} and Lemma \ref{secondestimate} respectively.

     Assume now \begin{equation}\label{boundednessassumption}\limsup_{\eps\to 0}\|\widetilde V_\eps(f)\|_{L^p(\G)}<+\infty.\end{equation} We approximate $(\rho_\varepsilon)_\varepsilon$ with another sequence of mollifiers $(\widehat\rho_\varepsilon)_\varepsilon\subset L^\infty(\G)$ still satisfying the properties  $\eqref{P1}\div\eqref{P4}$. We construct this sequence in the following way: for every $\varepsilon>0$, let $M_\varepsilon>0$ be such that 
     $\int_\G\min(\rho_\eps, M_\eps)\geq 1/2.$
     We now let \[\widehat \rho_\eps:=\frac{\min(\rho_\eps, M_\eps)}{\|\min(\rho_\eps, M_\eps)\|_{L^1(\G)}}.\]
     It is then clear that $(\widehat\rho_\eps)_\eps$ satisfy $\eqref{P1}\div\eqref{P3}$; we now show $\eqref{P4}$. If $\delta>0$ is fixed, then
     \[\int_{B^c(0,\delta)}\widehat \rho_\eps(h)\,dh=\frac{1}{\|\min(\rho_\eps, M_\eps)\|_{L^1(\G)}}\int_{B^c(0,\delta)}\min(\rho_\eps, M_\eps)\,dh\leq 2\int_{B^c(0,\delta)}\rho_\eps\stackrel{\eps\to 0}\longrightarrow 0.\]
     Using the same argument, it follows that 
     \begin{equation}\label{bound}\int_\G\frac{|f(x\cdot h)-f(x)|}{N(h)}|\nabla_\G N(h)|\,\widehat\rho_\eps(h)\,dh\leq 2\int_\G\frac{|f(x\cdot h)-f(x)|}{N(h)}|\nabla_\G N(h)|\,\rho_\eps(h)\,dh.\end{equation}
     As a second step, we now approximate $(\widehat\rho_\eps)_\eps$ with a further sequence $(\bar\rho_\eps)_\eps$, which also satisfies assumption \eqref{P5}.
     Since $(\widehat\rho_\eps)_\eps$ satisfies \eqref{P4}, we can assume that $\int_{B^c(0,1)}\widehat \rho_\eps\leq 1/4$ for every $\eps>0$. Choose then $m_\eps>0$ small enough, so that $\|\widehat\rho_\eps\|_{L^1(B(0,m_\eps))}\leq 1/4$. We then define 
\[\bar\rho_\eps:=\frac{\widehat\rho_\eps\chi_{C(m_\eps,1)}}{\|\widehat\rho_\eps\|_{L^1(C(m_\eps,1))}},\] where \[C(r,R):=\{h\in\G:r\leq N(h)\leq R\}\quad\text{for every }0<r<R.\]     It is now clear that $(\bar\rho_\eps)_\eps$ satisfies assumptions \eqref{P1},\eqref{P2},\eqref{P3},\eqref{P5}. In order to verify \eqref{P4}, notice that for every $\delta>0$,
     \[\int_{B^c(0,\delta)}\bar \rho_\eps(h)\,dh\leq\frac{1}{\|\widehat\rho_\eps\|_{L^1(C(m_\eps,1))}}\int_{B^c(0,\delta)}\widehat\rho_\eps\,dh\leq 2\int_{B^c(0,\delta)}\widehat\rho_\eps\stackrel{\eps\to 0}\longrightarrow 0.\]
     A similar argument shows that, recalling also \eqref{bound},
 \[\begin{split}\int_\G\frac{|f(x\cdot h)-f(x)|}{N(h)}|\nabla_\G N(h)|\,\bar\rho_\eps(h)\,dh&\leq 2\int_\G\frac{|f(x\cdot h)-f(x)|}{N(h)}|\nabla_\G N(h)|\,\widehat\rho_\eps(h)\,dh\\ &\leq 4\int_\G\frac{|f(x\cdot h)-f(x)|}{N(h)}|\nabla_\G N(h)|\,\rho_\eps(h)\,dh.\end{split}\]
   Combining this estimate with \eqref{boundednessassumption} and Lemma \ref{firstestimate}, we can apply Corollary \ref{characterization} to deduce that $f\in W^{1,p}_\G(\G)$. The strong convergence in $L^p$ of the non-local gradients $V_\eps(f)$ follows from Corollary \ref{approximation}.  
\end{proof}

\section{Characterizations via Taylor approximation and some consequences}\label{section5}

In this section, we discuss another characterization of Sobolev and BV functions defined on a Carnot group, in terms of a first order Taylor approximation. This notion has been introduced by Spector \cite{spector},\cite{Spector16} for functions defined on $\R^n$, in the spirit of Calderón-Zygmund differentiability. 
The notion of $L^p$ differentiability (of order $k$) for a function defined on the Euclidean space was previously introduced by Calderón and Zygmund in \cite{calderon61} and \cite{calderon62}, in relation to the study of local properties of solutions of elliptic PDEs. A function $f$ is said to be $L^p$ differentiable at a point $x$ if it can be approximated in the $L^p$ norm by a linear mapping around $x$. This notion is the appropriate concept for studying fine differentiability properties of Sobolev functions and functions of bounded variation on $\R^n$ (see for instance the monographs \cite{evans}, \cite{ziemer}, as well as the original proofs in \cite{calderon61},\cite{calderon62}). In the context of Carnot groups, Calderón-Zygmund differentiability for real-valued maps is defined naturally in the following way.
\begin{definition}[$L^p$ differentiability]\label{Lpdiff}
  Let $1\leq p<+\infty$. We say that $f:\Omega\subset \G\to \R$ is $L^{p}$ differentiable at $x\in\Omega$ if there exists a $H$-linear map $L:\G\to\R$ such that
      \[\left(\fint_{B(x,r)}|f(y)-f(x)-L( x^{-1}y)|^{p} \,dy\right)^{1/p}=o(r).\]  
\end{definition}

If $f$ is P-differentiable at $x\in\Omega$ (see Definition \ref{pansudifferentiability}), then it is easy to see that $f$ is also $L^p$ differentiable at $x$ for every $1\leq p<+\infty.$ Similarly, it is clear that $L^q$ differentiability implies $L^p$ differentiability if $p\leq q$. Notice also that the previous definition does not depend on the specific homogeneous norm on $\G$. 
\begin{remark}
    The notion of $L^p$ differentiability is strictly related to the rate of convergence of integral averages on smaller and smaller balls. Let us denote by $f_{r,x}:=\fint_{B(x,r)} f(x)\,dx$ \vspace{-0.1 cm}the mean value of $f$ on $B(x,r)$. If $f\in W^{1,p}_\G(\Omega)$ and $x$ is a Lebesgue point of $\nabla_\G f$, then standard arguments give, by Poincaré inequality, that the following conditions are equivalent:
    \begin{itemize}
        \item $|f_{r,x} -f(x)|=o (r)$;
        \item $f$ is $L^p$ differentiable at $x$.
    \end{itemize}
\end{remark}
It is then natural to investigate fine differentiability properties for functions defined on Carnot groups. This has been achieved by Ambrosio and Magnani \cite{AM},\cite{magnani} in the real valued case (also proving higher order results) and later by Vodopyanov \cite{Vodop} and Kleiner, Müller, Xie \cite{KMX} for Sobolev maps between Carnot groups.\medskip\\ For every $1\leq p<Q$, we let $p^\ast$ to be the \textit{Sobolev conjugate} of $p$, which is defined by
\[p^\ast:=\frac{Qp}{Q-p}.\] Here we focus on the scalar case (see \cite[Theorem 2.4]{magnani}). \begin{theorem}\label{lp*differentiability2}
  Let $1\leq p<Q$ and let $f\in W^{1,p}_{\G,\emph  {loc}}(\Omega)$. Then $f$ is $L^{p^*}$ differentiable almost everywhere on $\Omega$. More precisely, for a.e. $x\in \Omega$, it holds
  \begin{equation}\label{lp*diff2}
      \left(\fint_{B(x,r)}|f(y)-f(x)-\langle \nabla_\G f(x), \pi_x( x^{-1}y)\rangle_x|^{p^*} \,dy\right)^{1/p^*}=o(r).
  \end{equation}
  \end{theorem}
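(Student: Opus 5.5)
The plan follows the classical Calderón--Zygmund scheme, transplanted to $\G$ via a Sobolev--Poincaré inequality on balls. Fix $x$ with three properties: $x$ is a Lebesgue point of $f$; $x$ is a $p$-Lebesgue point of $\nabla_\G f$ in the sense of Proposition \ref{lebesguepoint}; and
\[
\lim_{r\to0}\frac{1}{r}\Big|\fint_{B(x,r)}f\,dy-f(x)\Big|=0 .
\]
The first two properties hold a.e. by Proposition \ref{lebesguepoint}. For the third I will use the result recalled in the remark after Definition \ref{Lpdiff}. To make it self-contained, I would obtain it through a capacity/maximal-function argument: the set where $\limsup_r r^{-p}\fint_{B(x,r)}|\nabla_\G f-\nabla_\G f(x)|^p>0$ is null, and a telescoping sum over dyadic balls combined with Poincaré gives $|f_{r,x}-f(x)|=o(r)$ wherever the averages $f_{2^{-j}r,x}$ converge to $f(x)$ with controlled increments.

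Given such an $x$, set
\[
g_x(y):=f(y)-f(x)-\langle\nabla_\G f(x),\pi_x(x^{-1}y)\rangle_x .
\]
By Proposition \ref{Hlinear}, the map $y\mapsto\langle a,\pi_x(x^{-1}y)\rangle_x$ is the composition of an H-linear map with left translation. It is therefore smooth and $\G$-Lipschitz, its horizontal gradient is the constant section $a$ (by left invariance), and it is odd with respect to $x$, so its average over $B(x,r)$ vanishes. Hence
\[
g_x\in W^{1,p}_{\G}(B(x,r)),\qquad \nabla_\G g_x=\nabla_\G f-\nabla_\G f(x),
\]
and
\[
\fint_{B(x,r)}g_x\,dy=f_{r,x}-f(x)=o(r).
\]
The vanishing of the average is where the symmetry $N(h)=N(h^{-1})$ and the fact that the inverse is $-h$ in exponential coordinates enter.

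Next I apply the Sobolev--Poincaré inequality on balls (Lu, Franchi--Lu--Wheeden, Saloff-Coste, already invoked in Theorem \ref{convolution}). With constants uniform by dilation and translation invariance, it reads
\[
\Big(\fint_{B(x,r)}|g_x-(g_x)_{r,x}|^{p^*}\Big)^{1/p^*}\le C\,r\Big(\fint_{B(x,\lambda r)}|\nabla_\G f-\nabla_\G f(x)|^p\Big)^{1/p}.
\]
The right-hand side is $o(r)$ because $x$ is a $p$-Lebesgue point of $\nabla_\G f$. Combining this with $|(g_x)_{r,x}|=o(r)$ and Minkowski's inequality yields \eqref{lp*diff2}.

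The main obstacle is the third step, proving $|f_{r,x}-f(x)|=o(r)$ almost everywhere. Merely being a Lebesgue point does not give a rate, so a genuinely fine property of $W^{1,p}_\G$ is needed. My first attempt is the following decomposition. I split $\nabla_\G f=\nabla_\G f(x)+(\nabla_\G f-\nabla_\G f(x))$ and write
\[
f(x)-f_{r,x}=\sum_{j\ge0}\big(f_{2^{-j-1}r,x}-f_{2^{-j}r,x}\big).
\]
Each term is bounded via Poincaré by $C2^{-j}r\,\big(\fint_{B(x,2^{-j}r)}|\nabla_\G f-\nabla_\G f(x)|^p\big)^{1/p}$, after subtracting the affine part, whose averages are all zero. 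The sum is then $r$ times a Cesàro-type average of quantities tending to zero, hence $o(r)$. This telescoping works at every Lebesgue point $x$ of $f$ that is also a $p$-Lebesgue point of $\nabla_\G f$, since there the series converges to $f(x)$. The argument applies to the function $g_x$ directly and bypasses any capacity argument.
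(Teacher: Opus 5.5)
The paper does not prove this statement; it imports it from \cite[Theorem 2.4]{magnani}. Your final argument is the standard Calder\'on--Zygmund scheme (the Evans--Gariepy proof in $\R^n$), transplanted to $\G$, and it is correct.

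Fix $x$ which is a Lebesgue point of $f$ and a $p$-Lebesgue point of $\nabla_\G f$. Then $\nabla_\G g_x=\nabla_\G f-\nabla_\G f(x)$. Moreover $(g_x)_{s,x}=f_{s,x}-f(x)$, because the H-linear part has zero mean on $B(x,s)=x\cdot B(0,s)$ and $B(0,s)$ is invariant under $h\mapsto h^{-1}=-h$. The dyadic telescoping with the $(1,p)$-Poincar\'e inequality then gives
\[
|f_{r,x}-f(x)|\le C\,r\sup_{0<s\le r}\Big(\fint_{B(x,\lambda s)}|\nabla_\G f-\nabla_\G f(x)|^p\Big)^{1/p}=o(r).
\]
The Sobolev--Poincar\'e inequality applied to $g_x$, transferred from CC balls to $N$-balls through the enlargement factor $\lambda$, finishes the proof. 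So the good set is simply the full-measure set of common Lebesgue points.

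You should delete the preliminary sketch in your first paragraph, for two reasons.

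\begin{itemize}
\item The claim that $\limsup_{r\to 0} r^{-p}\fint_{B(x,r)}|\nabla_\G f-\nabla_\G f(x)|^p=0$ off a null set is false. For $f(y)=y_1^2$ one has $\nabla_\G f(y)-\nabla_\G f(x)=(2(y_1-x_1),0,\dots,0)$. That quantity therefore equals the positive constant $2^p\fint_{B(0,1)}|z_1|^p\,dz$ at every point.
\item Appealing to the remark after Definition \ref{Lpdiff} would be circular. That remark only asserts an equivalence with $L^p$-differentiability at Lebesgue points of $\nabla_\G f$; it does not say that either property holds almost everywhere.
\end{itemize}

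Neither ingredient is needed, since your telescoping step already supplies the $o(r)$ decay of $f_{r,x}-f(x)$.
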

 \begin{remark}
      If $f\in W^{1,Q}_{\G, \text{loc}}(\Omega)$, we can still apply this result, inferring that $f$ is $L^p$ \vspace{-0.08cm}differentiable a.e. on $\Omega$ for every $1\leq p<+\infty$. Moreover, if $f\in W^{1,p}_{\G, \rm loc}(\Omega)$ for some $p>Q$, then it is well known that $f$ is actually P-differentiable almost everywhere on $\Omega$, as can be inferred from a Morrey-type inequality \cite{montiserra}. In particular $f\in W^{1,p}_{\G,\rm loc}(\Omega)$ is $L^p$ differentiable a.e. for any $p\geq 1$.
 \end{remark}
  We also recall the corresponding Calderón-Zygmund differentiability result for functions of bounded horizontal variation (see \cite[Theorem 2.2]{AM} and \cite[Theorem 2.3]{magnani}).
\begin{theorem}\label{l1differentiabilityBV2}
Let $f\in BV_{\G,\emph  {loc}}(\Omega)$. Then $f$ is $L^{1^*}$ differentiable (hence also $L^1$ differentiable) almost everywhere on $\Omega$. More precisely, for a.e. $x\in \Omega$, it holds
  \begin{equation}\label{l1*diff2}
      \left(\fint_{B(x,r)}|f(y)-f(x)-\langle \nabla^{\rm ac}_\G f(x), \pi_x( x^{-1}y)\rangle_x|^\frac{Q}{Q-1} \,dy\right)^\frac{Q-1}{Q}=o(r).
  \end{equation}
  \end{theorem}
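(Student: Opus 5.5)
The plan is to follow the Euclidean proof of Ambrosio--Fusco--Pallara (and Evans--Gariepy), replacing Euclidean ingredients with their Carnot counterparts: Lebesgue--Besicovitch differentiation on the doubling space $(\G,d,\mathcal L^n)$, and the Sobolev--Poincaré inequality on balls with exponent $1^*=Q/(Q-1)$. Fix $x\in\Omega$ and $r>0$, and set
\[
g_x(y):=f(y)-f(x)-\langle \nabla^{\rm ac}_\G f(x),\pi_x(x^{-1}y)\rangle_x .
\]
The map $y\mapsto L_x(x^{-1}y)=\langle a,\pi_x(x^{-1}y)\rangle_x$ is H-linear composed with a left translation. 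Hence it lies in $C^\infty$, and by left-invariance and Proposition \ref{Hlinear} its horizontal gradient is constantly equal to $a=\nabla^{\rm ac}_\G f(x)$. It follows that $g_x\in BV_{\G,\rm loc}$ with
\[
D_\G g_x=D_\G f-\nabla^{\rm ac}_\G f(x)\,\mathcal L^n .
\]

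The key step is the relative isoperimetric / Sobolev--Poincaré inequality for $BV_\G$ on balls, valid with a constant $\lambda\ge1$ because $\G$ supports a $1$-Poincaré inequality (the references already cited for Poincaré in the proof of Theorem \ref{convolution}):
\[
\Big(\fint_{B(x,r)}|g-g_{r,x}|^{1^*}\Big)^{1/1^*}\le C\,r\,\frac{|D_\G g|(B(x,\lambda r))}{|B(x,\lambda r)|}.
\]
Apply this to $g=g_x$. The right-hand side is at most
\[
C\,r\Big(\fint_{B(x,\lambda r)}|\nabla^{\rm ac}_\G f-\nabla^{\rm ac}_\G f(x)|\,dy+\frac{|D^s_\G f|(B(x,\lambda r))}{r^Q}\Big).
\]
For a.e. $x$ both terms tend to $0$. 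The first vanishes because $x$ is a Lebesgue point of $\nabla^{\rm ac}_\G f$ (Proposition \ref{lebesguepoint}). The second vanishes by the Besicovitch-type differentiation of a singular measure with respect to Lebesgue measure on Carnot groups (\cite[Lemma 2.1]{magnani}). Together these give $\|g_x-(g_x)_{r,x}\|_{L^{1^*}(B(x,r))\,\text{avg}}=o(r)$.

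It remains to show that $|(g_x)_{r,x}|=o(r)$, that is, $|f_{r,x}-f(x)|=o(r)$, using that the linear term has zero mean on $B(x,r)$ when $N$ is symmetric ($N(z^{-1})=N(z)$ and $\pi_x$ is odd in $z$). For this I would use the standard telescoping argument over $r_j=2^{-j}r$. The Poincaré estimate above gives
\[
|(g_x)_{r_{j+1},x}-(g_x)_{r_j,x}|\le C\,r_j\,\omega(r_j),
\]
where $\omega(s)\to0$ as $s\to0$. Summing the geometric series yields $|(g_x)_{r,x}-\tilde g_x(x)|\le C\,r\sup_{s\le r}\omega(s)=o(r)$. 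Here $\tilde g_x(x)=\lim_j (g_x)_{r_j,x}=0$, since $x$ is a Lebesgue point of $f$, which holds a.e. Intermediate radii are handled by doubling.

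Combining the two estimates with Minkowski's inequality gives \eqref{l1*diff2}; $L^1$ differentiability then follows from Hölder. The main obstacle is the Poincaré step: one must justify the $BV$ version of the Sobolev--Poincaré inequality with the $1^*$ exponent (via Theorem \ref{anzellotti}, strict approximation, and Fatou), and check that the enlargement constant $\lambda$ is harmless, which it is because all bounds are stated on the balls $B(x,\lambda r)$ and doubling applies.
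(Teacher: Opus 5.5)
The paper gives no proof of this statement; it quotes it from \cite[Theorem 2.2]{AM} and \cite[Theorem 2.3]{magnani}. Your argument is correct and is essentially the standard Calderón--Zygmund proof behind those references: apply the $BV_\G$ Sobolev--Poincaré inequality with exponent $1^*$ to $f$ minus its horizontal linear part, use that a.e.\ $x$ is a Lebesgue point of $\nabla^{\rm ac}_\G f$ and a point where $|D^s_\G f|$ has zero density (\cite[Lemma 2.1]{magnani}), and control the means by dyadic telescoping.

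Two cosmetic remarks. The symmetry $N(z^{-1})=N(z)$ is part of the definition of a homogeneous norm, so the zero-mean property of the linear term always holds; in any case it is not needed, since that mean is $O(r)$. Also, no separate doubling argument is needed for intermediate radii, because your telescoping sequence $r_j=2^{-j}r$ already starts from an arbitrary $r$.
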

It is clear that $L^p$ differentiability does not characterize Sobolev functions. However, if we consider the corresponding \textit{global} condition, this is actually true. As anticipated, the result is due to Spector \cite{spector}, \cite{Spector16} by means of the use of Taylor approximations. 
The definition in the setting of Carnot groups is a natural extension of the Euclidean one. We give here a more flexible definition, which is formulated in terms of general mollifiers (see \cite{brezis-nguyen}, \cite{Ponce}).
\begin{definition}[$L^p$-Taylor approximation]
Let $(\rho_\eps)_\eps$ be a family of mollifiers on $\G$ satisfying assumptions $\eqref{P1}\div\eqref{P4}$. A function $f\in L^p(\G)$ is said to have a first order $L^p$-Taylor approximation with respect to the family $(\rho_\eps)_\eps$ if there exists a section $v\in L^p(\G, H\G)$ such that 
\[\lim_{\eps\to 0}\int_\G\int_{\G}\frac{|f(x\cdot h)-f(x)-\langle v(x), \pi_x(h)\rangle_x|^p}{N(h)^p}\rho_\eps(h)\,dh\,dx=0.\]
\end{definition}
\noindent Notice that, taking $\rho_\eps(h):=\chi_{B(0,\eps)}(h)/|B(0,\eps)|$ (Example \ref{Examples}), the pointwise condition 
\[\lim_{\eps\to 0}\fint_{B(0,\,\eps)}\frac{|f(x\cdot h)-f(x)-\langle v(x), \pi_x(h)\rangle_x|^p}{N(h)^p}\,dh=0\]
is equivalent, by \cite[Lemma 1.16]{magnani}, to the $L^p$ differentiability of $f$ at $x$. Hence, the $L^p$-Taylor approximation can be interpreted as the corresponding global condition and, in fact, it implies $L^p$ differentiability almost everywhere (see Remark \ref{approxtodiff}).\medskip\\
Before stating the main results, we recall this classical lemma, which encodes the fact that group translations are continuous in $L^p(\G)$.
\begin{lemma}\label{translations}
    Let $g\in L^p(\G)$. Then 
    \[\lim_{w\to 0}\int_\G|g(x\cdot w)-g(x)|^p\,dx=0.\]
\end{lemma}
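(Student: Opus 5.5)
The plan is the standard density argument: first prove the claim for a dense class of nice functions, then extend to all of $L^p(\G)$ by an $\varepsilon/3$ estimate. The two facts that make this work are the density of $C_c(\G)$ in $L^p(\G)$ (for $1\le p<\infty$) and the invariance of the Haar measure $\mathcal L^n$ under right translations. The latter gives, for every $w\in\G$ and every $u\in L^p(\G)$,
\[
\int_\G |u(x\cdot w)|^p\,dx=\int_\G |u(x)|^p\,dx .
\]

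\textbf{Step 1 (continuous compactly supported functions).} Let $\varphi\in C_c(\G)$ with $\mathrm{spt}(\varphi)\subset B(R)$. For $w\in B(1)$ we have $\varphi(x\cdot w)\neq 0$ only if $x\in B(R)\cdot w^{-1}$, which is contained in $B(R+1)$ by the triangle inequality for $N$. Hence the integrand $|\varphi(x\cdot w)-\varphi(x)|^p$ vanishes outside the fixed compact set $\overline{B(R+1)}$. Since $\varphi$ is uniformly continuous and $d_N(x\cdot w,x)=N(x^{-1}x w)=N(w)$, the integrand is bounded by
\[
\sup_{x}|\varphi(xw)-\varphi(x)|^p\longrightarrow 0\quad\text{as } w\to 0.
\]
Integrating over $B(R+1)$ then gives the conclusion for $\varphi$.

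\textbf{Step 2 (general $g\in L^p(\G)$).} Given $\eta>0$, choose $\varphi\in C_c(\G)$ with $\|g-\varphi\|_{L^p}<\eta$. Minkowski's inequality splits the quantity into three terms:
\[
\|g(\cdot\, w)-g\|_{L^p}\le\|g(\cdot\, w)-\varphi(\cdot\, w)\|_{L^p}+\|\varphi(\cdot\, w)-\varphi\|_{L^p}+\|\varphi-g\|_{L^p}.
\]
By right invariance of the Haar measure, the first term equals $\|g-\varphi\|_{L^p}<\eta$. The third term is also $<\eta$, and the middle term tends to $0$ as $w\to0$ by Step 1. Therefore
\[
\limsup_{w\to0}\|g(\cdot\, w)-g\|_{L^p}\le 2\eta,
\]
and since $\eta>0$ is arbitrary, the limit is $0$.

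The only point that requires any care is right versus left invariance. Translation by $w$ on the right is measure preserving because $\mathcal L^n$ is bi-invariant, as recalled in Section 2. The uniform continuity of $\varphi$ must also be taken with respect to $d_N$, where the relevant distance is $d_N(xw,x)=N(w)$. Apart from these two observations, the argument is routine.
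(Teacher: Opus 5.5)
Your proof is correct and takes essentially the same approach as the paper: establish the claim for a dense class of compactly supported continuous functions, then extend it to all of $L^p(\G)$ by approximation, using the right-invariance of the Haar measure. The only cosmetic difference is the dense class: the paper uses $C^\infty_c(\G)\subset \mathrm{Lip}_{\G,c}(\G)$, which directly gives $|g(x\cdot w)-g(x)|\le L\,N(w)$, whereas you use uniform continuity of $\varphi\in C_c(\G)$ with respect to $d_N$.
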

\begin{proof}
\noindent The proof is no more difficult than the Euclidean counterpart (see \cite[Proposition 8.5]{folland99}): one can first show the conclusion for $g\in C^\infty_c(\G)\subset \text{Lip}_{\G,c}(\G)$ and then argue by a standard approximation argument.
\end{proof}

As in the Euclidean case, we shall prove that $L^p$-Taylor approximations characterize the Sobolev space $W^{1,p}_\G(\G)$ (Theorem \ref{Taylormollgen}). However, we start our analysis by considering the case of $f\in BV_\G(\G)$: differently from the positive Calderón-Zygmund differentiability result (Theorem \ref{l1differentiabilityBV2}), a $L^1$-Taylor approximation may fail to hold for functions of bounded horizontal variation. Nevertheless, we can still 
 prove the following upper bound in terms of the total mass of the singular part of the distributional gradient. 
\begin{theorem}\label{TaylorBV}
 Let $(\rho_\eps)_\eps$ be a family of mollifiers satisfying $\eqref{P1}\div\eqref{P4}$. Let $f\in BV_\G(\G)$. Then 
  \begin{equation}\label{taylorapproximationbv}
  \limsup_{\eps\to 0}\int_\G\int_{\G}\frac{|f(x\cdot h)-f(x)-\langle\nabla^{\rm ac}_\G f(x), \pi_x(h)\rangle_x|}{N(h)}\rho_\eps(h)\,dh\,dx\leq C |D_\G^sf|(\G),
  \end{equation}
  where $C$ is an absolute constant depending only on the norm $N$.
\end{theorem}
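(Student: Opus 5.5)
The plan follows Ponce--Spector \cite[Theorem 1.4]{Ponce}: split $f$ into a smooth-gradient part and a remainder carrying the singular part, and estimate the remainder with the BBM bound of Lemma \ref{thirdestimate}.

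\textbf{Decomposition.} Write $g:=\nabla^{\rm ac}_\G f\in L^1(\G,H\G)$. Fix $\eta>0$ and choose $\phi\in C^\infty_c(\G)$ with $\|\nabla_\G\phi-g\|_{L^1}<\eta$ and, additionally, $|D_\G(f-\phi)|(\G)\le |D^s_\G f|(\G)+\eta$. For the second requirement, approximate the finite measure $g\,\mathcal L^n$ by the horizontal gradient of a smooth compactly supported function. This can be done via intrinsic mollification of $f$ restricted to a region where the singular part has small mass; a cleaner route is to use that $W^{1,1}_\G$ functions are dense. Concretely, I would apply Theorem \ref{anzellotti}-type arguments or a Carnot version of Alberti's rank-one type lemma: given $g\in L^1(\G,H\G)$, find $u\in W^{1,1}_\G(\G)$ with $\nabla_\G u=g$ up to $\eta$ in $L^1$. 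Then set $\phi$ to be a smooth approximation of $u$ in $W^{1,1}_\G$. This is the step I expect to be the main obstacle. The fallback is to avoid needing $\nabla_\G\phi\approx g$ exactly: work instead with $\phi=\eta_\delta\ast f$ and pay the price $|D_\G(f-\eta_\delta\ast f)|(\G)$, which does not go to zero.

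So the preferred concrete route is as follows. Take $u\in W^{1,1}_\G$ with $\|\nabla_\G u-g\|_{L^1}\le\eta$. This is available since $\{\nabla_\G u\}$ is dense in $L^1(\G,H\G)$ by duality: a $\psi\in L^\infty$ annihilating all $\nabla_\G u$ has ${\rm div}_\G\psi=0$, and one needs to show that this forces $\psi=0$ on the relevant closure. If that fails, I would prove an Alberti-type lemma on $\G$ directly via local constructions. Then put $w:=f-u\in BV_\G$, with $|D_\G w|(\G)\le |D^s_\G f|(\G)+\eta$.

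\textbf{Estimates.} By the triangle inequality, the integrand is bounded by three terms:
\[
\frac{|w(xh)-w(x)|}{N(h)}+\frac{|u(xh)-u(x)-\langle\nabla_\G u(x),\pi_x(h)\rangle_x|}{N(h)}+\frac{|\langle\nabla_\G u(x)-g(x),\pi_x(h)\rangle_x|}{N(h)} .
\]
The first term integrates to $I^*_{\eps,1}(w)\le C(N)\,(|D^s_\G f|(\G)+\eta)$ by Lemma \ref{thirdestimate}(ii). The third term is bounded by $C\|\nabla_\G u-g\|_{L^1}\le C\eta$, since $|\pi_x(h)|\le C N(h)$ (horizontal coordinates are controlled by any homogeneous norm).

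For the middle term, $u\in W^{1,1}_\G$. First take $u\in C^\infty_c$: by a Taylor expansion along the group, the integrand is at most $C\,\omega(N(h))$ on bounded $h$, and at most $C(|u(xh)|+|u(x)|)/N(h)+C|\nabla_\G u|$ for large $h$. Using \eqref{P3}--\eqref{P4} and Lemma \ref{translations}, its integral tends to $0$ as $\eps\to0$. For general $u\in W^{1,1}_\G$, approximate by $C^\infty_c$ functions (Theorem \ref{meyersserrin} plus cutoff). The error is controlled again by Lemma \ref{thirdestimate}(i) and the $\pi_x$ bound, both linear in $\|\nabla_\G(u-u_k)\|_{L^1}$.

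\textbf{Conclusion.} Taking $\limsup_{\eps\to0}$ and then $\eta\to0$ gives \eqref{taylorapproximationbv} with $C=C(N)$.
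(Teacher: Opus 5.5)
\textbf{There is a genuine gap in your decomposition step.} Because $D^s_\G f$ is singular, $D_\G(f-u)=(\nabla^{\rm ac}_\G f-\nabla_\G u)\,\mathcal L^n+D^s_\G f$ splits into mutually singular pieces, so $|D_\G(f-u)|(\G)=\|\nabla^{\rm ac}_\G f-\nabla_\G u\|_{L^1(\G)}+|D^s_\G f|(\G)$. Your two requirements are therefore the same requirement: $\nabla^{\rm ac}_\G f$ must lie in the $L^1$-closure of $\{\nabla_\G u:\ u\in W^{1,1}_\G(\G)\}$. This is false in general.

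The duality argument fails immediately. Every constant section $\psi\equiv e_i$ has ${\rm div}_\G\psi=0$ and annihilates all horizontal gradients, since $\int_\G X_iu\,dx=0$ for $u\in W^{1,1}_\G(\G)$. To see this, test the weak derivative against $\zeta(\delta_{1/R}x)$ with $\zeta\in C^\infty_c(\G)$, $\zeta=1$ near $0$; its $X_i$-derivative is $O(1/R)$.

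For an explicit obstruction, let $E$ be a bounded open set with smooth boundary and $f(x)=x_1\chi_E(x)$. Then $D_\G f=\chi_E\,e_1\,\mathcal L^n+x_1D_\G\chi_E$, so $\nabla^{\rm ac}_\G f=\chi_E e_1$. For every $u\in W^{1,1}_\G(\G)$,
\[
\|\nabla_\G u-\nabla^{\rm ac}_\G f\|_{L^1(\G)}\ \ge\ \Big|\int_\G (X_1u-\chi_E)\,dx\Big|=|E|.
\]
Dropping the condition $u\in L^1$ does not help: already in $\R^n$ with $n\ge 2$, $L^1$-limits of gradients are curl-free, and $\chi_E e_1$ is not. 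An Alberti-type Lusin lemma does not rescue this either. It gives $\nabla u=g$ off a set of small measure, but only with $\|\nabla u\|_{L^1}\le C\|g\|_{L^1}$, not with a small $L^1$ error.

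\textbf{The repair is the fallback you discarded.} The statement only asks for a bound $C\,|D^s_\G f|(\G)$, so the price need not vanish; it only has to be controlled by the singular part. Take $u=f_\delta:=\eta_\delta\ast f\in C^\infty\cap W^{1,1}_\G(\G)$ and $g_\delta:=\eta_\delta\ast\nabla^{\rm ac}_\G f$. By Lemma \ref{propconv2}, $\nabla_\G u=g_\delta+(D^s_\G f)_\delta$ with $\|(D^s_\G f)_\delta\|_{L^1}\le|D^s_\G f|(\G)$. Hence
\[
\|\nabla_\G u-\nabla^{\rm ac}_\G f\|_{L^1}\le\|g_\delta-\nabla^{\rm ac}_\G f\|_{L^1}+|D^s_\G f|(\G),\qquad |D_\G(f-u)|(\G)\le\|g_\delta-\nabla^{\rm ac}_\G f\|_{L^1}+2|D^s_\G f|(\G).
\]
Your three-term estimate then applies: Lemma \ref{thirdestimate}(ii) for $w=f-u$, your $W^{1,1}_\G$ Taylor argument for $u$, and $|\pi_x(h)|_x\le CN(h)$. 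It gives $\limsup_{\eps\to0}\le C'\big(|D^s_\G f|(\G)+\|g_\delta-\nabla^{\rm ac}_\G f\|_{L^1}\big)$, and letting $\delta\to0$ concludes.

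Repaired this way, your argument is a valid modular alternative: a BBM bound for the remainder plus the Sobolev Taylor theorem, the latter proved independently by $C^\infty_c$-density. The paper never splits $f$. It applies the geodesic estimate \eqref{ftc} directly to $f_\delta-\langle g_\delta(x),\pi_x(x^{-1}\cdot)\rangle_x$ and splits $\nabla_\G f_\delta=g_\delta+(D^s_\G f)_\delta$ under the integral. It then lets $\delta\to0$ for fixed $h$ and controls the absolutely continuous part by continuity of translations (Lemma \ref{translations}) together with \eqref{P4}. Theorem \ref{Taylormollgen} then follows from the analogous argument with $D^s_\G f=0$.
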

\begin{proof}
The proof follows \cite[Theorem 1.4]{Ponce}. Let $(\eta_\delta)_\delta$ be a family of standard mollifiers on $\G$. For any function $u$ on $\G$, we let $u_\delta:=\eta_\delta\ast u$ and, similarly, if $\mu$ is a Radon measure on $\G$, we write $\mu_\delta:=\eta_\delta\ast \mu$.
For any $x\in\G$ and $\delta>0$, define $f_{x,\delta}\in C^1_\G(\G)$ to be the map given by
\[f_{x,\delta}(y):=f_\delta(y)-\langle(\nabla^{\rm ac}_\G f)_\delta(x),\pi_x(x^{-1}y)\rangle_x.\]
For every $h\in\G$, let $\gamma_h:[0,1]\to\G$ be a constant speed geodesic (with respect to the Carnot-Carathéodory distance) connecting 0 and $h$, as in the proof of Lemma \ref{thirdestimate}. By \eqref{ftc}, we can write, for every $x\in\G, h\in\G$, $\delta>0$, 
\[|f_{x,\delta}(x\cdot h)-f_{x,\delta}(x)|\lesssim N(h)\int_0^1|\nabla_\G f_{x,\delta}(x\cdot \gamma_h(t))|\,dt.\]
Exploiting the definition of $f_{x,\delta}$, we get
\[|f_\delta(x\cdot h)-f_\delta(x)-\langle(\nabla^{\rm ac}_\G f)_\delta(x),\pi_x(h)\rangle_x|\lesssim N(h)\int_0^1|\nabla_\G f_\delta(x\cdot \gamma_h(t))-(\nabla^{\rm ac}_\G f)_\delta(x)|\,dt\]
for every $x,h\in\G$, $\delta>0$. Recalling that, by Lemma \ref{propconv2} (iv), \[\nabla_\G f_\delta=(D_\G f)_\delta=(D^s_\G f)_\delta+(\nabla^{\rm ac}_\G f)_\delta,\] an integration on $\G$ yields
\begin{equation}\label{approximationestimates}\begin{aligned}\int_\G\frac{|f_\delta(x\cdot h)-f_\delta(x)-\langle(\nabla^{\rm ac}_\G f)_\delta(x),\pi_x(h)\rangle_x|}{N(h)}dx\lesssim &\int_0^1\int_\G|(\nabla^{\rm ac}_\G f)_\delta(x\cdot \gamma_h(t))-(\nabla^{\rm ac}_\G f)_\delta(x)|dx\,dt\\ &+\int_0^1\int_\G|(D^s_\G f)_\delta(x\cdot \gamma_h(t))|dx\,dt\end{aligned}\end{equation}
for any $\delta>0$ and for any $h\in\G$. Now notice that, by the right-invariance of the Lebesgue measure and Lemma \ref{propconv2} (i),
\[\int_0^1\int_\G|(D^s_\G f)_\delta(x\cdot \gamma_h(t))|dx\,dt=\int_\G|(D^s_\G f)_\delta(y)|dy\leq |D^s_\G f|(\G)
.\]
Hence, passing to the limit as $\delta\to 0$ in \eqref{approximationestimates}, by Proposition \ref{young}, Lemma \ref{propconv1} and Lebesgue's dominated convergence theorem, we get
\begin{small}\begin{equation*}\int_\G\frac{|f(x\cdot h)-f(x)-\langle\nabla^{\rm ac}_\G f(x),\pi_x(h)\rangle_x|}{N(h)}dx\lesssim \int_0^1\int_\G|\nabla^{\rm ac}_\G f(x\cdot \gamma_h(t))-\nabla^{\rm ac}_\G f(x)|dx\,dt+|D^s_\G f|(\G)\end{equation*}\end{small}
\hspace{-0.12 cm}for any $h\in\G$. 
\\
If $\nabla^{\rm ac}_\G f(x)=\,0$ a.e. $x\in\G$, we are done by the previous estimates. Thus we can assume that $\|\nabla_\G^{\rm ac} f\|_{L^1(\G)}>0$. 
Let $\eta>0$ be fixed. From Lemma \ref{translations} there exists $\sigma>0$ such that
\[\int_\G|\nabla^{\rm ac}_\G f(x\cdot w)-\nabla^{\rm ac}_\G f(x)| dx\leq \frac{\eta}{2}\] for every $w\in B(\sigma)$. By equivalence of the norms $\|\cdot\|_c$ and $N$, let $\beta>0$ be such that $\gamma_h(t)\in B(\sigma)$ for every $h\in B(\beta), \,t\in[0,1]$. It follows that 
\[\int_0^1\int_\G|\nabla^{\rm ac}_\G f(x\cdot \gamma_h(t))-\nabla^{\rm ac}_\G f(x)|dx\,dt\leq \frac{\eta}{2}\quad\text{for every }h\in B(\beta).\]
Note also that, for every $h\in\G$ and any $t\in[0,1]$, 
\[\begin{split}\int_\G|\nabla^{\rm ac}_\G f(x\cdot \gamma_h(t))-\nabla^{\rm ac}_\G f(x)|dx&\leq\int_\G|\nabla^{\rm ac}_\G f(x\cdot \gamma_h(t))|dx\,+\int_\G|\nabla^{\rm ac}_\G f(x)|dx\\ &\leq \int_\G|\nabla^{\rm ac}_\G f(y)|dy\,+\int_\G|\nabla^{\rm ac}_\G f(x)|dx\\&=2\|\nabla^{\rm ac}_\G f\|_{L^1(\G)}.\end{split}\]
By property \eqref{P4}, we find $\bar\varepsilon>0$ such that, for every $\eps<\bar\varepsilon$ \[\int_{\G\setminus B(\beta)}\rho_\eps(h)\,dh\leq \frac{\eta}{4\|\nabla^{\rm ac}_\G f\|_{L^1(\G)}}.\]
Combining the previous estimates together, we get
\[\begin{split}&\int_\G\int_\G\frac{|f(x\cdot h)-f(x)-\langle\nabla_\G f(x),\pi_x(h)\rangle_x|}{N(h)}dx\,\rho_\eps(h)dh\\&=\int_{B(\beta)}\int_\G\frac{|f(x\cdot h)-f(x)-\langle\nabla_\G f(x),\pi_x(h)\rangle_x|}{N(h)}dx\,\rho_\eps(h)dh\\ &\phantom{aa}+\int_{\G\setminus B(\beta)}\int_\G\frac{|f(x\cdot h)-f(x)-\langle\nabla_\G f(x),\pi_x(h)\rangle_x|}{N(h)}dx\,\rho_\eps(h)dh\\ &\lesssim \int_{B(\beta)}\left[\frac{\eta}{2}+|D^s_\G f|(\G)\right]\rho_\eps(h)\,dh+\int_{\G\setminus B(\beta)}\left[2\|\nabla^{\rm ac}_\G f\|_{L^1(\G)}+|D^s_\G f|(\G)\right]\rho_\eps(h)\,dh\\ &\leq |D^s_\G f|(\G)+\frac{\eta}{2}+2\|\nabla^{\rm ac}_\G f\|_{L^1(\G)}\frac{\eta}{4\|\nabla^{\rm ac}_\G f\|_{L^1(\G)}}=|D^s_\G f|(\G)+\eta.\end{split}\]
for every $\eps<\bar\varepsilon$. This gives the desired estimate, concluding the proof.
\end{proof}
 Using a similar argument, we can now prove that horizontal Sobolev functions admit a first order Taylor approximation (see also \cite[Proposition 1]{brezis-nguyen} for an alternative approach).
\begin{theorem}\label{Taylormollgen}Let $(\rho_\eps)_\eps$ be a family of mollifiers satisfying $\eqref{P1}\div\eqref{P4}$.
  Let $1\leq p<+\infty$ and let $f\in W^{1,p}_\G(\G)$. Then $f$ admits a first-order $L^p$-Taylor approximation with respect to the family $(\rho_\eps)_\eps$. More precisely
  \begin{equation}\label{taylorapproximationmoll}
  \lim_{\eps\to 0}\int_\G\int_\G\frac{|f(x\cdot h)-f(x)-\langle\nabla_\G f(x), \pi_x(h)\rangle_x|^p}{N(h)^p}\rho_\eps(h)\,dh\,dx=0.
  \end{equation}
\end{theorem}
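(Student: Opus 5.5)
We mimic the proof of Theorem \ref{TaylorBV}, but with the $p$-th power and no singular part. The plan is to first prove a pointwise-in-$h$ estimate for smooth functions, then pass to $f$ by density, and finally split the $h$-integral into small and large $|h|$ using Lemma \ref{translations} and \eqref{P4}.

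\textbf{Step 1 (smooth functions).} By Theorem \ref{meyersserrin}, pick $f_j\in C^\infty(\G)\cap W^{1,p}_\G(\G)$ with $f_j\to f$ in $W^{1,p}_\G(\G)$. For $x\in\G$, apply \eqref{ftc} to the $C^1_\G$ function
\[
y\mapsto f_j(y)-\langle\nabla_\G f_j(x),\pi_x(x^{-1}y)\rangle_x,
\]
whose horizontal gradient at $y$ is $\nabla_\G f_j(y)-\nabla_\G f_j(x)$, since the H-linear part has constant horizontal gradient. This gives
\[
|f_j(x\cdot h)-f_j(x)-\langle\nabla_\G f_j(x),\pi_x(h)\rangle_x|\lesssim N(h)\int_0^1|\nabla_\G f_j(x\cdot\gamma_h(t))-\nabla_\G f_j(x)|\,dt .
\]
Raising to the power $p$, applying Jensen in $t$ and integrating in $x$ yields
\[
\int_\G\frac{|f_j(x\cdot h)-f_j(x)-\langle\nabla_\G f_j(x),\pi_x(h)\rangle_x|^p}{N(h)^p}\,dx\lesssim\int_0^1\int_\G|\nabla_\G f_j(x\cdot\gamma_h(t))-\nabla_\G f_j(x)|^p\,dx\,dt .
\]

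\textbf{Step 2 (passage to $f$).} Let $j\to\infty$ for each fixed $h$. Along a subsequence $f_j\to f$ a.e. and $\nabla_\G f_j\to\nabla_\G f$ a.e., so the left-hand side is lower semicontinuous by Fatou. On the right-hand side, translations preserve $L^p$ norms, so the integrand converges in $L^p$ uniformly in $t$. We obtain for every $h$
\[
F(h):=\int_\G\frac{|f(x\cdot h)-f(x)-\langle\nabla_\G f(x),\pi_x(h)\rangle_x|^p}{N(h)^p}\,dx\lesssim\int_0^1\int_\G|\nabla_\G f(x\cdot\gamma_h(t))-\nabla_\G f(x)|^p\,dx\,dt=:G(h).
\]
One could also run the intrinsic mollification $f_\delta=\eta_\delta\ast f$ as in Theorem \ref{TaylorBV}, using Lemma \ref{propconv1}(iii).

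\textbf{Step 3 (splitting in $h$).} $G$ has two properties. First, $G(h)\le 2^p\|\nabla_\G f\|^p_{L^p(\G)}$ for all $h$, by right-invariance of Lebesgue measure. Second, $G(h)\to0$ as $h\to0$. This follows from Lemma \ref{translations} applied to each component of $\nabla_\G f$, together with the fact that $\gamma_h(t)\in B(\sigma)$ for all $t$ when $N(h)<\beta$, which holds since $\|\gamma_h(t)\|_c\le\|h\|_c$ and $N$, $\|\cdot\|_c$ are equivalent. Given $\eta>0$, choose $\beta$ with $G<\eta$ on $B(\beta)$. Then
\[
\int_\G F\rho_\eps\lesssim\eta+2^p\|\nabla_\G f\|_{L^p(\G)}^p\int_{\G\setminus B(\beta)}\rho_\eps ,
\]
and \eqref{P4} lets $\eps\to0$, after which $\eta\to0$. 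This gives \eqref{taylorapproximationmoll} by Fubini–Tonelli.

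\textbf{Main obstacle.} The main difficulty is Step 2. Joint measurability in $(x,h)$ is needed so that Fubini–Tonelli applies in Step 3. Pointwise a.e. convergence of $f_j(x\cdot h)$ for each fixed $h$ holds because right translation preserves null sets. The bound $G(h)$ is independent of $j$ once the limit is taken, which is what makes the splitting argument uniform in $h$.
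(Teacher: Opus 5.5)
Your proposal is correct and follows essentially the paper's route. The paper's proof simply points back to Theorem \ref{TaylorBV}, with $D^s_\G f=0$ and Jensen's inequality for the exponent $p$: the geodesic estimate \eqref{ftc} is applied to the function minus its horizontal linear part, one passes to the limit in an approximation, and the $h$-integral is split using Lemma \ref{translations} and \eqref{P4}. The only cosmetic difference is that you approximate by Meyers--Serrin functions and use Fatou on the left, whereas the paper uses intrinsic mollification $f_\delta=\eta_\delta\ast f$ with the mollified gradient as the linear part; in either version the left-hand side in fact converges for each fixed $h$.
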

\begin{proof} 
The proof can be carried out as the one of Theorem \ref{TaylorBV}, so we omit the details. Notice that in this case $|D^s_\G f|(\G)=0$, which leads to the expected result. One has to take into account the exponent $p$, but this can be handled simply using Jensen's inequality.
\end{proof}
\begin{remark}\label{rmkbarb2} 
Note that \eqref{taylorapproximationmoll} can be seen as the first-order refinement of Barbieri's result \eqref{barbieribbm} and, by contrast, it applies to \textit{any} homogeneous norm on $\G$. Moreover, if $N$ is assumed to be invariant under horizontal rotations, then it is easy to see that \eqref{taylorapproximationmoll} actually implies \eqref{barbieribbm}, providing in this way an alternative proof of such a result (see Corollary \ref{barbiericorollary}). 
\end{remark}

We now state the converse implication of Theorem \ref{Taylormollgen}, showing that, for $1<p<+\infty$, the $L^p$-Taylor approximation characterizes the horizontal Sobolev space $W^{1,p}_\G(\G)$. 
 Notice that this approach does not cover the limit case $p=1$, which will be treated separately using the approximation results of Section \ref{section4}. 
\begin{theorem}\label{converse}
    Let $1<p<+\infty$ and let $f\in L^p(\G)$. Let $(\rho_\eps)_\eps$ be a sequence of mollifiers satisfying assumptions \eqref{P1}$\div$\eqref{P4}. Suppose $f$ admits a 1st order $L^p$-Taylor approximation with respect to the family $(\rho_\eps)_\eps$, that is there exists a section $v\in L^p(\G,H\G)$ such that
    \[\lim_{\eps\to 0}\int_\G\int_{\G}\frac{|f(x\cdot h)-f(x)-\langle v(x), \pi_x(h)\rangle_x|^p}{N(h)^p}\rho_\eps(h)\,dh\,dx=0.\]
    Then $f\in W^{1,p}_\G(\G)$ and in particular $v(x)=\nabla_\G f(x)$ for a.e. $x\in\G$. 
\end{theorem}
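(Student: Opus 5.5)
The plan is to reduce the statement to Theorem \ref{firstcharacterization}, specifically to \eqref{firstchari1bis}. It is enough to show that $\limsup_{\eps\to0}I^*_{\eps,p}(f)<+\infty$; this gives $f\in W^{1,p}_\G(\G)$. After that, a separate argument will identify $v$ with $\nabla_\G f$.

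\textbf{Step 1: bound on $I^*_{\eps,p}(f)$.} Using $|a+b|^p\le 2^{p-1}(|a|^p+|b|^p)$, we get
\[
I^*_{\eps,p}(f)\le 2^{p-1}\int_\G\int_\G\frac{|f(x\cdot h)-f(x)-\langle v(x),\pi_x(h)\rangle_x|^p}{N(h)^p}\rho_\eps(h)\,dh\,dx+2^{p-1}\int_\G\int_\G\frac{|\langle v(x),\pi_x(h)\rangle_x|^p}{N(h)^p}\rho_\eps(h)\,dh\,dx .
\]
The first term tends to $0$ by hypothesis. For the second term, note that $|\pi_x(h)|_x=|(h_1,\dots,h_{m_1})|$. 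By homogeneity and compactness of the unit sphere, this quantity is bounded by $C(N)\,N(h)$. The second term is therefore at most $C\|v\|^p_{L^p}\|\rho_\eps\|_{L^1}=C\|v\|_{L^p}^p$. Hence $\limsup_\eps I^*_{\eps,p}(f)<\infty$, and \eqref{firstchari1bis} yields $f\in W^{1,p}_\G(\G)$.

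\textbf{Step 2: identifying $v$.} Since $f\in W^{1,p}_\G(\G)$, Theorem \ref{Taylormollgen} applies and gives the Taylor approximation with $\nabla_\G f$. Set $w:=v-\nabla_\G f\in L^p(\G,H\G)$. Subtracting the two approximations and using the triangle inequality in $L^p$, we obtain
\[
\lim_{\eps\to0}\int_\G\int_\G\frac{|\langle w(x),\pi_x(h)\rangle_x|^p}{N(h)^p}\rho_\eps(h)\,dh\,dx=0 .
\]

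\textbf{Step 3: a lower bound that forces $w=0$.} This is the main obstacle: we need a uniform positive lower bound for the inner integral. By \eqref{P2} and the polar coordinates formula (Theorem \ref{polarcoordinates}), together with the fact that $\pi_x(\delta_r y)=r\,\pi_x(y)$, the inner integral factorizes as
\[
\left(\int_\G\rho_\eps\right)\frac{1}{\sigma(S_\G)}\int_S|\langle a,\pi(y)\rangle|^p\,d\sigma(y),\qquad a=w(x)\in\R^{m_1}.
\]
By \eqref{P3} the first factor equals $1$, so the expression equals $\Psi(a):=\sigma(S_\G)^{-1}\int_S|\langle a,\pi(y)\rangle|^p\,d\sigma(y)$, which is independent of $\eps$. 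This $\Psi$ is $p$-homogeneous and continuous. It remains to show $\Psi(a)>0$ for $a\ne0$. Suppose instead $\Psi(a)=0$. Then $\langle a,\pi(y)\rangle=0$ for $\sigma$-a.e. $y$, and by the polar formula this would force $\langle a,\pi(h)\rangle=0$ for a.e. $h\in\G$. That is impossible, since this zero set is a hyperplane and has Lebesgue measure zero. By compactness, $\Psi(a)\ge c|a|^p$ with $c>0$. The displayed limit is then at least $c\|w\|^p_{L^p}$, so $w=0$ a.e., that is, $v=\nabla_\G f$ a.e.
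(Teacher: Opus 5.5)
Your proposal is correct and follows the paper's proof step for step. Both arguments bound $I^*_{\eps,p}(f)$ by the triangle inequality together with $|\pi_x(h)|_x\le C\,N(h)$ and then invoke \eqref{firstchari1bis}; both then compare with Theorem \ref{Taylormollgen} and use polar coordinates to show that the term involving $w=v-\nabla_\G f$ is independent of $\eps$. Your hyperplane argument giving $\Psi(a)>0$ for $a\neq 0$, which upgrades to $\Psi(a)\ge c|a|^p$, fills in the final step that the paper only asserts with ``this is only possible if $\nabla_\G f(x)=v(x)$''.
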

\begin{proof}
 By the triangular inequality, we can estimate
  \[\begin{split}
  &\int_\G\int_\G\frac{|f(x\cdot h)-f(x)|^p}{N(h)^p}\rho_\eps(h)\,dh\,dx\\ &\leq C\underbrace{\int_\G\int_{\G}\frac{|f(x\cdot h)-f(x)-\langle v(x), \pi_x(h)\rangle_x|^p}{N(h)^p}\rho_\eps(h)\,dh\,dx}_{(I)}+C\underbrace{\int_\G\int_{\G}\frac{|\langle v(x), \pi_x(h)\rangle_x|^p}{N(h)^p}\rho_\eps(h)\,dh\,dx}_{(II)}.
  \end{split}\]
  By assumption, quantity $(I)$ is bounded, at least for $\eps$ sufficiently small. The same holds for the term $(II)$, since 
  \[\frac{|\langle v(x), \pi_x(h)\rangle_x|^p}{N(h)^p}\leq |v(x)|_x^p\frac{|\pi_x(h)|_x^p}{N(h)^p}\leq C|v(x)|_x^p,\]
where the last estimate follows for instance by a comparison with the infinity distance $d_\infty$ (see \cite[Theorem 5.1]{FSSC03}) and the fact that any two homogeneous norms are equivalent (explicitly, $|\pi_x(h)|_x\leq d_\infty(h,0)\leq CN(h)$).
  Integrating over $\G$, this implies that $(II)\leq C\|v\|_{L^p(\G)}^p<+\infty$.
  Hence
\begin{equation}\label{limsup}\limsup_{\eps\to 0}\int_\G\int_{\G}\frac{|f(x\cdot h)-f(x)|^p}{N(h)^p}\rho_\eps(h)\,dh\,dx<+\infty.\end{equation}
 We can therefore apply Theorem \ref{firstcharacterization}, inferring from \eqref{limsup} that $f\in  W^{1,p}_\G(\G)$. 
  We finally show that $v(x)=\nabla_\G f(x)$ for a.e. $x\in\G$. This is a standard argument:
 \begin{small} \[\begin{split}
     \int_\G\int_{\G}\frac{|\langle \nabla_\G f(x)-v(x), \pi_x(h)\rangle_x|^p}{N(h)^p}\rho_\eps(h)\,dh\,dx &\leq C \int_\G\int_{\G}\frac{|f(x\cdot h)-f(x)-\langle v(x), \pi_x(h)\rangle_x|^p}{N(h)^p}\rho_\eps(h)\,dh\,dx\\  &\quad+ C\int_\G\int_{\G}\frac{|f(x\cdot h)-f(x)-\langle \nabla_\G f(x), \pi_x(h)\rangle_x|^p}{N(h)^p}\rho_\eps(h)\,dh\,dx.
  \end{split}\]
  \end{small}The right-hand side goes to 0, as $\eps\to 0$, by our assumption and by Theorem \ref{Taylormollgen}. The left-hand side, instead, does not depend on $\varepsilon$: this can be seen using homogeneity and polar coordinates (Theorem \ref{polarcoordinates}). More precisely, if $a(x):=\nabla_\G f(x)-v(x)$, we compute
\[\begin{split}\int_{\G}\frac{|\langle a(x), \pi_x(h)\rangle_x|^p}{N(h)^p}\rho_\eps(h)\,dh  &= \int_0^{+\infty}\int_{S_\G}\frac{|\langle a(x), \pi_x(\delta_r(y))\rangle_x|^p}{N(\delta_r(y))^p}\rho_\eps(\delta_r(y))\,r^{Q-1}d\sigma(y)\,dr\\ &=\int_0^{+\infty}\int_{S_\G}\frac{|\langle a(x), r\,\pi_x(y)\rangle_x|^p}{r^p}\widetilde \rho_\eps(r)\,r^{Q-1}d\sigma(y)\,dr\\ &=\int_0^{+\infty}\widetilde \rho_\eps(r)r^{Q-1}dr\int_{S_\G}|\langle a(x), \pi_x(y)\rangle_x|^p d\sigma(y)\\ &=\underbrace{\int_\G \rho_\eps(h)\,dh}_{1}\fint_{S_\G}|\langle a(x), \pi_x(y)\rangle_x|^p d\sigma(y)
,\end{split}\]
which does not depend on $\eps$. Hence we conclude that
\[\int_\G\int_{\G}\frac{|\langle \nabla_\G f(x)-v(x), \pi_x(h)\rangle_x|^p}{N(h)^p}\rho_\eps(h)\,dh\,dx=0,\]which also implies
\[\int_{\G}\frac{|\langle \nabla_\G f(x)-v(x), \pi_x(h)\rangle_x|^p}{N(h)^p}\rho_\eps(h)\,dh=0 \quad\text{ for a.e. }x\in\G.\]
This is only possible if $\nabla_\G f(x)=v(x)$ for a.e. $x\in \G$.
\end{proof}
Let us now consider the case of the first Sobolev space $W^{1,1}_\G(\G)$. In this case, the above argument would only tell us that $f\in BV_\G(\G)$. For this reason, we are going to apply the results of the previous section to show that the distributional horizontal gradient of $f$ is actually represented by a $L^1$ function. Instead, the argument proposed in \cite[Theorem 1.3]{spector} does not apply in our case, due to the possible lack of symmetry of the horizontal coordinates in a general Carnot group.

\begin{theorem}\label{converse2}
    Suppose $f\in L^1(\G)$ admits a 1st order $L^1$-Taylor approximation, that is there exists a section $v\in L^1(\G,H\G)$ such that
    \begin{equation}\label{assumptionL1}\lim_{\eps\to 0}\int_\G\int_{\G}\frac{|f(x\cdot h)-f(x)-\langle v(x), \pi_x(h)\rangle_x|}{N(h)}\rho_\eps(h)\,dh\,dx=0.\end{equation}
    Then $f\in W^{1,1}_\G(\G)$ and $v(x)=\nabla_\G f(x)$ for a.e. $x\in\G$. 
\end{theorem}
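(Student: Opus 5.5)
First I will show that $f\in BV_\G(\G)$. By the triangle inequality, exactly as in the proof of Theorem \ref{converse} with $p=1$, assumption \eqref{assumptionL1} together with the pointwise bound $|\langle v(x),\pi_x(h)\rangle_x|\le C|v(x)|_x N(h)$ gives $\limsup_{\eps\to0}I^*_{\eps,1}(f)<+\infty$. Then \eqref{firstchari1BVbis} in Theorem \ref{firstcharacterization} yields $f\in BV_\G(\G)$. It also yields $V_\eps(f)\rightharpoonup D_\G f$ in $\mathcal M(\G,\R^{m_1})$, and in particular $V_\eps(f)$ is well defined and belongs to $L^1(\G)$.

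The core step is to prove that $V_\eps(f)\to w$ in $L^1(\G)$ for an explicit $w\in L^1(\G,H\G)$ built from $v$. I split
\[
V_\eps(f)(x)=Q\int_\G\frac{f(x\cdot h)-f(x)-\langle v(x),\pi_x(h)\rangle_x}{N(h)}\nabla_\G N(h)\rho_\eps(h)\,dh+Q\int_\G\frac{\langle v(x),\pi_x(h)\rangle_x}{N(h)}\nabla_\G N(h)\rho_\eps(h)\,dh .
\]
The first term tends to $0$ in $L^1(\G)$, because its $L^1$ norm is at most $Q\|\nabla_\G N\|_{L^\infty}$ times the quantity in \eqref{assumptionL1}. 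For the second term I use polar coordinates (Theorem \ref{polarcoordinates}) together with the homogeneity relations $\pi_x(\delta_r y)=r\pi_x(y)$, $N(\delta_ry)=rN(y)$ and $\nabla_\G N(\delta_r y)=\nabla_\G N(y)$ (the horizontal gradient of a $1$-homogeneous function is $0$-homogeneous), as in the end of the proof of Theorem \ref{converse}. This shows the second term equals $A\,v(x)$, where $A$ is the constant $m_1\times m_1$ matrix
\[
A_{ij}=\frac{Q}{\sigma(S_\G)}\int_{S_\G}y_j\,X_iN(y)\,d\sigma(y),
\]
which does not depend on $\eps$ thanks to \eqref{P3}. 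Hence $V_\eps(f)\to Av$ strongly in $L^1(\G)$, and comparing with the weak convergence to $D_\G f$ gives $D_\G f=Av\,\mathcal L^n$. Thus $f\in W^{1,1}_\G(\G)$ and $\nabla_\G f=Av$.

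I expect the main obstacle to be identifying $A$ as the identity, since $N$ is neither rotation invariant nor symmetric in the horizontal coordinates. I would avoid computing $A$ directly and instead test the identity on a smooth function. Take $g\in C^\infty_c(\G)$ with $\nabla_\G g(0)=e_j$, for instance a cutoff times $y_j$. Applying Theorem \ref{Taylormollgen} and the same splitting to $g$ with $v=\nabla_\G g$ gives $V_\eps(g)\to A\nabla_\G g$ in $L^1(\G)$. Corollary \ref{approximation} gives $V_\eps(g)\to\nabla_\G g$. Hence $A\nabla_\G g=\nabla_\G g$ a.e., and evaluating near $0$ by continuity gives $Ae_j=e_j$. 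So $A=I$ and $v=\nabla_\G f$ a.e., which completes the proof.

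Two routine checks remain. The first is that the integral defining the second term converges absolutely, which follows from the bound $|\pi_x(h)|\lesssim N(h)$. The second is that $\nabla_\G N$ is genuinely $0$-homogeneous a.e., which follows from the fact that $\delta_r$ is an automorphism scaling $X_i$ by $r$.
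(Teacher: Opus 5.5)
Your proof is correct. The first part coincides with the paper's: the reduction to $BV_\G(\G)$ via \eqref{firstchari1BVbis}, the splitting of $V_\eps(f)$ into the remainder term plus the linear term, and the $L^1$ bound on the remainder. The two routes differ only in how the linear term is identified.

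The paper proves directly that this term equals $v(x)$ for every $\eps$:
\begin{itemize}
\item it replaces $\rho_\eps$ by the Lipschitz kernels $\rho_{\eps,j}$ of Step 3 in Theorem \ref{convolution};
\item it uses Lemma \ref{propertiesofKeps} (iv) to rewrite the term as $-\int_\G\langle v(x),\pi_x(h)\rangle_x\nabla_\G K_{\eps,j}(h)\,dh$;
\item it integrates by parts, using $X_ih_j=\delta_{ij}$ for $i,j\le m_1$, to get $v(x)\int_\G K_{\eps,j}$;
\item it lets $j\to\infty$ with $\|K_\eps\|_{L^1(\G)}=1$.
\end{itemize}
In your notation, $A=I$ is just the identity $\int_\G h_j\,X_iK_{\eps,j}(h)\,dh=-\delta_{ij}\int_\G K_{\eps,j}(h)\,dh$.

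You instead reduce the term to a constant matrix by polar coordinates. You then recognize $A=I$ by playing Theorem \ref{Taylormollgen} against Corollary \ref{approximation} on a smooth test function. This is not circular, since neither result uses Theorem \ref{converse2}. It is exactly the reasoning the paper uses later to obtain \eqref{formulaforv}.

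What each route buys:
\begin{itemize}
\item Your route avoids the kernel regularization and yields \eqref{formulaforv} as a by-product.
\item The cost is that it leans on two heavier theorems. It also requires evaluating $\nabla_\G N$ on the $\mathcal L^n$-null set $S_\G$. For that you need a representative that is constant along rays, for instance the one given by the Pansu differential on the dilation-invariant, full-measure set where $N$ is P-differentiable. This is slightly more than the a.e. $0$-homogeneity you mention; the paper glosses over the same point in its later computation.
\item The paper's integration by parts is more elementary and works at each fixed $\eps$. Combined with your polar computation, it would give a direct proof of \eqref{formulaforv}.
\end{itemize}
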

\begin{proof}
    Proceeding as in the proof of Theorem \ref{converse}, we get 
    \[\limsup_{\eps\to 0}\int_\G\int_\G\frac{|f(x\cdot h)-f(x)|}{N(h)}\rho_\eps(h)\,dh\,dx<+\infty.\]
    We can therefore apply Theorem \ref{firstcharacterization} (or, alternatively, \cite[Theorem 1.1]{lathi}) deducing that $f\in BV_\G(\G)$.
    \medskip\\
    Let us denote by $D_\G f$ the distributional horizontal gradient of $f\in BV_\G(\G)$ and let's show that 
    \begin{equation}\label{inteq}\int_\G \varphi\, dD_\G f=\int_\G v\varphi \, d \mathcal L^n\quad \text{ for all }\varphi\in C^0_c(\G).\end{equation}
    Recall from Section \ref{section4} that we can define the nonlocal horizontal gradients 
    \[V_\eps(f)(x):= Q\int_\G \frac{f(x\cdot h)-f(x)}{N(h)}\nabla_\G N(h)\rho_\eps(h)\,dh,\]
    which are well-defined since $f\in BV_\G(\G)$.
    It is now convenient to write
    \[V_\eps(f)(x)=\underbrace{Q\int_\G \frac{f(x\cdot h)-f(x)-\langle v, \pi_x(h)\rangle_x}{N(h)}\nabla_\G N(h)\rho_\eps(h)\,dh}_{I_\eps(x)}\, +\, \underbrace{Q\int_\G\frac{\langle v, \pi_x(h)\rangle_x}{N(h)}\nabla_\G N(h)\rho_\eps(h)\,dh.}_{II_\eps(x)}\]
    We prove that
    \begin{equation}\label{Iepsto0}
        I_\eps\to 0\quad\text{in } L^1(\G)
    \end{equation}
    and
    \begin{equation}\label{IIeps=v}
        II_\eps(x)=v(x)\quad \text{for a.e. }x\in \G.
    \end{equation}
    Indeed
    \[\begin{split}|I_\eps(x)|&\leq Q\||\nabla_\G N|\|_{L^\infty(\G)}\int_\G\frac{|f(x\cdot h)-f(x)-\langle v, \pi_x(h)\rangle_x|}{N(h)}\rho_\eps(h)\,dh
    .\end{split}\]
    Integrating the previous inequality on $\G$ and using \eqref{assumptionL1} we get \eqref{Iepsto0}.
    Let's now prove \eqref{IIeps=v}. 
    For every $\eps>0$, let $(\rho_{\eps,j})_j\subset \rm{Lip}_{\G,c}(\R^n\setminus\{0\})$ be a sequence of maps satisfying \eqref{P1}, \eqref{P2} and such that $\rho_{\eps,j}\to \rho_\eps$ in $L^1(\G)$ as $j\to+\infty$. This sequence can be constructed as in Step 3 of Theorem \ref{convolution}. Then, by Lemma \ref{propertiesofKeps} (iv), arguing as in Step 1 of Theorem \ref{convolution}, it follows that
    \[Q\int_\G\frac{\langle v,\pi_x(h)\rangle_x}{N(h)}\nabla_\G N(h)\,\rho_{\eps,j}(h)\,dh=-\int_\G\langle v,\pi_x(h)\rangle_x \nabla_\G K_{\eps,j}(h)\,dh=v(x)\int_\G K_{\eps,j}(h)\,dh,\]
    where $K_{\eps,j}$ are associated to $\rho_{\eps,j}$ as in \eqref{Keps}. Notice that, as observed in the previous proof, for fixed $x$ the quantity $\frac{\langle v,\pi_x(h)\rangle_x} {N(h)}\nabla_\G N(h)$ is uniformly bounded in $h$. Since $\rho_{\eps,j}\to\rho_\eps$ in $L^1(\G)$ and also $K_{\eps,j}\to K_\eps$ in $L^1(\G)$ (by Step 3 in Theorem \ref{convolution}), 
     we can pass to the limit as $j\to\infty$ and we get
    \[Q\int_\G\frac{\langle v,\pi_x(h)\rangle_x}{N(h)}\nabla_\G N(h)\,\rho_{\eps}(h)\,dh=v(x)\int_\G K_{\eps}(h)\,dh=v(x),\]
    where in the last equality we use (i) in Lemma \ref{propertiesofKeps}, showing \eqref{IIeps=v}.
    \\
    Combining \eqref{Iepsto0} and \eqref{IIeps=v}, we get that $V_\eps(f)\to v$ in $L^1(\G)$. On the other hand, by Corollary \ref{approximation}, it follows that $V_\eps(f)\rightharpoonup D_\G f$ in $\mathcal M(\G;\R^{m_1}).$
    Therefore, for every $\varphi\in C^0_c(\G)$,
    \[\int_\G V_\eps(f)\varphi\,d\mathcal L^n\to\int_\G v\varphi\,d\mathcal L^n\qquad \text{and}\qquad \int_\G V_\eps(f)\varphi\,d\mathcal L^n\to\int_\G\varphi \,dD_\G f,\]
    deducing \eqref{inteq}. Hence $D_\G f=v \,d\mathcal L^n$. In particular there exists $\nabla_\G f=v$ in weak sense and $f\in W^{1,1}_\G(\G),$ completing the proof.
\end{proof}\begin{remark}\label{approxtodiff}
    Combining Theorem \ref{converse} and Theorem \ref{converse2} with Theorem \ref{lp*differentiability2}, we deduce that the $L^p$-Taylor approximation (even with general mollifiers) implies $L^p$ differentiability (actually $L^{p^\ast}$ differentiability) almost everywhere. This is not straightforward, since in general $L^1$ convergence does not imply pointwise convergence a.e., without passing to a subsequence. 
\end{remark}
Notice that the first part of the proof of Theorem \ref{converse2} actually shows the following result for functions in $BV_\G(\G)$.
\begin{theorem}\label{converse2bv} Let $f\in L^1(\G)$. Suppose there exists a section $v\in L^1(\G,H\G)$ such that
\begin{equation}\label{assumptionL1BV}\limsup_{\eps\to 0}\int_\G\int_{\G}\frac{|f(x\cdot h)-f(x)-\langle v(x), \pi_x(h)\rangle_x|}{N(h)}\rho_\eps(h)\,dh\,dx<+\infty.\end{equation}
    Then $f\in BV_\G(\G)$. 
\end{theorem}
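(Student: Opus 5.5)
The plan is to reduce the claim to the characterization \eqref{firstchari1BVbis} in Theorem \ref{firstcharacterization}, exactly as in the first part of the proofs of Theorems \ref{converse} and \ref{converse2}. By the triangle inequality, for every $x,h\in\G$ we have
\[
|f(x\cdot h)-f(x)|\le |f(x\cdot h)-f(x)-\langle v(x),\pi_x(h)\rangle_x|+|\langle v(x),\pi_x(h)\rangle_x|.
\]
We divide this by $N(h)$, multiply by $\rho_\eps(h)$ and integrate in $h$ and $x$. This gives
\[
I^*_{\eps,1}(f)\le \int_\G\int_\G\frac{|f(x\cdot h)-f(x)-\langle v(x),\pi_x(h)\rangle_x|}{N(h)}\rho_\eps(h)\,dh\,dx+\int_\G\int_\G\frac{|\langle v(x),\pi_x(h)\rangle_x|}{N(h)}\rho_\eps(h)\,dh\,dx.
\]

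The first term has finite $\limsup$ as $\eps\to0$ by the hypothesis \eqref{assumptionL1BV}. For the second term we use the pointwise bound already exploited in the proof of Theorem \ref{converse}:
\[
|\langle v(x),\pi_x(h)\rangle_x|\le |v(x)|_x\,|\pi_x(h)|_x\le C\,|v(x)|_x\,N(h).
\]
This follows by comparing with $d_\infty$ and using the equivalence of homogeneous norms. Together with \eqref{P3}, it bounds the second term by $C\|v\|_{L^1(\G)}$, uniformly in $\eps$. Hence $\limsup_{\eps\to0}I^*_{\eps,1}(f)<+\infty$.

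Since $f\in L^1(\G)$, the equivalence \eqref{firstchari1BVbis} of Theorem \ref{firstcharacterization}(ii) then yields $f\in BV_\G(\G)$. As an alternative route, one could use \eqref{comparIIstarfor} to bound $I_{\eps,1}(f)$ and then invoke \eqref{firstchari1BV}.

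There is no real obstacle here. The only points needing care are that the constant in $|\pi_x(h)|_x\le CN(h)$ is independent of $x$, which holds because $\pi_x(h)$ has the coordinates $(h_1,\dots,h_{m_1})$ in the orthonormal frame, and that all the integrals are of nonnegative functions, so Tonelli's theorem justifies splitting them without any a priori finiteness assumption.
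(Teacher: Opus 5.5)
Your proposal is correct and is the paper's own argument. The paper proves this statement by pointing to the first part of the proof of Theorem \ref{converse2}, which does what you do: it splits off $\langle v(x),\pi_x(h)\rangle_x$ by the triangle inequality, bounds that term by $C\|v\|_{L^1(\G)}$ using $|\pi_x(h)|_x\le CN(h)$ and \eqref{P3}, and then applies Theorem \ref{firstcharacterization}(ii).
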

We can now summarize the main results of this section, finally deriving Theorem \ref{secondcharBVSobfunct}.
\begin{proof}[Proof of Theorem \ref{secondcharBVSobfunct}]
By collecting all the previous Theorems \ref{TaylorBV}, \ref{Taylormollgen}, \ref{converse}, \ref{converse2}, \ref{converse2bv} together the proof follows.
\end{proof}
\noindent\textbf{Some consequences of the $L^p$-Taylor approximation.}\vspace{0.1 cm}\\
We close this section by pointing out some interesting consequences of Theorem \ref{Taylormollgen}.
Let $f\in W^{1,p}_\G(\G)$ and let $(\rho_\eps)_\eps$ be a family of mollifiers satisfying $\eqref{P1}\div\eqref{P4}$. By Theorem \ref{Taylormollgen}, we know that
\begin{equation}\label{taylorapproximation2}
  \lim_{\eps\to 0}\int_\G\int_\G\frac{|f(x\cdot h)-f(x)-\langle\nabla_\G f(x), \pi_x(h)\rangle_x|^p}{N(h)^p}\rho_\eps(h)\,dh\,dx=0.
  \end{equation}
By Jensen's inequality, this implies that\
\begin{equation*}
\begin{aligned}&\int_\G\Bigg|\underbrace{Q\int_\G\frac{f(x\cdot h)-f(x)}{N(h)}\nabla_\G N(h)\rho_\eps(h)\,dh}_{A_\eps(x)}-\underbrace{Q\int_\G\frac{\langle\nabla_\G f(x), \pi_x(h)\rangle_x}{N(h)}\nabla_\G N(h)\rho_\eps(h)\,dh}_{B_\eps(x)}\Bigg|^pdx\\ &\leq Q\int_\G\int_\G\frac{|f(x\cdot h)-f(x)-\langle\nabla_\G f(x), \pi_x(h)\rangle_x|^p}{N(h)^p}\|\nabla_\G N\|_{L^\infty(\G)}^p\,\rho_\eps(h)\,dh\,dx\stackrel{\eps\to 0}\longrightarrow 0.\end{aligned}\end{equation*}
Notice that $A_\eps$ coincides with the nonlocal horizontal gradients $V_\eps(f)$. Moreover, observe that $B_\eps(x)$ actually does not depend on $\eps$, by integrating in polar coordinates as in Theorem \ref{converse}:
\[\begin{split}B_\eps(x)&:=Q\int_\G\frac{\langle\nabla_\G f(x), \pi_x(h)\rangle_x}{N(h)}\nabla_\G N(h)\rho_\eps(h)\,dh\\ &=Q\int_0^{+\infty}\int_{S_\G}\frac{\langle\nabla_\G f(x), \pi_x(\delta_r(y))\rangle_x}{N(\delta_r(y))}\nabla_\G N(\delta_r(y))\,\rho_\eps(\delta_r(y))\,r^{Q-1}\,d\sigma(y)\,dr\\ &=Q\int_0^{+\infty}\int_{S_\G}{\langle\nabla_\G f(x), \pi_x(y)\rangle_x}\,\nabla_\G N(y)\,\tilde\rho_\eps(r)\,r^{Q-1}\,d\sigma(y)\,dr\\ &=Q \int_0^{+\infty}\tilde\rho_\eps(r)\,r^{Q-1}\,dr \int_{S_\G}{\langle\nabla_\G f(x), \pi_x(y)\rangle_x}\,\nabla_\G N(y)\,\,d\sigma(y)\\ &=Q \fint_{S_\G}{\langle\nabla_\G f(x), \pi_x(y)\rangle_x}\,\nabla_\G N(y)\,\,d\sigma(y)=: B(x).\end{split}\]
Summarizing, the $L^p$-Taylor approximation implies that
\[V_\eps(f)\stackrel{\eps\to 0}\rightarrow B\quad \text{in }L^p(\G).\]
On the other hand, by Corollary \ref{approximation}, we already know that
\[V_\eps(f)\stackrel{\eps\to 0}\rightarrow\nabla_\G f\quad \text{in }L^p(\G).\]
Therefore,
\begin{equation*}\nabla_\G f(x)=Q \fint_{S_\G}{\langle\nabla_\G f(x), \pi_x(y)\rangle_x}\,\nabla_\G N(y)\,\,d\sigma(y)\quad\text{for a.e. }x\in\G.\end{equation*}
Taking $f$ to be locally linear, one gets the following representation formula for a vector $v\in \R^{m_1}$:
\begin{equation}\label{formulaforv}v=Q \fint_{S_\G}{\langle v, \pi_x(y)\rangle_x}\,\nabla_\G N(y)\,\,d\sigma(y).\end{equation}
In the Euclidean case, this expression takes the form
\[v=n \fint_{\mathbb S^{n-1}}{\langle v, y\rangle}\,y\,d\mathcal H^{n-1}(y)\quad\text{ for all $v\in\R^n$},\]
which is a well-known identity following from the high symmetry of $\mathbb S^{n-1}$. 
In the case of a Carnot group endowed with a general homogeneous norm, the same proof does not work anymore and we are not aware of similar reconstruction formulae present in the existing literature 
\\
Even if Theorem \ref{Taylormollgen} is independent of the results of section 3, the above computations show that the $L^p$-Taylor approximation actually implies the convergence of the nonlocal gradients $V_\eps(f)$ to a limit value $B$. If one assumes \eqref{formulaforv}, then this limit coincides with $\nabla_\G f$, providing in this way an alternative proof of Corollary \ref{approximation}. 
\\\\ 
The $L^p$-Taylor approximation directly implies convergence results also for the nonlocal energies $I_{\eps,p}(f)$ and $I^*_{\eps,p}(f)$.
More precisely, from \eqref{taylorapproximation2} and the triangular inequality, computations analogous to the ones above yield for instance
\begin{equation}\label{Iconvergence}\begin{aligned}I^\ast_{\eps,p}(f)=\int_{\G}\int_{\G}\frac{|f(x\cdot h)-f(x)|^p}{N(h)^p}\,\rho_\eps(h)\,dh\,dx\stackrel{\eps\to 0}\longrightarrow&\int_\G \fint_{S_\G}|\langle \nabla_\G f(x),\pi_x(y)\rangle_x|^p\,d\sigma(y)\,dx\\ &=\frac{p+Q}{\sigma(S_\G)}\int_\G\int_{B(0,1)}|\langle \nabla_\G f(x),\pi_x(y)\rangle_x|^p\,dydx,\end{aligned}\end{equation}
where the last equality follows again by an  integration in polar coordinates.\\
This asymptotic relation provides several corollaries: in addition to the classical BBM formula on $\R^n$ \cite{BBM}, we recover for instance the corresponding result by Barbieri for rotationally invariant norms on Carnot groups \cite{barbieri} 
and the limiting behaviour of the anisotropic Gagliardo seminorms established by Ludwig in \cite{ludwig}. Our approach offers an alternative and independent way to derive these results as a direct consequence of the $L^p$-Taylor approximation. 
\begin{corollary}\label{barbiericorollary}\cite[Theorem 3.6]{barbieri}
    Let $f\in W^{1,p}_\G(\G)$ and assume $N$ to be invariant under horizontal rotations. Then 
    \[\lim_{\eps \to 0}\int_\G\int_\G\frac{|f(x\cdot h)-f(x)|^p}{N(h)^p}\rho_\eps(h)\,dh\,dx=C\int_\G|\nabla_\G f(x)|^p\,dx,\]
    with 
\[C:=\fint_{S_\G}\left|\left\langle v,\pi_x(y)\right\rangle_x\right|^p\,d\sigma(y)=\frac{p+Q}{\sigma(S_\G)}\int_{B(1)}|\langle v,\pi_x(y)\rangle_x|^pdy,\]
being $v$ any unit vector in $\R^{m_1}$.
\end{corollary}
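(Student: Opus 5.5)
The plan is to derive the corollary from the convergence \eqref{Iconvergence}, which follows from Theorem \ref{Taylormollgen}. Concretely, it gives
\[
\lim_{\eps\to0}I^*_{\eps,p}(f)=\int_\G \fint_{S_\G}|\langle \nabla_\G f(x),\pi_x(y)\rangle_x|^p\,d\sigma(y)\,dx .
\]
I will first justify this limit. Set
\[
\Delta(x,h)=\frac{f(x\cdot h)-f(x)}{N(h)},\qquad L(x,h)=\frac{\langle\nabla_\G f(x),\pi_x(h)\rangle_x}{N(h)},
\]
and regard both as elements of $L^p(\G\times\G,\,dx\,\rho_\eps(h)dh)$. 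Minkowski's inequality gives
\[
\bigl|\,\|\Delta\|-\|L\|\,\bigr|\le\|\Delta-L\|\to0
\]
by \eqref{taylorapproximationmoll}. The norm $\|L\|^p$ is independent of $\eps$, by the polar coordinate computation already carried out in the proof of Theorem \ref{converse} (homogeneity of $\pi_x$ and $N$, together with \eqref{P3}). This yields the displayed limit.

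The remaining point is to identify the inner integral as $C|\nabla_\G f(x)|^p$ with $C$ independent of $x$ and of the direction. The integral $w\mapsto \fint_{S_\G}|\langle w,\pi_x(y)\rangle_x|^p\,d\sigma(y)$ is positively $p$-homogeneous in $w\in\R^{m_1}$, since $\langle w,\pi_x(y)\rangle_x=\sum_i w_iy_i$ does not depend on $x$. It therefore suffices to show that it takes the same value at all unit vectors. Given unit vectors $v,w$, pick $A\in O(m_1)$ with $A^Tw=v$, and let $\varphi$ be the associated horizontal rotation. Then
\[
\langle w,\pi(\varphi(y))\rangle=\langle w,Ay'\rangle=\langle v,y'\rangle,
\]
where $y'$ denotes the horizontal part of $y$. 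So I need the change of variables $y\mapsto\varphi(y)$ to preserve $\sigma$ on $S_\G$.

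This invariance of $\sigma$ is the main obstacle, and I will handle it through the ball formulation on the right-hand side of the stated constant, thereby avoiding a direct argument on the sphere. The map $\varphi$ is linear with $|\det|=1$, so it preserves $\mathcal L^n$. Since $N$ is invariant under horizontal rotations, $\varphi(B(1))=B(1)$. Hence
\[
\int_{B(1)}|\langle w,\pi(y)\rangle|^p\,dy=\int_{B(1)}|\langle v,\pi(y)\rangle|^p\,dy.
\]
The polar coordinate identity used in \eqref{Iconvergence}, which amounts to integrating the $p$-homogeneous function $r^p$ against $r^{Q-1}dr$ over $(0,1)$, converts this equality into one for the sphere averages, with the factor $(p+Q)/\sigma(S_\G)$. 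Applying this with $w=\nabla_\G f(x)/|\nabla_\G f(x)|$ whenever $\nabla_\G f(x)\ne0$, and noting that both sides vanish otherwise, gives
\[
\fint_{S_\G}|\langle\nabla_\G f(x),\pi_x(y)\rangle|^p\,d\sigma=C|\nabla_\G f(x)|^p .
\]
Integrating in $x$ then yields the claimed identity together with both expressions for $C$.
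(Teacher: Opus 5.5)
Your proposal is correct and follows the paper's route. You pass to the limit via \eqref{Iconvergence}, which you justify cleanly with Minkowski's inequality in $L^p(dx\,\rho_\eps(h)\,dh)$ and the $\eps$-independence of the linear term, and you then use rotational invariance to reduce the sphere average to a constant times $|\nabla_\G f(x)|^p$. Your detour through the ball integral, using only that $\varphi$ is linear with $|\det\varphi|=1$ and that $\varphi(B(1))=B(1)$, is a tidy way to supply the invariance of $\sigma$ under horizontal rotations, which the paper invokes without comment.
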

\begin{proof}
 By \eqref{Iconvergence}, using the rotational invariance property, we immediately get
 \[\begin{split}\int_{\G}\int_{\G}\frac{|f(x\cdot h)-f(x)|^p}{N(h)^p}\,\rho_\eps(h)\,dh\,dx&\stackrel{\eps\to 0}\longrightarrow\int_\G \fint_{S_\G}|\langle \nabla_\G f(x),\pi_x(y)\rangle_x|^p\,d\sigma(y)\,dx\\ &=\int_\G |\nabla_\G f(x)|^p\fint_{S_\G}\left|\left\langle \frac{\nabla_\G f(x)}{|\nabla_\G f(x)|},\pi_x(y)\right\rangle_x\right|^p\,d\sigma(y)\,dx\\ &=C \int_\G|\nabla_\G f(x)|^p dx,\end{split}\]
 for the constant $C$ as in the statement of the corollary.
\end{proof}
The convergence result for anisotropic Gagliardo seminorms established by Ludwig \cite{ludwig} can be formulated in the setting of general Carnot groups as follows.
\begin{corollary}\label{thmLudwig}
Let $f\in W^{1,p}_\G(\G)$. Then
\begin{equation}\label{fractional}\lim_{\eps\to0} \eps\int_\G\int_\G\frac{|f(x\cdot h)-f(x)|^p}{N(h)^{Q+p-\eps p}}dhdx= \frac{p+Q}{p}\int_\G\int_{B(0,1)}|\langle \nabla_\G f(x),\pi_x(y)\rangle_x|^p\,dydx.\end{equation}
\end{corollary}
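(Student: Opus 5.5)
Our plan is to recognize the left-hand side of \eqref{fractional} as $I^\ast_{\eps,p}(f)$ for a specific family of mollifiers from Example \ref{Examples}, and then apply \eqref{Iconvergence}. The kernel $\eps N(h)^{-(Q+p-\eps p)}$ is not integrable at infinity when $\eps p<p$, so we first split the $h$-integral into $B(0,1)$ and $\G\setminus B(0,1)$.

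\emph{Outside the unit ball.} For $N(h)\ge 1$ we use $|f(x\cdot h)-f(x)|^p\le 2^{p-1}(|f(x\cdot h)|^p+|f(x)|^p)$ together with the right-invariance of Lebesgue measure. This gives
\[
\int_\G\int_{\G\setminus B(0,1)}\frac{|f(x\cdot h)-f(x)|^p}{N(h)^{Q+p-\eps p}}\,dh\,dx\le 2^p\|f\|_{L^p(\G)}^p\int_{\G\setminus B(0,1)}N(h)^{-Q-p+\eps p}\,dh .
\]
By \eqref{iwrtradfctpoco}, the last integral equals $\sigma(S_\G)/(p-\eps p)$, which stays bounded for $\eps<1/2$. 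Multiplying by $\eps$, this contribution tends to $0$.

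\emph{Inside the unit ball.} Take $R=1$ in Example \ref{Examples} and set
\[
\rho_\eps(h)=\frac{\eps p\,\chi_{B(0,1)}(h)}{\sigma(S_\G)\,N(h)^{Q-\eps p}}.
\]
We check via \eqref{tildeP3} and \eqref{tildeP4} that this family satisfies \eqref{P1}$\div$\eqref{P4}:
\begin{itemize}
\item $\int_0^1 \eps p\,r^{\eps p-1}\,dr=1$;
\item for $\delta<1$, $\int_\delta^1 \eps p\,r^{\eps p-1}\,dr=1-\delta^{\eps p}\to 0$.
\end{itemize}
The inner contribution multiplied by $\eps$ is then exactly $\frac{\sigma(S_\G)}{p}\,I^\ast_{\eps,p}(f)$ computed with this $\rho_\eps$.

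\emph{Passing to the limit.} By \eqref{Iconvergence}, which follows from Theorem \ref{Taylormollgen}, we have
\[
I^\ast_{\eps,p}(f)\to\frac{p+Q}{\sigma(S_\G)}\int_\G\int_{B(0,1)}|\langle\nabla_\G f(x),\pi_x(y)\rangle_x|^p\,dy\,dx .
\]
Multiplying by $\sigma(S_\G)/p$ gives exactly the right-hand side of \eqref{fractional}.

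The only delicate point is the justification of \eqref{Iconvergence} itself, which the paper sketches: it combines the $L^p$-Taylor approximation with the $\eps$-independence of the polar-coordinate computation, as in the proof of Theorem \ref{converse}. For $p>1$ the comparison $\big|\|a\|_{L^p}-\|b\|_{L^p}\big|\le\|a-b\|_{L^p}$ in the product $L^p(dx\,\rho_\eps dh)$ space suffices. Here $a$ is the difference quotient and $b$ is the linear term. The norm of $b$ is $\eps$-independent and equals $\big(\int_\G\fint_{S_\G}|\langle\nabla_\G f,\pi_x(y)\rangle|^p\,d\sigma\,dx\big)^{1/p}$, while $\|a-b\|\to0$. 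This Minkowski argument also works verbatim for $p=1$.
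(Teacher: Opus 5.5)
Your proof is correct and follows the paper's argument. The paper also discards the contribution of $\G\setminus B(R)$ through the bound $\eps\, 2^{p}\|f\|_{L^p(\G)}^p\sigma(S_\G)R^{\eps p-p}/(p-\eps p)\to 0$, then identifies the remaining term with $\frac{\sigma(S_\G)}{p}I^\ast_{\eps,p}(f)$ for the kernel of Example~\ref{Examples} and concludes via \eqref{Iconvergence}. Two small additions on your side: taking $R=1$ makes that identification exact (for general $R$ there is an extra factor $R^{\eps p}\to 1$), and your Minkowski argument spells out the justification of \eqref{Iconvergence}, which the paper only sketches.
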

\begin{proof}
Let $R>0$ be fixed. Then simple computations give
\[\begin{split} \eps\int_{\G}\int_{B^c(R)}\frac{|f(x\cdot h)-f(x)|^p}{N(h)^{Q+p-\eps p}}dhdx&\leq \eps 2^{p-1}\int_{\G}\int_{B^c(R)}\frac{|f(x\cdot h)|^p+|f(x)|^p}{N(h)^{Q+p-\eps p}}dhdx\\&= \eps 2^{p}\|f\|_{L^p(\G)}^p\int_{B^c(R)}\frac{1}{N(h)^{Q+p-\eps p}}dh\\ &=\eps 2^p\|f\|_{L^p(\G)}^p\sigma(S_\G)\int_R^{+\infty}\rho^{\eps p-p-1}d\rho\\
&=\eps 2^p\|f\|_{L^p(\G)}^p\sigma(S_\G)\frac{R^{\eps p-p}}{p-\eps p}\stackrel{\eps \to 0}\longrightarrow 0.\end{split}\]
Hence \eqref{fractional} is equivalent to compute 
\[\lim_{\eps\to0} \eps\int_\G\int_{B(R)}\frac{|f(x\cdot h)-f(x)|^p}{N(h)^{Q+p-\eps p}}dhdx.\]
Defining \[\rho_\eps(h):=\frac{\eps p\,\chi_{B(0,R)(h)}}{N(h)^{Q-\eps p}\sigma(S_\G)R^{\eps p}},\]
it is easy to verify that $(\rho_\eps)_\eps$ satisfy assumptions $\eqref{P1}\div\eqref{P4}$ (see Example \ref{Examples}). Hence we can apply \eqref{Iconvergence} and get
\begin{align*}
    \lim_{\eps\to0} \eps\int_\G\int_\G\frac{|f(x\cdot h)-f(x)|^p}{N(h)^{Q+p-\eps p}}dhdx&=\frac{\sigma(S_\G)}{p}\lim_{\eps \to 0}\int_\G\int_\G\frac{|f(x\cdot h)-f(x)|^p}{N(h)^p}\rho_\eps(h)\,dhdx\\ &=\frac{\sigma(S_\G)}{p}\frac{p+Q}{\sigma(S_\G)}\int_{B(0,1)}|\langle \nabla_\G f(x),\pi_x(y)\rangle_x|^p\,dy.\\
    &=\frac{p+Q}{p}\int_\G\int_{B(0,1)}|\langle \nabla_\G f(x),\pi_x(y)\rangle_x|^p\,dydx\qedhere
\end{align*}\end{proof}
Corollary \ref{thmLudwig} is a generalization of \cite[Theorem 1]{ludwig}, which instead deals with the case $\G=(\R^n,+)$ equipped with an arbitrary (anisotropic) norm. Notice that our approach works for arbitrary Carnot groups and it does not require the assumption of $f$ to be compactly supported (which is instead assumed in \cite{ludwig}). 

    \bibliographystyle{abbrv}
\bibliography{biblio}
\end{document}